\newcommand{\arxiv}[1]{{\tt
    \href{http://www.arXiv.org/abs/#1}{arXiv:#1}}}
\theoremstyle{plain}
\newtheorem{thm}{Theorem}[section]
\newtheorem{prop}[thm]{Proposition}
\newtheorem{lemma}[thm]{Lemma}
\newtheorem{cor}[thm]{Corollary}
\theoremstyle{definition}
\newtheorem{definition}[thm]{Definition}
\theoremstyle{remark}
\newtheorem{remark}[thm]{Remark}
\newtheorem{example}[thm]{Example}
\newtheorem*{ack}{Acknowledgements}
\newcommand{\Aut}{\mathrm{Aut}}
\DeclareFontFamily{U}{rsf}{}
\DeclareFontShape{U}{rsf}{m}{n}{<5> <6> rsfs5 <7> <8> <9> rsfs7 <10-> rsfs10}{}
\DeclareMathAlphabet\Scr{U}{rsf}{m}{n}
\def\R{\mathbb{R}}
\def\dd{\mathrm{d}}
\def\cQ{\mathcal{Q}}
\def\Diff{\mathrm{Diff}}
\def\Id{\mathrm{Id}}
\def\frw{\mathfrak{w}}
\newcommand{\be}{\begin{equation*}}
\newcommand{\ee}{\end{equation*}}
\newcommand{\ben}{\begin{equation}}
\newcommand{\een}{\end{equation}}
\newcommand{\beqa}{\begin{eqnarray*}}
	\newcommand{\eeqa}{\end{eqnarray*}}
\newcommand{\beqan}{\begin{eqnarray}}
\newcommand{\eeqan}{\end{eqnarray}}
\newcommand{\Tr}{\mathrm{Tr}}
\def\cC{{\mathcal C}}
\newcommand{\Sol}{\mathrm{Sol}}
\newcommand{\Conf}{\mathrm{Conf}}
\def\Spin{\mathrm{Spin}}
\def\Spin{\mathrm{Spin}}
\def\U{\mathrm{U}}
\def\cA{\mathcal{A}}
\def\cI{\mathcal{I}}
\def\cP{\mathcal{P}}
\def\cG{\mathcal{G}}
\def\cP{\mathcal{P}}
\def\cF{\mathcal{F}}
\def\cC{\mathcal{C}}
\def\G_2{\mathrm{G_2}}
\def\cS{\mathcal{S}}
\def\Aut{\mathrm{Aut}}
\def\G{\mathrm{G}}
\def\R{\mathbb{R}}
\def\cL{\mathcal{L}}
\def\dd{\mathrm{d}}
\def\Met{\mathrm{Met}}
\def\Ric{\mathrm{Ric}}
\def\ddtau{\frac{\mathrm{d}}{\mathrm{d\tau}}}
\setlist[itemize]{leftmargin=*}
\newcolumntype{P}[1]{>{\centering\arraybackslash}p{#1}}
\begin{document}%Draft:\,\today

\title[The Cauchy problem for gradient generalized Ricci solitons on a bundle gerbe]{The Cauchy problem for gradient generalized Ricci solitons on a bundle gerbe}

\author[S. Bunk]{Severin Bunk} \address{School of Physics, Astronomy and Mathematics, University of Hertfordshire, United Kingdom}
\email{s.bunk@herts.ac.uk}

\author[Miguel Pino]{Miguel Pino} 

\author[C. S. Shahbazi]{C. S. Shahbazi} \address{Departamento de Matem\'aticas, Universidad UNED - Madrid, Reino de Espa\~na}
\email{mpino185@alumno.uned.es}
\email{cshahbazi@mat.uned.es}

%\thanks{2010 MSC. Primary: 53C27. Secondary: 53C50.}  \keywords{ }

\begin{abstract}
We prove well-posedness of the analytic Cauchy problem for gradient generalized Ricci solitons on an abelian bundle gerbe and solve the initial data equations on every compact Riemann surface. Along the way, we provide a novel characterization of the self-similar solutions of the generalized Ricci flow by means of families of automorphisms of the underlying abelian bundle gerbe covering families of diffeomorphisms isotopic to the identity.
\end{abstract}
 
\maketitle

%\setcounter{tocdepth}{1} %doesn't display subsections in TOC 
%\tableofcontents

% % % % % % % % % % % % % % % % % % % % % % % % % % % % % % % % % % % % % % 
% % % % % % % % % % % % % % % % % % % % % % % % % % % % % % % % % % % % % % 

\section{Introduction}

% % % % % % % % % % % % % % % % % % % % % % % % % % % % % % % % % % % % % % 
% % % % % % % % % % % % % % % % % % % % % % % % % % % % % % % % % % % % % % 

\noindent
The generalized Ricci flow \cite{GFStreetsBook,OliynykSuneetaWoolgar} is a natural generalization of the celebrated Ricci flow \cite{Hamilton}. This evolution flow, which originates in the string theory literature as the renormalization group flow of the bosonic string \cite{Polchinski}, describes the evolution of a Riemannian metric $g$ coupled to a \emph{b-field}. Its self-similar solutions, known as generalized Ricci solitons, are expected to foster remarkable applications in the classification and uniformization of compact complex surfaces \cite{Streets1,Streets2,Streets:2012zz}. The main goal of this article is to study the Riemannian Cauchy problem for gradient generalized Ricci solitons, proving its well-posedness for analytic initial data. \medskip

\noindent
The generalized Ricci flow is typically studied either on the bare underlying manifold, as a flow for metrics and closed three-forms, or on an exact Courant algebroid, as a flow of generalized metrics and divergence operators \cite{GarciaFernandez,GFStreetsBook}. Here, motivated by supergravity, we adopt a different perspective: by \emph{Dirac quantization}, the three-form flux occurring in the generalized Ricci flow must be integral, and hence can be interpreted as the curvature of a curving on a bundle gerbe equipped with a connective structure \cite{Murray,BehrendXu}. Consequently, the theory of $\U(1)$ bundle gerbes can be used to give a gauge-theoretic global geometric interpretation of the generalized Ricci flow and, in particular, of the notion of \emph{b-field} as a curving on a bundle gerbe. This brings the generalized Ricci flow into the realm of \emph{higher geometry} and in line with the ongoing modern development of supergravity and string theory as a higher gauge theory \cite{BaezSchreiber,Borstenetall,Tellez}. \medskip

\noindent 
Considering the generalized Ricci flow as an evolution flow on a bundle gerbe with a fixed connective structure, we define its self-similar flows as those evolving by \emph{families} of automorphisms, namely stable isomorphisms \cite{Waldorf0,Waldorf} covering possibly non-trivial diffeomorphisms of the underlying manifold. This is delicate, since it requires a sensible notion of \emph{smooth} family of automorphisms at a level of generality that we have not found in the literature. Recently, an exponential map for a large class of 2-groups, including the 2-group of stable isomorphisms of a bundle gerbe, was obtained in \cite[Theorem 1.4]{Tellez}. However, we cannot directly apply this result to our purposes, as we need to consider automorphisms that cover diffeomorphisms of the underlying manifold. Instead, we propose an explicit natural definition of a smooth family of automorphisms under a technical condition on the submersion underlying the given bundle gerbe, which builds on the general action that we obtain in Theorem \ref{thm:action}. This results in a novel computation of the generalized Ricci soliton system via the action of a smooth family of higher isomorphisms. This should be compared with the computation of the generalized Ricci soliton system carried out in \cite{GFStreetsBook} via families of automorphisms of an exact Courant algebroid. Focusing on \emph{gradient} generalized Ricci solitons, a streamlined version of our main result is the following.

\begin{thm}
The Cauchy problem for the gradient generalized Ricci soliton system on a $\U(1)$ bundle gerbe is well-posed for analytic initial data.
\end{thm}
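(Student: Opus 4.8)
The plan is to reduce the soliton system to a determined system in Cauchy--Kovalevskaya normal form and then invoke analyticity. First, using Theorem \ref{thm:action} together with the characterization of self-similar solutions by smooth families of automorphisms of the gerbe covering diffeomorphisms isotopic to the identity, I would write the gradient generalized Ricci soliton system explicitly as a PDE system on the base manifold $M$ for a triple $(g,b,f)$: here $g$ is a Riemannian metric, $b$ a local curving potential whose curvature is $H = H_0 + \dd b$ for a fixed integral background $H_0$, and $f \in \cC^\infty(M,\R)$. After fixing the normalization constant it takes the schematic form $\Ric_g - \tfrac14 H^2 + \nabla^2 f = 0$ together with the curving equation $\delta_g H = \iota_{\grad f} H$ (the closedness $\dd H = 0$ being automatic on a gerbe). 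This is a quasilinear second-order system, covariant under diffeomorphisms and under gerbe gauge transformations.

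Next I would fix a Cauchy hypersurface $\Sigma \subset M$ and perform an ADM-type splitting of $(g,b)$ adapted to $\Sigma$, writing $g$ via a lapse, a shift and an induced metric, and $b$ via its tangential and normal parts. The soliton equations then split into \emph{evolution equations} -- second order in the transverse direction for the induced metric and for $f$, first order for the gauge data -- and \emph{constraint equations} intrinsic to $\Sigma$; the latter are exactly the initial data equations whose solvability on compact Riemann surfaces is established separately. The diffeomorphism covariance is broken by a DeTurck/harmonic-type gauge (prescribing lapse and shift, equivalently adding a Bianchi-type term built from a background connection) and the gerbe gauge freedom by a Coulomb-type condition on $b$; one checks that these conditions are attained by a transformation of the admissible type and that in this gauge the principal part of the evolution system is non-characteristic along $\Sigma$.

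With the gauge fixed, the reduced evolution system is analytic in its arguments and can be solved for the top transverse derivatives, hence lies in Cauchy--Kovalevskaya normal form; the abstract Cauchy--Kovalevskaya theorem then yields, for every analytic initial datum on $\Sigma$, a unique local analytic solution of the reduced system with continuous dependence on the data. It remains to show that such a solution solves the genuine soliton system: this is a constraint-propagation argument, where the twice-contracted Bianchi identity for $\Ric_g$, combined with the structure of the $H^2$ and $\iota_{\grad f}H$ terms and with the chosen gauge, forces the quantities measuring the failure of the constraints and of the gauge conditions to satisfy a homogeneous linear system with vanishing data on $\Sigma$, hence to vanish. Finally one re-globalizes: the analytic triple $(g,b,f)$ defines an honest curving with integral curvature on the given $\U(1)$ bundle gerbe, and its self-similar character is recovered as a smooth family of automorphisms covering the flow of $\grad f$, which proves the theorem.

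The main obstacle is the second step: arranging, through an admissible gauge choice, that the a priori over- or underdetermined soliton system becomes a determined system whose transverse principal symbol is invertible -- so that Cauchy--Kovalevskaya applies -- while simultaneously keeping the gerbe gauge under control so that the gauge transformation employed is of the allowed form, i.e. a smooth family of automorphisms covering a diffeomorphism isotopic to the identity in the sense of Theorem \ref{thm:action}. Closely tied to this is the constraint-propagation step: one must verify that the cross terms $H^2$ and $\iota_{\grad f}H$ do not spoil the Bianchi-type identities that make the constraints and gauge conditions propagate, and this coupling between the metric and curving sectors is the technical heart of the argument.
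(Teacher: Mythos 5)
Your outline follows the same broad strategy as the paper (reduce to a hypersurface, put the system in Cauchy--Kovalevskaya form, then propagate the constraints), but the two steps you yourself flag as ``the main obstacle'' and ``the technical heart'' are exactly where the actual proof lives, and your proposed route makes them harder rather than easier. First, you stay in the string frame, keeping the Hessian term $\nabla^g\dd\phi$ in the Einstein equation. The paper's first move is the conformal change to the Einstein frame precisely because this term obstructs a clean split into evolution equations and constraints intrinsic to $\Sigma$: in the Einstein frame the Einstein equation contains no second derivatives of $\phi$, and the (redundant, by Lemma \ref{lemma:gradientcondition}) dilaton equation with its constant $\lambda$ is adjoined as the evolution equation for $\phi$. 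In the string frame the would-be Hamiltonian constraint still contains $\partial_\tau^2\phi$, so your evolution/constraint bookkeeping is not the one you later assume when you invoke propagation, and you never verify the alternative split actually closes. Second, your gauge-fixing is off target for a Riemannian Cauchy problem: a DeTurck/harmonic-type modification buys hyperbolicity or parabolicity, which is irrelevant for Cauchy--Kovalevskaya, while it forces you to also prove propagation of the gauge condition and to undo the DeTurck diffeomorphism at the end; the paper simply uses the Gaussian normal form $g=\dd\tau\otimes\dd\tau+h_\tau$ (Koiso-style), with no gauge vector to control. Likewise the ``Coulomb-type condition on $b$'' is unnecessary: only $H$ enters the equations, and the paper instead fixes the auxiliary families $(A_\tau,\Psi_\tau,a_\tau)$ of the reducible gerbe and evolves the globally defined derived two-form $\psi_\tau$ through its potential $\theta_\tau$ on a good cover, gluing local CK solutions by uniqueness and then reconstructing the curving via $b_\tau=\int_0^\tau(\pi^\ast\psi_s+\dd_\Sigma a_s)\,\dd s+b$. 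Your ``re-globalization'' of local potentials into an honest curving on the given gerbe is asserted in one sentence, but this is where the reducible-gerbe machinery (Lemma \ref{lemma:reduciblegerbe}, Corollary \ref{cor:curvingreduction}) does genuine work and cannot be waved through.

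Finally, the constraint-propagation step is not a corollary of ``the twice-contracted Bianchi identity plus the structure of the $H^2$ terms'': in the Riemannian, analytic setting there are no energy estimates, so one must exhibit the explicit first-order-in-$\tau$ homogeneous linear system satisfied by the constraint quantities ($C_{1\tau},C_{2\tau},C_{3\tau}$ in the paper) and then apply Cauchy--Kovalevskaya uniqueness; producing that system is the content of the long Appendix computations (built on Koiso's identities), and in your gauge you would additionally have to propagate the DeTurck and Coulomb conditions, neither of which you set up. Two smaller points: continuous dependence on the data is not delivered by the classical Cauchy--Kovalevskaya theorem and is not claimed in the paper (well-posedness here means existence and uniqueness of analytic germs), and the closing step about recovering a family of automorphisms covering the flow of $\grad f$ is not needed for the Cauchy problem at all. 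As it stands, the proposal is a plausible table of contents with the decisive lemmas missing.
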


\noindent
The reader is referred to Theorem \ref{thm:CylindricalWellPosedness} for more details. To prove this result, we first transform the gradient generalized Ricci soliton from the \emph{string frame} to the \emph{Einstein frame}, and then reduce the metric, curving, and dilaton to families of objects that satisfy a system of second-order evolution equations and \emph{time-dependent} constraints on a given hypersurface $\Sigma\subset M$. This requires reducing the abelian bundle gerbe to the Cauchy surface, which results in an abelian bundle gerbe equipped with certain time-dependent extra data. We then apply the Cauchy-Kovalevskaya theorem after an adequate change of variables to a set of local variables that is gauge-invariant with respect to stable isomorphisms. Finally, the result follows from a series of long computations, deferred to the appendix, which guarantee that the local flow preserves the constraint equations. This theorem can be understood as a natural generalization of the classical results of Koiso \cite{Koiso1981} on the Riemannian Cauchy problem for Einstein metrics to the framework of higher geometry. We then consider the constraint equations of the Cauchy problem of the gradient generalized Ricci soliton on a two-dimensional oriented manifold, proving the following result, see Theorem \ref{thm:wellposednessconstraints3d} for more details. 

\begin{thm}
Let $\Sigma$ be a compact and oriented two-dimensional manifold. For every conformal class $[h]$ of Riemannian metrics on $\Sigma$, there exists a solution to the constraint equations of the NS gradient generalized Ricci soliton system whose Riemannian metric belongs to $[h]$.  
\end{thm}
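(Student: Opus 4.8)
The strategy is to reduce the constraint equations on the surface $\Sigma$ to a single prescribed-Gauss-curvature equation for a conformal factor, and then to solve that equation by combining the uniformization theorem with the Kazdan--Warner theory of prescribed curvature. The first step is to write out explicitly the constraint equations of the NS gradient generalized Ricci soliton system on the two-dimensional $\Sigma$ obtained from the reduction established in the previous sections. The reduced initial data consist of a Riemannian metric $h$ on $\Sigma$, a symmetric two-tensor $K$ playing the role of the second fundamental form, the restriction of the dilaton together with its normal derivative, and a scalar encoding the curving --- on a surface the curvature three-form has no tangential component, so the only surviving flux datum is a function. In dimension two one has $\Ric_h=\tfrac12 R_h\, h$ with $R_h$ equal to twice the Gauss curvature, so the Hamiltonian-type constraint takes the schematic form
\[
R_h + (\tr K)^2 - |K|_h^2 \;=\; \mathcal{E},
\]
where $\mathcal{E}$ is an energy density built algebraically out of the dilaton, its normal derivative, the flux scalar and the soliton constant $\lambda$; the momentum-type constraint is first order in $K$ and in $\dd$ of the dilaton; and there is in addition a scalar dilaton constraint.

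The second step is to decouple the system by a suitable ansatz. Take $K$ to be a constant multiple of $h$ and take the dilaton, its normal derivative and the flux scalar to be constant on $\Sigma$. Then $K-(\tr K)h$ is a constant multiple of $h$ and hence divergence free, the dilaton terms in the momentum constraint vanish, and the dilaton constraint collapses to an algebraic relation among the chosen constants. The Hamiltonian constraint then becomes the prescribed-curvature equation
\[
R_h \;=\; c ,
\]
with $c\in\R$ a constant depending on the chosen constants and on $\lambda$, which we are free to prescribe subject only to the finitely many algebraic relations just imposed.

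The third step is to solve $R_h=c$ inside the conformal class $[h]$. By uniformization, $[h]$ contains a metric $h_0$ of constant Gauss curvature $K_0$ with $\sign(K_0)=\sign(\chi(\Sigma))$; writing $h=e^{2\varphi}h_0$, the equation becomes the Kazdan--Warner equation
\[
-\Delta_{h_0}\varphi + K_0 \;=\; \tfrac{c}{2}\,e^{2\varphi},
\]
which, for constant right-hand side, is solvable precisely when $\sign(c)=\sign(\chi(\Sigma))$ (for $\chi(\Sigma)>0$ take $\varphi$ constant and the round metric; for $\chi(\Sigma)=0$ take $c=0$ and the flat metric; for $\chi(\Sigma)<0$ apply Berger's theorem). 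It therefore suffices to use the remaining freedom in $\lambda$, in $\tr K$, in the normal derivative of the dilaton and in the flux scalar to arrange that $c$ has the sign dictated by Gauss--Bonnet; for each of the three topological types of $\Sigma$ this is an elementary check, and one may keep the flux scalar non-zero so that the resulting data set is a genuine, non-Einstein, generalized Ricci soliton. Unwinding the ansatz then produces the full reduced data on $\Sigma$ whose metric $h=e^{2\varphi}h_0$ lies in $[h]$.

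The main obstacle is the Gauss--Bonnet sign obstruction in the third step: integrating the Hamiltonian constraint over $\Sigma$ forces a relation between $\chi(\Sigma)$ and the constants on the right-hand side, so one must check that the ansatz of the second step retains genuinely independent parameters realising the required sign of $c$ in all three cases, and in particular that $c=0$ can be achieved exactly on the torus. Should the natural reduction leave $c$ of a fixed sign, one would instead keep the flux scalar or the dilaton non-constant and solve $R_h=w$ with $w$ of definite sign but not constant, which is still covered by the Kazdan--Warner theory; the remaining work is then the bookkeeping needed to confirm that all the constraint equations hold simultaneously.
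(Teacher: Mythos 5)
Your proposal is correct, and it shares the paper's overall reduction scheme---solve the flux and momentum constraints by an explicit ansatz and then realize the Hamiltonian constraint as a scalar equation for the conformal representative of $[h]$---but the final analytic step is genuinely different and more elementary. You take all reduced data constant: $\Theta=\kappa\,h$, constant $\phi$ and $\rho$, and $\psi=p\,\nu_h$ with $p$ constant. Then the momentum constraint and the constraint $\delta^h\psi+4\psi(\dd\phi^{\sharp_h})=0$ hold identically, and the Hamiltonian constraint of \eqref{eq:constraints1} with $\lambda=0$ becomes $s^h=2\kappa^2-\rho^2-\tfrac{1}{2}p^2e^{-4\phi}$, a constant that the parameters $(\kappa,\rho,p)$ realize with either sign or zero; uniformization plus a homothety then yields the required metric in $[h]$, with no further PDE input. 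The paper instead keeps the dilaton arbitrary, solves the flux constraint by $\psi=c\,e^{4\phi}\nu_h$, writes $\Theta=f\,h+\Theta_o$ with $f=(\int F)(\phi)+k$ determined by the momentum constraint, and is then led to the semilinear equation of Lemma \ref{lemma:conformalsecondconstraint} (Bryan--Wentworth) for the conformal factor, whose hypotheses (e.g.\ $\int w>0$) force a nonconstant dilaton on the sphere and the torus. Your route buys brevity; the paper's buys a much larger family of initial data with prescribed nonconstant dilaton, which is where the flexibility of the construction lies. Three small repairs to your write-up: for \emph{NS} solitons $\lambda=0$ is not an available parameter, but this is harmless since $\kappa$, $\rho$, $p$ already give every sign of the constant, including the exact value $0$ needed on the torus; there is no separate ``dilaton constraint'' among the initial-data equations \eqref{eq:constraints1}--\eqref{eq:constraints2} (the dilaton equation is an evolution equation), so that part of your ansatz is vacuous rather than an extra algebraic relation; and the relative sign of $\vert\Theta\vert^2_h$ and $\Tr_h(\Theta)^2$ in your schematic Hamiltonian constraint is reversed---the correct combination is $s^h+\vert\Theta\vert^2_h-\Tr_h(\Theta)^2+\rho^2-\vert\dd\phi\vert^2_h+\tfrac{1}{2}\vert\psi\vert^2_h e^{-4\phi}=0$---although this does not affect the tunability of the sign, which is the one point your ``main obstacle'' paragraph leaves to be checked.
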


\noindent
As an immediate consequence of this theorem, we obtain the following result.

\begin{cor}
Let $(\Sigma,h)$ be a punctured Riemann surface. Then, there exists an NS gradient generalized Ricci soliton $(g,b,\phi)$ on a bundle gerbe over a three-manifold $M$, and an embedding $\iota \colon \Sigma \hookrightarrow M$ such that $\iota^{\ast}g \in [h]$. In particular, there exist infinitely many different topological types of three-dimensional NS gradient generalized Ricci solitons. 
\end{cor}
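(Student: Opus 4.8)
The plan is to feed the two-dimensional data produced by Theorem~\ref{thm:wellposednessconstraints3d} into the cylindrical well-posedness statement, Theorem~\ref{thm:CylindricalWellPosedness}. Given a punctured Riemann surface $(\Sigma,h)$, let $\bar\Sigma$ be its compactification, a compact oriented Riemann surface whose complex structure (equivalently, conformal structure) restricts on $\Sigma$ to the one determined by $h$. Write $[\bar h]$ for the associated conformal class of Riemannian metrics on $\bar\Sigma$; since on an oriented surface a conformal class of metrics is the same datum as a complex structure, we have $[\bar h]|_{\Sigma}=[h]$. Applying Theorem~\ref{thm:wellposednessconstraints3d} to the compact oriented surface $\bar\Sigma$ and the class $[\bar h]$ yields a solution of the constraint equations of the NS gradient generalized Ricci soliton system on $\bar\Sigma$ whose metric $\hat h$ lies in $[\bar h]$.

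Before invoking Theorem~\ref{thm:CylindricalWellPosedness} one must check that this constraint solution can be taken in the real-analytic category. This holds because every Riemann surface carries a compatible real-analytic structure, and, once the constraint system is written in the gauge-invariant local variables used for Theorem~\ref{thm:CylindricalWellPosedness}, it is elliptic with real-analytic coefficients, so its solutions are real-analytic by the analytic elliptic regularity theorem; alternatively one uses that in dimension two the constraint solution is assembled from a constant-curvature representative and a conformal factor solving a Liouville-type equation, together with explicit curving and dilaton data, all of which are automatically analytic. This furnishes admissible analytic initial data on the hypersurface $\bar\Sigma$.

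Theorem~\ref{thm:CylindricalWellPosedness} then produces, for some $\epsilon>0$, an NS gradient generalized Ricci soliton $(g,b,\phi)$ on a $\U(1)$ bundle gerbe over a three-manifold $M$ diffeomorphic to the cylinder $\bar\Sigma\times(-\epsilon,\epsilon)$, in which $\bar\Sigma\times\{0\}$ is the Cauchy hypersurface. Composing the inclusions $\Sigma\hookrightarrow\bar\Sigma\cong\bar\Sigma\times\{0\}\hookrightarrow M$ defines an embedding $\iota\colon\Sigma\hookrightarrow M$ whose induced metric $\iota^{\ast}g$ agrees with $\hat h|_{\Sigma}$ up to the conformal rescaling relating the string and Einstein frames, hence lies in $[\bar h]|_{\Sigma}=[h]$, which is the first assertion. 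For the last assertion, carry out this construction for once-punctured Riemann surfaces of every genus $\gamma\ge 0$; this yields NS gradient generalized Ricci solitons on bundle gerbes over three-manifolds $M_{\gamma}$ diffeomorphic to $\bar\Sigma_{\gamma}\times(-\epsilon_{\gamma},\epsilon_{\gamma})$, where $\bar\Sigma_{\gamma}$ is the closed genus-$\gamma$ surface. Each $M_{\gamma}$ deformation retracts onto $\bar\Sigma_{\gamma}$, and closed orientable surfaces of distinct genus are not homotopy equivalent (their first Betti numbers $2\gamma$ differ), so the $M_{\gamma}$ are pairwise non-homeomorphic and the solitons realise infinitely many topological types.

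The one step that requires genuine care is the analyticity claim of the second paragraph: one must ensure that the specific constraint data (reduced metric, curving, dilaton, and their transversal jets) delivered by Theorem~\ref{thm:wellposednessconstraints3d} lies in the real-analytic category demanded by the Cauchy-Kovalevskaya argument underlying Theorem~\ref{thm:CylindricalWellPosedness}. The remaining ingredients, namely compactifying a punctured Riemann surface, restricting conformal classes, and distinguishing the cylinders $M_{\gamma}$ topologically, are routine.
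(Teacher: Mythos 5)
Your argument is correct and follows essentially the route the paper intends: the corollary is left as an immediate consequence of Theorem \ref{thm:wellposednessconstraints3d} combined with Theorem \ref{thm:CylindricalWellPosedness}, exactly as you do — compactify the punctured surface, solve the constraints within the extended conformal class, evolve off the Cauchy surface, restrict the embedding to $\Sigma$, and vary the genus to obtain infinitely many topological types. Your explicit check that the constraint data can be taken real-analytic (constant-curvature representative, analytic choices of $\phi$ and $F$, vanishing $\Theta_o$, and analytic elliptic regularity for the Liouville-type equation) addresses a hypothesis of Theorem \ref{thm:CylindricalWellPosedness} that the paper leaves implicit, and your remark that the string/Einstein frame rescaling is conformal, hence harmless for the claim $\iota^{\ast}g\in[h]$, is likewise the right observation.
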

 
\noindent
Here, by an \emph{NS gradient generalized Ricci soliton} we refer to a gradient generalized Ricci soliton that also satisfies the \emph{dilaton equation} of NS supergravity and is therefore an honest solution of the bosonic equations of the latter. We hope that these results can be exploited in the future to construct new complete gradient generalized Ricci solitons in the spirit of \cite{PodestaRaffero}. In a different direction, it would be interesting to generalize these results to the case of \emph{Heterotic solitons} \cite{Moroianu:2021kit}, which arise as self-similar solutions to the Heterotic renormalization group flow at second order \cite{Moroianu:2023jof,Papadopoulos:2024tgs}. This flow yields a natural generalization of the notion of generalized Ricci flow that involves a Riemann curvature term \emph{squared}, which makes the analysis involved notably different from the generalized Ricci flow case.

\begin{ack} 
The work of CSS was partially supported by the Leonardo grant LEO22-2-2155 of the BBVA Foundation and the research grant PID2023-152822NB-I00 of the Ministry of Science of the government of Spain. The work of MP was supported by the UNED-Santander 2024 predoctoral fellowship of the Santander Open Academy foundation. CSS would like to thank D. Michael Roberts and K. Waldorf for very illuminating discussions and explanations, and M. García-Fernández and J. Streets for interesting comments. Furthermore, we have benefited from continuous discussions and collaborations with Vicente Cortés and O. Schiller, who have obtained analogous results within the smooth category in Lorentzian signature \cite{Schiller}.
\end{ack}

% % % % % % % % % % % % % % % % % % % % % % % % % % % % % % % % % % % % % % 
% % % % % % % % % % % % % % % % % % % % % % % % % % % % % % % % % % % % % % 

\section{The generalized Ricci flow on a bundle gerbe}
\label{sec:NSNSsystem}

% % % % % % % % % % % % % % % % % % % % % % % % % % % % % % % % % % % % % % 
% % % % % % % % % % % % % % % % % % % % % % % % % % % % % % % % % % % % % % 

This section introduces the generalized Ricci flow \cite{OliynykSuneetaWoolgar,Polchinski} on a fixed bundle gerbe $(\cP,A,\pi,\mu)$ over an oriented manifold $M$. Our primary objective is to characterize its self-similar flows, which lead to the notion of \emph{generalized Ricci-soliton} \cite{GFStreetsBook}.
Let us briefly recall the definition of a bundle gerbe---for full details, we refer the reader to~\cite{Waldorf0, Waldorf, Murray, BehrendXu, Bunk21}.
Given a surjective submersion $\pi \colon Y \to M$, we denote its $n$-fold fiber product with itself over $M$ by $Y^{[n]} = Y \times_M \cdots \times_M Y$ (it is the $(n{-}1)$-st level in the \v{C}ech nerve of $\pi$).

\begin{definition}
    A \textit{bundle gerbe} on $M$ consists of a surjective submersion $\pi \colon Y \to M$, a $\U(1)$-bundle $\cP \to Y^{[2]}$, and a bundle isomorphism:
    \begin{equation}
        \mu \colon \pi^{\ast}_{12}\cP \otimes \pi^{\ast}_{23}\cP
        \xrightarrow{\simeq} \pi^{\ast}_{13}\cP
    \end{equation}
    over $Y^{[3]}$, where:
\begin{equation*}
    \pi_{ij} \colon Y \times_M Y \times_M Y \to Y\times_M Y\, , \qquad i, j = 1,2,3
\end{equation*}
denotes the projection onto the $i$-th and $j$-th factors. This bundle isomorphism (sometimes called gerbe multiplication) has to satisfy an associativity condition over $Y^{[4]}$. A \textit{connective structure} on a bundle gerbe as above consists of a connection on the $\U(1)$-bundle $\cP$ such that the bundle morphism $\mu$ is connection-preserving. A \textit{curving} on a bundle gerbe with connective structure is a $2$-form $b \in \Omega^2(Y)$ such that $\pi_2^*b - \pi_1^*b = F_A$ in $\Omega^2(Y^{[2]})$.
\end{definition}

We denote a bundle gerbe with connective structure as a quadruple $(\cP,A,\pi,\mu)$.
From now on, all bundle gerbes appearing in this article will be endowed with a connective structure unless stated otherwise.
Thus, to save notation, the term bundle gerbe shall always include the choice of a connective structure.
A compatible pair $(A,b)$ of a connective structure and a curving on a bundle gerbe is called a \textit{connection} on the bundle gerbe.

\begin{remark}
    A more compact way of defining a bundle gerbe is as a central extension of Lie groupoids:
    \begin{equation*}
        Y\times \U(1) \to (\cP, A) \to Y\times_M Y \rightrightarrows Y \xrightarrow{\pi} M
    \end{equation*}
    where the $Y\times \U(1)$ is understood as the Lie groupoid whose source and target maps are both equal and given by the canonical projection $Y\times \U(1) \to Y$.
\end{remark}

Although the generalized Ricci flow is usually considered as an evolution flow for generalized metrics on an exact Courant algebroid \cite{GFStreetsBook}, from a supergravity point of view, and thanks to the celebrated Dirac quantization condition for the supergravity \emph{H-flux}, it is also natural to consider it as a geometric flow on a geometric model for the singular cohomology group $H^3(M,\mathbb{Z})$ rather than $H^3(M,\mathbb{R})$. In this note, we choose this geometric model to be a bundle gerbe \cite{Murray,BehrendXu}. In this context, it is noteworthy that generalized metrics on exact Courant algebroids with integral \v{S}evera class are closely related to pairs of Riemannian metrics and curvings on a bundle gerbe with connective structure. Indeed, Hitchin's generalized tangent bundle construction associates to each gerbe with connective structure an exact Courant algebroid, and this construction exhausts all integral exact Courant algebroids up to isomorphism~\cite{Hitchin}. Then, generalized metrics on the associated Courant algebroid are in bijection with pairs of a Riemannian metric on the base manifold and a curving on the gerbe with connective structure. However, gerbes have a richer structure of morphisms between them, as well as a natural notion of tensor product that categorifies the abelian group structure on $H^3(M,\mathbb{Z})$. From a gauge theory perspective, gerbes (with connective structure) are to principal bundles as Courant algebroids are to the Atiyah algebroids of those principal bundles (for more details on this relationship, we refer the reader to \cite{Collier, BunkShahbazi}). Thus, through this lens, it seems very natural to consider the $b$-field, a 2-form gauge field, as a curving on a bundle gerbe with connective structure. However, to our knowledge, geometric flows have not been systematically studied in combination with higher geometric structures beyond the algebroid point of view \cite{GarciaFernandez,GFMolinaStreets}, and so we will need to first set up the appropriate stage.
 
% % % % % % % % % % % % % % % % % % % % % % % % % % % % % % % % % % % % % % 

\subsection{The generalized Ricci flow}

% % % % % % % % % % % % % % % % % % % % % % % % % % % % % % % % % % % % % % 

Given a curving $b\in \Omega^2(Y)$ on $(\cP,A,\pi,\mu)$ we denote its curvature by $H_b\in \Omega^3(M)$. This is a closed three-form defined on $M$ with periods in $2\pi\mathbb{Z}$; it is defined as the unique $3$-form on $M$ such that $\pi^*H_b = \dd b$ on $Y$. The exact simplicial differential of the simplicial manifold generated as the \v{C}ech nerve of $Y\to M$ will be denoted by $\delta\colon \Omega^k(Y^{[j]})\to \Omega^k(Y^{[j+1]})$ for any $j,k\in\mathbb{N}$, where $Y^{[k]}$ is the $k$-fold fibered product of $Y\to M$ with itself.

\begin{definition}
\label{def:NSNSsystem}
The \emph{generalized Ricci flow} on $(\cP,A,\pi,\mu)$ is the following curvature evolution flow:
\begin{equation}
\label{eq:GeneralizedRicciflow}
\partial_t g_t = - 2 \Ric^{g_t} +    H_{b_t} \circ_{g_t} H_{b_t}\, , \qquad \partial_t b_t = - \nabla^{g_t\ast}H_{b_t}  
\end{equation}
	
\noindent
for pairs $\left\{ g_t,b_t\right\}$ consisting of a smooth family $g_t$ of Riemannian metrics on $M$ and a smooth family $b_t\in \Omega^2(Y)$ of curvings. Here we have set:
\begin{equation*}
(H_{b_t} \circ_{g_t} H_{b_t}) (v_1,v_2) = \langle H_{b_t}(v_1) , H_{b_t}(v_2)\rangle_{g_t} \, , \qquad \forall\,\, v_1 , v_2 \in \mathfrak{X}(M)
\end{equation*}

\noindent
in terms of the determinant metric $\langle - , - \rangle_{g_t}$ induced by $g_t$ on the exterior forms on $M$.
\end{definition}

\noindent
Hence, initial data for the generalized Ricci flow consist of pairs of the form $(g,b)$, where $g$ is a Riemannian metric $M$ and $b$ is a curving on $(\cP,A,\pi,\mu)$.  

\begin{remark}
The set of curvings on $(\cP,A,\pi,\mu)$ is an affine space modelled on the two-forms $\Omega^2(M)$ on $M$, that is, if $b_1,b_2\in \Omega^2(Y)$ are curvings on $(\cP,A,\pi,\mu)$, then there exists a unique two-form $b_0\in \Omega(M)$ such that:
\begin{equation*}
b_1 - b_2 = \pi^{\ast}b_0 
\end{equation*}

\noindent
Therefore, the \emph{time} derivative $ \partial_t b_t$ of a smooth family $b_t$ of curvings, which is a priori a family of two-forms on $Y$ satisfying $F_A= \delta b_t \in \Omega^2(Y\times_M Y)$, is in fact canonically defined as a family of two-forms on $M$, and is understood as such in the second equation of \eqref{eq:GeneralizedRicciflow}. 
\end{remark}

% % % % % % % % % % % % % % % % % % % % % % % % % % % % % % % % % % % % % % 

\subsection{The automorphism 2-group of a bundle gerbe}

% % % % % % % % % % % % % % % % % % % % % % % % % % % % % % % % % % % % % % 

The symmetries of a bundle gerbe with a fixed connective structure play a crucial role in characterizing the self-similar solutions of the generalized Ricci flow. Unlike the symmetries of a principal bundle, the symmetries of bundle gerbes do not form a group. Instead, stable isomorphisms conform to a higher algebraic analog of a group called a \emph{2-group}. We begin with the definition of stable isomorphism between given bundle gerbes $(\cP_1,A_1,\pi_1)$ and $(\cP_2,A_2,\pi_2)$. This is the notion of isomorphism adequate to guarantee that \emph{isomorphism classes} of bundle gerbes are in bijection with $H^3(M,\mathbb{Z})$ \cite{MurrayStevenson}.

\begin{definition}
An \emph{isomorphism} between $(\cP_1,A_1,\pi_1)$ and $(\cP_2,A_2,\pi_2)$ is a tuple:
\begin{eqnarray*}
 (\cQ,\Theta,\xi,\Psi)   \colon (\cP_1,A_1,\pi_1) \to (\cP_2,A_2,\pi_2)
\end{eqnarray*}

\noindent
consisting of a principal $\U(1)$ bundle $\cQ \to Z$ with connection $\Theta$ defined on the total space of the smooth submersion $\xi \colon Z \to Y_1 \times_M Y_2$ and an isomorphism of $\U(1)$ bundles with connection over $Z^{[2]}$ (see the master commutative diagram below for the notation):
\begin{equation*}
\Psi\colon (\varrho^{\ast}_1\cP_1, \varrho^{\ast}_1A_1) \otimes (\mathfrak{z}_{2}^{\ast} \cQ , \mathfrak{z}_{2}^{\ast} \Theta ) \to  (\mathfrak{z}_{1}^{\ast}  Q , \mathfrak{z}_{1}^{\ast} \Theta ) \otimes (\varrho^{\ast}_2\cP_2, \varrho^{\ast}_2A_2)
\end{equation*}

\noindent
fitting into the following commutative diagram:
\[
\begin{tikzcd}
	\varrho^*_{12}\cP_1 \otimes \varrho^*_{23}\cP_1 \otimes \mathfrak{z}^{\ast}_{(3)} Q \arrow[r, "\mu_1 \otimes \Id"] \arrow[d, "\Id \otimes \varrho^*_{23}\Psi"] & \varrho^*_{13}\cP_1 \otimes \mathfrak{z}^{\ast}_{(3)} Q \arrow[dd, "\varrho^*_{13}\Psi"] \\
	\varrho^*_{12}\cP_1 \otimes \mathfrak{z}^{\ast}_{(2)} Q \otimes \varrho^*_{23}\cP_2 \arrow[d, "\varrho^*_{12}\Psi \otimes \Id"] &  \\
	\mathfrak{z}^{\ast}_{(1)} Q \otimes \varrho^*_{12}\cP_2 \otimes \varrho^*_{23}\cP_2 \arrow[r, "\Id \otimes \mu_2"] & \mathfrak{z}^{\ast}_{(1)} Q \otimes \varrho^*_{13}\cP_2
\end{tikzcd}
\]

\noindent
over $Z^{[3]}$, where: 
\begin{equation*}
\varrho_{ij} \colon Z\times_M Z \times_M Z \to Z\times_M Z\, , \qquad i, j = 1,2,3
\end{equation*}

\noindent
denotes the map that projects onto the $i$-th and $j$-th factors.
\end{definition}

\noindent
The significance of the various maps and pull-backs occurring in the previous definition is explained in the following commutative diagram, to which we will refer as the \emph{master commutative diagram}:

\begin{center}
\hspace*{-2.2cm}
\begin{tikzcd}[row sep=3.5em, column sep=0.1em]
	% Row 0
	&  (\varrho^{\ast}_1\cP_1, \varrho^{\ast}_1A_1) \otimes (\mathfrak{z}^{\ast}_{(2)} \cQ , \mathfrak{z}^{\ast}_{(2)}\Theta ) \arrow[rr, "\Psi"] \arrow[dr] 
	    % New long arrow from A
	    \arrow[to=6-1, "{\varrho_1^{\ast}}" description, out=-160, in=150, looseness=1.2]
	& & (\mathfrak{z}^{\ast}_{(1)} Q , \mathfrak{z}^{\ast}_{(1)}\Theta ) \otimes (\varrho^{\ast}_2\cP_2, \varrho^{\ast}_2A_2) \arrow[dl] 
	    % New long arrow from B
	    \arrow[to=6-5, "{\varrho_2^{\ast}}"' description, out=-20, in=30, looseness=1.2] 
	& \\
	% Row 1
	& & Z \times_M Z
	    \arrow[dd, bend left=80, "\mathfrak{z}_{2}" pos = 0.65]
	    \arrow[dd, bend right=80, "\mathfrak{z}_{1}"' pos = 0.65]
	    \arrow[to=6-2, "\varrho_1" description, out=-150, in=120, sloped, above, crossing over]
	    \arrow[to=6-4, "\varrho_2" description, out=-30, in=60, sloped, above, crossing over]
	& & \\
	% Row 2
	& & (Q,\Theta) \arrow[d, "\pi_Q"] & & \\
	% Row 3
	& & Z \arrow[d, "\xi"] & & \\
	% Row 4: Arrows from Y1xM Y2 are modified
	& & Y_1 \times_M Y_2 \arrow[ddl, "p_{Y_1}"' description] \arrow[ddr, "p_{Y_2}" description] & & \\
	% Row 5
	(\cP_1, A_1) \arrow[r] & Y_1 \times_{\pi_1} Y_1 \arrow[d, shift left=.7ex] \arrow[d, shift right=.7ex] & & Y_2 \times_{\pi_2} Y_2 \arrow[d, shift left=.7ex] \arrow[d, shift right=.7ex] & (\cP_2, A_2) \arrow[l] \\
	% Row 6
	& Y_1 \arrow[dr, "\pi_1"', end anchor={[xshift=-1ex]}] & & Y_2 \arrow[dl, "\pi_2", end anchor={[xshift=1ex]}] & \\
	% Row 7
	& & M & &
\end{tikzcd}
\end{center}

\noindent
Here we are denoting by: 
\begin{equation*}
\mathfrak{z}_{(i)} \colon Z\times_M Z \times_M Z \to Z \, , \qquad i = 1,2,3
\end{equation*} 

\noindent
the map that projects the \emph{i-th} entry to $Z$. Recall that a stable isomorphism may exist between given bundle gerbes if and only if they have the same Dixmier-Douady class in $H^3(M,\mathbb{Z})$. As explained in detail in \cite{Waldorf}, stable isomorphisms can be naturally composed and admit \emph{higher} isomorphisms, called \emph{2-isomorphisms}, that can in turn also be naturally composed, resulting in bundle gerbes over a fixed manifold forming a 2-category. More precisely, the collection of all bundle gerbes with connective structure on $M$, together with their stable isomorphisms and two-isomorphisms as introduced below, forms a \emph{bigroupoid}, which we denote by $\mathrm{Grb}(M)$. We refer the reader to \cite{MurrayStevenson,Waldorf0,Waldorf,Bunk21} for more details.

\begin{definition}
\label{def:2morphisms}
Let $(\cP_1,A_1,\pi_1,\mu_1)$ and $(\cP_2,A_2,\pi_2,\mu_2)$ be bundle gerbes over $M$. Let $(\cQ,\Theta,\xi,\Psi)$ and $(\cQ^{\prime}, \Theta^{\prime}, \xi^{\prime},\Psi^{\prime})$ be stable isomorphisms from $(\cP_1,A_1,\pi_1,\mu_1)$ to $(\cP_2,A_2,\pi_2,\mu_2)$. A \emph{representative} of a 2-isomorphism from $(\cQ,\Theta,\xi,\Psi)$ to $(\cQ^{\prime}, \Theta^{\prime}, \xi^{\prime},\Psi^{\prime})$ is a pair $(\frw, \beta)$ consisting of:
\begin{itemize}
\item A surjective submersion $\frw\colon W \to Z \times_{Y_1\times_M Y_2} Z^{\prime}$.
\item An isomorphism $\beta \colon (Q,\Theta) \to (Q^{\prime},\Theta^{\prime})$ of principal $\U(1)$-bundles with connection fitting into the following commutative diagram of morphisms of principal $\U(1)$-bundles with connection over $W^{[2]}$:
\begin{center}
\begin{tikzcd}
 (\rho_1\circ \frw^{[2]})^{\ast} (\cP_1 , A_1)\otimes  (\mathfrak{z}_{2} \circ \frw^{[2]})^{\ast} (Q,\Theta) \arrow[r, "\Psi"] \arrow[d, "\Id \otimes \beta"] &  (\mathfrak{z}_{1} \circ \frw^{[2]})^{\ast} (Q,\Theta) \otimes  (\rho_2 \circ \frw^{[2] \prime})^{\ast}(\cP_2 , A_2) \arrow[d, "\beta \otimes \Id"] \\
(\rho_1\circ \frw^{[2]})^{\ast} (\cP_1 , A_1) \otimes (\mathfrak{z}_{2}^{\prime} \circ \frw^{[2] \prime})^{\ast} (Q^{\prime} , \Theta^{\prime}) \arrow[r, "\Psi^{\prime}"] &  (\mathfrak{z}_{1}^{\prime} \circ \frw^{[2] \prime})^{\ast} (Q^{\prime} , \Theta^{\prime}) \otimes (\rho_2\circ \frw^{[2] \prime})^{\ast}(\cP_2 , A_2)
\end{tikzcd}
\end{center}

\noindent 
Here:
\begin{equation*}
Z\times_M Z \xleftarrow{\frw^{[2]}} W\times_{Y_1\times_M Y_2} W \xrightarrow{\frw^{[2]\prime}}  Z^{\prime}\times_M Z^{\prime}
\end{equation*}
are the maps naturally induced by $\frw\colon W \to Z \times_{Y_1\times_M Y_2} Z^{\prime}$ on the fibered product of $W$ with itself. Two such representatives of 2-isomorphism $(\frw, \beta)$ and $(\hat{\frw}, \hat{\beta})$ are said to be \emph{equivalent} if there exists a smooth manifold $X$ with surjective submersions to $W$ and $\widehat{W}$ such that the following diagram commutes:
\begin{center}
\begin{tikzcd}
& X \arrow[dl] \arrow[dr] & \\
W \arrow[dr, "\frw"] & & \widehat{W} \arrow[dl, "\widehat{\frw}"'] \\
& Z \times_{Y_1\times_M Y_2} Z^{\prime} &
\end{tikzcd}
\end{center}
and the pullbacks of the morphisms $\beta$ and $\beta^{\prime}$ to $X$ coincide.
\end{itemize}

\noindent
The \emph{2-isomorphisms} from $(\cQ,\Theta,\xi,\Psi)$ to $(\cQ^{\prime}, \Theta^{\prime}, \xi^{\prime},\Psi^{\prime})$ are the equivalence classes defined by the above equivalence relation on the representatives of 2-isomorphisms.
\end{definition}

\noindent
Let $(\cQ , \Theta , \xi ,\Psi)\colon (\cP_1,A_1,\pi_1,\mu_1) \to (\cP_2,A_2,\pi_2,\mu_2)$ be an isomorphism, and let $b_1$ and $b_2$ be curvings on $(\cP_1,A_1,\pi_1)$ and $(\cP_2,A_2,\pi_2)$, respectively. We say that $(\cQ , \Theta , \xi ,\Psi)$ \emph{maps} or \emph{relates} $b_1$ to $b_2$ if:
\begin{equation}
\label{eq:relationbs}
(p_{Y_2}\circ \xi)^{\ast} b_2 = (p_{Y_1}\circ \xi)^{\ast} b_1 + F_{\Theta} \in \Omega^2(Z)
\end{equation}

\noindent
where $F_{\Theta}\in \Omega^2(Z)$ is the curvature of $\Theta\in \Omega^1(\cP)$ as a real 2-form on $Z$. From this equation, it follows that:
\begin{equation*}
(p_{Y_2}\circ \xi)^{\ast} \dd_{Y_2} b_2 = (p_{Y_1}\circ \xi)^{\ast} \dd_{Y_1} b_1
\end{equation*}

\noindent
where $\dd_{Y_1}$ and $\dd_{Y_2}$ denote the exterior differentials on $Y_1$ and $Y_2$, respectively. This, in turn, implies:
\begin{equation*}
(\pi_2 \circ p_{Y_2}\circ \xi)^{\ast} H_{b_2} = (\pi_1 \circ p_{Y_1}\circ \xi)^{\ast} H_{b_1}
\end{equation*}

\noindent
where $H_{b_1} , H_{b_2} \in \Omega^3(M)$ are the curvatures of $b_1$ and $b_2$, respectively. More explicitly, taking $z\in Z$ and set $(y_1 , y_2) := \xi(z)$, for every pair of vectors $V_1 , V_2 \in T_z Z$, Equation \eqref{eq:relationbs} implies:
\begin{equation*}
b_2\vert_{y_2} (\dd_z(p_{Y_2}\circ \xi)V_1 , \dd_z(p_{Y_2}\circ \xi)V_2) = b_1\vert_{y_1} (\dd_z(p_{Y_1}\circ \xi)V_1 , \dd_z(p_{Y_1}\circ \xi)V_2) + F_{\Theta}\vert_z (V_1 , V_2)
\end{equation*}

\noindent
This equation explicitly relates the evaluation of $b_2$ at $y_2 \in Y_2$ to the evaluation of $b_1$ at $y_1 \in Y_1$ via an object, the curvature $F_{\Theta}$, evaluated at $z\in Z$, where the map $Z \to M$ plays the role of a \emph{common refinement} of $Y_1 \to M$ and $Y_2 \to M$ (they belong to a generalized version of coverings of $M$). For every bundle gerbe $(\cP,A,\pi,\mu)$, we define its \emph{gauge 2-group} $\cG(\cP,A,\pi,\mu)$ as follows:
first, a 2-group is a monoidal groupoid in which each object has an inverse with respect to the monoidal structure.
Then, the groupoid underlying the 2-group $\cG(\cP,A,\pi,\mu)$ is the automorphism groupoid of $(\cG,A,\pi)$ in $\mathrm{Grb}(M)$. This groupoid carries a canonical monoidal structure, which arises from the composition in the 2-groupoid $\mathrm{Grb}(M)$. Hence, it is naturally a 2-group. By construction, its monoidal structure turns out to be symmetric (i.e.~homotopy-coherently commutative).

\begin{example}
Let $(\cQ,\Theta,\xi,\Psi) \in \cG(\cP,A,\pi,\mu)$, and set $Z = Y\times_M Y$ with the submersion $\xi\colon Z\to Y\times_M Y$ given by the identity. I this case the master commutative diagram reduces to:
\begin{center}
\begin{tikzcd}[row sep=3.5em, column sep=2.5em]
	% Row 1
	& & (Y \times_{\pi} Y) \times_M (Y \times_{\pi} Y)
	    \arrow[to=3-2, "\varrho_1", out=-160, in=120, sloped, above]
	    \arrow[to=3-4, "\varrho_2", out=-20, in=60, sloped]
	    \arrow[dd, "\mathfrak{z}_{2}", bend left=80, crossing over]
	    \arrow[dd, "\mathfrak{z}_{1}"', bend right=80, crossing over]
	& & \\
	% Row 2
	& & (Q,\Theta) \arrow[d, "\pi_Q"]  & & \\
	% Row 3: Arrows from A and B are now pairs
	(\cP, A) \arrow[r] & Y\times_{\pi} Y  \arrow[d, shift left=0.7ex] \arrow[d, shift right=0.7ex] & Y \times_{\pi} Y \arrow[dl, "p_1"'] \arrow[dr, "p_2"] & Y\times_{\pi} Y \ \arrow[d, shift left=0.7ex] \arrow[d, shift right=0.7ex] & (\cP, A)  \arrow[l] \\
	% Row 4
	& Y \arrow[dr, "\pi"'] & & Y \arrow[dl, "\pi"] & \\
	% Row 5
	& & M & &
\end{tikzcd}
\end{center}

\noindent
Similarly, Equation \eqref{eq:relationbs} reduces to:
\begin{equation}
\label{eq:brelationsexample}
p_{2}^{\ast} b_2 = p_{1}^{\ast} b_1 + F_{\Theta} \in \Omega^2(Y \times_M Y)
\end{equation} 

\noindent
Evaluating this expression at a point $(y_1 , y_2) \in Y\times_M Y$, we obtain:
\begin{eqnarray*}
 b_2\vert_{y_2}(\dd_{(y_1,y_2)} p_2 V_1, \dd_{(y_1,y_2)} p_2 V_2) = b_1\vert_{y_1}(\dd_{(y_1,y_2)} p_1 V_1, \dd_{(y_1,y_2)} p_1 V_2) + F_{\Theta}\vert_{(y_1,y_2)}(V_1,V_2)    
\end{eqnarray*}

\noindent
The points $y_1, y_2 \in Y$ belong to the same fiber of $\pi\colon Y\to M$ but may be different a priori. Hence, we cannot evaluate the same \emph{arguments} in the previous equation. Let $s\colon U\subset M \to Y$ be a local section of $\pi\colon Y\to M$, where $U$ is a contractible open subset of $M$. This defines a local section $s^{[2]}\colon U\subset M \to Y\times_M Y$ of the submersion $Y\times_M Y \to M$ which we use to pull Equation \eqref{eq:brelationsexample} back to $M$, obtaining:
\begin{eqnarray*}
s^{[2]\ast} p_{2}^{\ast} b_2 = s^{[2]\ast} p_{1}^{\ast} b_1 + s^{[2]\ast} F_{\Theta} \in \Omega^2(U)
\end{eqnarray*}

\noindent
which is equivalent to:
\begin{eqnarray*}
s^{\ast}  b_2 = s^{\ast} b_1 + s^{[2]\ast} F_{\Theta} \in \Omega^2(U)
\end{eqnarray*}

\noindent
Since $F_{\Theta} \in \Omega^2(Z)$ is closed and $U$ is contractible, there exists a local one-form $\theta\in \Omega^1(U)$ such that:
\begin{equation}
\label{eq:localrelationb2}
s^{\ast}  b_2 = s^{\ast} b_1 + \dd \theta \in \Omega^2(U)
\end{equation}

\noindent
Hence, the gauge transformations of $(\cP,A,\pi,\mu)$ relating $b_1$ and $b_2$ reproduce the celebrated local \emph{gauge transformations} of the $b$-field. 
\end{example}
 
\begin{remark}
Denote by $\mathrm{Bun}(M)$ the symmetric monoidal category of $\U(1)$ bundles with connection defined on $M$. Then, there is an equivalence of 2-groups between $\cG(\cP,Y,A)$ and $\mathrm{Bun}(M)$.    
\end{remark}

Following~\cite{BunkMüllerSzabo,BunkShahbazi}, we define automorphisms of bundle gerbes that cover possibly non-trivial diffeomorphisms of the base manifold:

\begin{definition}
An \emph{automorphism} of $(\cP,A,\pi,\mu)$ is a tuple of the form $(\cQ,\Theta, \xi, \Psi,f)$, where $f\colon M \to M$ is a diffeomorphism isotopic to the identity and $(\cQ,\Theta, \xi , \Psi)\colon (\cP,A,\pi,\mu) \to (\cP^f,A^f,\pi^f)$ is a stable isomorphism between $(\cP,A,\pi,\mu)$ and its pull-back $(\cP^f,A^f,\pi^f)$ by $f\colon M\to M$.  
\end{definition}

\begin{definition}
The \emph{automorphism 2-group} $\Aut(\cP,A,\pi,\mu)$ of $(\cP,A,\pi,\mu)$ is the groupoid whose objects are automorphisms of $(\cP,A,\pi,\mu)$, and whose arrows are defined as follows: given two objects $(\cQ,\Theta, \xi, \Psi,f)$ and $(\cQ',\Theta', \xi', \Psi',f')$, there exist no morphisms between them if $f \neq f'$. If $f = f'$, then the morphisms $(\cQ,\Theta, \xi, \Psi,f) \longrightarrow (\cQ',\Theta', \xi', \Psi',f)$ are exactly the 2-isomorphisms $(\cQ,\Theta, \xi, \Psi) \longrightarrow (\cQ',\Theta', \xi', \Psi')$ in $\mathrm{Grb}(M)$.
\end{definition}

\noindent
Analogously to~\cite{BunkMüllerSzabo} (which contains this construction just without connective structures, but working with smooth 2-groups), we have a short exact sequence of 2-groups:
\begin{equation*}
1 \Rightarrow \cG(\cP,A,\pi,\mu) \Rightarrow \Aut(\cP,A,\pi,\mu) \Rightarrow \Diff_o(M) \Rightarrow 1 
\end{equation*}
 
\noindent
where $\Diff_o(M)$ denotes the diffeomorphisms of $M$ isotopic to the identity, understood as a 2-group with only identity arrows.\medskip

A key ingredient in this paper is the action of a bundle gerbe automorphism on the set of curvings. We first prove the existence of a canonical action of this form in full generality, i.e.~without any simplifying assumptions on the acting 1- and 2-morphisms of bundle gerbes. Subsequently, we specialize to the case where the diffeomorphism $f$ covered by a bundle gerbe automorphism admits a lift to $Y$; this is sufficient for our purposes and leads to simplified formulas.\medskip

Let $(\cG, A, \pi)$ be a bundle gerbe (with connective structure), and let $(\cQ, \Theta, \xi, \Psi, f)$ be an automorphism of $(\cG, A, \pi)$ covering a diffeomorphism $f\colon M \to M$. The pullback surjective submersion $\pi^f \colon Y^f \to M$ first into a commutative square:
\begin{equation}
    \begin{tikzcd}
        Y^f \ar[r, "\widehat{f}"] \ar[d, "\pi^f"']
        & Y \ar[d, "\pi"]
        \\
        M \ar[r, "f"']
        & M
    \end{tikzcd}
\end{equation}
Suppose that we are given a curving $b \in \Omega^2(Y)$ for the bundle gerbe $(\cG, A, \pi)$. It induces a curving $\widehat{f}^* b \in \Omega^2(Y^f)$ for the pullback bundle gerbe $(\cG^f, A^f, \pi^f)$, and we can further pull back this 2-form along the map $p_{Y^f} \circ \xi \colon Z \to Y^f$. We observe that if we had a curving $b' \in \Omega^2(Y)$ on the original bundle gerbe $(\cG, A, \pi,\mu)$ such that $(\cQ, \Theta, \xi, \Psi, f)$ was a morphism of bundle gerbes with connection, then $b'$ would have to satisfy the equation:
\begin{equation}
    (p_{Y^f} \circ \xi)^* \widehat{f}^* b
    = (p_Y \circ \xi)^* b' + F_\Theta
\end{equation}
of $2$-forms on $Z$.
We now show that this formula can indeed be used to induce a unique curving $b'$ on the bundle gerbe $(\cG, A, \pi, \mu)$ from the data of $b$ and $(\cQ, \Theta, \xi, \Psi, f)$, by proving that the form $(p_{Y^f} \circ \xi)^* \widehat{f}^* b - F_\Theta$ descends along the composed morphism $Z \to Y \times_M Y^f \to Y$:

\begin{prop}
\label{prop:general action on curvings}
    Consider the surjective submersion $\zeta = p_y \circ \xi \colon Z \to Y$.
    Let $\delta_Y$ denote the \v{C}ech differential acting on differential forms on the \v{C}ech nerve of $\zeta$.
    The following statements hold true:
    \begin{enumerate}
        \item In the above set-up, we have that:
            \begin{equation}
                \delta_Y \big( (p_{Y^f} \circ \xi)^* \widehat{f}^* b - F_\Theta \big) = 0
            \end{equation}

        \item There exists a unique $2$-form $b' \in \Omega^2(Y)$ such that, on $Z$:
            \begin{equation}
                (p_{Y^f} \circ \xi)^* \widehat{f}^* b
                = (p_{Y^f} \circ \xi)^* b' + F_\Theta
            \end{equation}

        \item The $2$-form $b'$ is a curving for $(\cG, A, \pi)$, whose curvature is:
        \begin{equation}
            H_{b'} = f^* H_b
        \end{equation}
    \end{enumerate}
\end{prop}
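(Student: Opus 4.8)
The plan is to prove the three statements in order, since (2) uses (1) and (3) uses (2). For part (1), the key observation is that $F_\Theta$ is the curvature of the connection $\Theta$ on $\cQ \to Z$, and the defining $2$-isomorphism data $\Psi$ of the stable isomorphism $(\cQ,\Theta,\xi,\Psi)$ forces a compatibility between $\delta_Z F_\Theta$ (the \v{C}ech differential over the nerve of $\xi\colon Z \to Y_1 \times_M Y_2 = Y \times_M Y^f$) and the curvatures $F_A$, $F_{A^f}$ on $\cP$, $\cP^f$. Concretely, taking curvature of the connection-preserving isomorphism $\Psi$ gives an identity of the form $\varrho_1^* F_A + \delta^{(2)} F_\Theta = \varrho_2^* F_{A^f}$ on $Z^{[2]}$, where $\delta^{(2)}$ is the alternating sum of the two maps $\mathfrak z_1,\mathfrak z_2$. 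Since $b$ is a curving on $(\cG,A,\pi)$, we have $\delta_Y b = F_A$ on $Y^{[2]}$, hence $\delta_{Y^f}(\widehat f^* b) = F_{A^f}$ on $(Y^f)^{[2]}$ by naturality of $\widehat f$. Pulling these back along $\xi$ and combining with the curvature identity for $\Psi$, the terms involving $F_A$ and $F_{A^f}$ cancel, leaving precisely $\delta_Y\big((p_{Y^f}\circ\xi)^*\widehat f^* b - F_\Theta\big) = 0$, where now $\delta_Y$ is the \v{C}ech differential for the submersion $\zeta = p_Y \circ \xi \colon Z \to Y$. The only care needed here is bookkeeping: tracking which fiber product each form lives on and checking that the two notions of \v{C}ech differential ($\delta$ over $\xi$ versus $\delta_Y$ over $\zeta$) are compatible under the projection $Z \to Y \times_M Y^f \to Y$.

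For part (2), I would invoke descent along the surjective submersion $\zeta\colon Z \to Y$: a differential form $\omega$ on $Z$ is of the form $\zeta^* b'$ for a (necessarily unique) $b' \in \Omega^2(Y)$ if and only if $\delta_Y \omega = 0$ on $Z \times_Y Z$. Applying this with $\omega = (p_{Y^f}\circ\xi)^*\widehat f^* b - F_\Theta$, part (1) gives exactly the cocycle condition, so the desired $b'$ exists and is unique, and it satisfies $(p_{Y^f}\circ\xi)^* b' = (p_{Y^f}\circ\xi)^*\widehat f^* b - F_\Theta$, i.e.\ $(p_{Y^f}\circ\xi)^*\widehat f^* b = (p_{Y^f}\circ\xi)^* b' + F_\Theta$ on $Z$. (One should note that $(p_{Y^f}\circ\xi)^*$ in the statement of (2) is a typo for $\zeta^* = (p_Y\circ\xi)^*$; I would either flag this or just write $\zeta^*$ throughout.)

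For part (3), I first check $b'$ is a curving, i.e.\ $\delta_Y b' = F_A$ on $Y^{[2]}$. Since $\zeta\colon Z \to Y$ is a surjective submersion, $\zeta^{[2]}\colon Z\times_Y Z \to Y\times_M Y = Y^{[2]}$ is too, so it suffices to verify the pullback identity $(\zeta^{[2]})^*(\delta_Y b') = (\zeta^{[2]})^* F_A$; but $(\zeta^{[2]})^*\delta_Y b' = \delta_Y(\zeta^* b') = \delta_Y\big((p_{Y^f}\circ\xi)^*\widehat f^* b - F_\Theta\big)$ pulled through, and expanding via $\delta_Y(\widehat f^* b)$-pullback $= $ pullback of $F_{A^f}$ together with the $\Psi$-curvature identity recovers $F_A$ after the $F_\Theta$-terms reorganize — this is essentially the same computation as in (1) read one level down. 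Finally, for the curvature: $\pi^* H_{b'} = \dd b'$ on $Y$ by definition, so $\zeta^* \pi^* H_{b'} = \dd(\zeta^* b') = \dd\big((p_{Y^f}\circ\xi)^*\widehat f^* b - F_\Theta\big) = (p_{Y^f}\circ\xi)^*\widehat f^*\dd b$, using that $F_\Theta$ is closed. Now $\dd b = \pi^* H_b$, and $\widehat f^*\pi^* H_b = (\pi\circ\widehat f)^* H_b = (f\circ\pi^f)^* H_b = (\pi^f)^* f^* H_b$; chasing through $p_{Y^f}\circ\xi$ this equals $\zeta^*\pi^*(f^* H_b)$ via the commutative square relating $\pi$, $\pi^f$, $f$, $\widehat f$ and the base map $Z \to M$. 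Since $Z \to M$ is a surjective submersion, $\zeta^*\pi^*$ is injective on forms on $M$, so we may cancel it to get $H_{b'} = f^* H_b$.

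The main obstacle I anticipate is purely organizational rather than conceptual: the diagram is large (the master commutative diagram involves $Z$, $Y_1\times_M Y_2$, several projections $\varrho_i$, $\mathfrak z_i$, $p_{Y_i}$, and the pullback square for $f$), and the argument hinges on correctly identifying the \v{C}ech differential $\delta_Y$ for $\zeta\colon Z\to Y$ inside the \v{C}ech differential $\delta$ for $\xi\colon Z \to Y\times_M Y^f$ — i.e.\ on the fact that the two "legs" $\mathfrak z_1, \mathfrak z_2$ of the nerve of $\xi$, composed with $p_Y$, give exactly the two legs of the nerve of $\zeta$. Getting this matching right, and correctly extracting the curvature identity encoded in the connection-preserving isomorphism $\Psi$ over $Z^{[2]}$, is where all the actual content sits; once that is pinned down, (1)–(3) follow by the descent lemma and routine differential-form manipulation.
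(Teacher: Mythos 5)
Your proposal follows essentially the same route as the paper's proof: part (1) by restricting the curvature identity encoded in the connection-preserving isomorphism $\Psi$ to the \v{C}ech nerve of $\zeta$ and using that $b$ and $\widehat{f}^*b$ are curvings; part (2) by descent of $2$-forms along the surjective submersion $\zeta$; part (3) by the same computation one simplicial level down, plus identification of the curvature. Your explicit derivation of $H_{b'}=f^*H_b$ from $\dd(\zeta^*b')$ and closedness of $F_\Theta$ is a slightly more hands-on variant of the paper's appeal to invariance of gerbe curvature under isomorphism, and your reading of the misprint in (2) (the pullback of $b'$ should be along $p_Y\circ\xi=\zeta$) agrees with the formula stated in the paper just before the proposition.

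One piece of bookkeeping in your sketch of (1) is off, and it is exactly the point you yourself identify as carrying the content. The isomorphism $\Psi$, hence the curvature identity $\varrho_1^*F_A+\mathfrak{z}_2^*F_\Theta=\mathfrak{z}_1^*F_\Theta+\varrho_2^*F_{A^f}$, lives on $Z\times_M Z$, not on the nerve of $\xi$ (which is the smaller fibre product $Z\times_{Y\times_M Y^f}Z$, where both $\cP$- and $\cP^f$-pullbacks become diagonal and the identity degenerates); and the statement that the two legs of the nerve of $\xi$ composed with $p_Y$ give the legs of the nerve of $\zeta$ is not correct as written. What is needed is the inclusion $Z\times_Y Z\subset Z\times_M Z$, under which $\mathfrak{z}_1,\mathfrak{z}_2$ restrict to the legs $q_1,q_2$ of the nerve of $\zeta$. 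On $Z\times_Y Z$ the mechanism is then not a pairwise cancellation of ``$F_A$ against $F_{A^f}$'': the two $F_{A^f}$-contributions (one from $\delta_Y$ applied to $(p_{Y^f}\circ\xi)^*\widehat{f}^*b$, one from the restricted $\Psi$-identity applied to $\delta_Y F_\Theta$) cancel each other, while the $F_A$-contribution vanishes by itself, because $\varrho_1$ restricted to $Z\times_Y Z$ factors through the diagonal $\Delta_Y\colon Y\to Y^{[2]}$ and $\Delta_Y^*F_A=\Delta_Y^*(\pi_2^*b-\pi_1^*b)=0$. This is precisely the underbraced vanishing term isolated in the paper's computation; with that step supplied, your argument for (1), and hence for (2) and (3), goes through as in the paper.
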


\begin{proof}
For (1):
The differential form $\delta ((p_{Y^f} \circ \xi)^* \widehat{f}^* b - F_\Theta)$ is defined on the fiber product $Z \times_Y Z$.
There is a commutative square of smooth maps:
\begin{equation}
    \begin{tikzcd}[column sep = 2cm]
        Z \times_Y Z \ar[r, hookrightarrow] \ar[d]
        & Z \times_M Z \ar[d]
        \\
        (Y^f)^{[2]} \times_M Y \ar[r, "\mathrm{Id} \times \Delta_Y"] \ar[d]
        & (Y^f)^{[2]} \times_M Y^{[2]} \ar[d]
        \\
        Y \ar[r, "\Delta_Y"]
        & Y^{[2]}
    \end{tikzcd}
\end{equation}
Denoting the projection $Z \times_Y Z \to Z$ to the $i$-th factor by $q_i$, we compute:
\begin{align}
    \delta_Y \big( (p_{Y^f} \circ \xi)^* \widehat{f}^* b - F_\Theta \big)
    &= q_2^* \big( (p_{Y^f} \circ \xi)^* \widehat{f}^* b - F_\Theta \big)
    - q_1^* \big( (p_{Y^f} \circ \xi)^* \widehat{f}^* b - F_\Theta \big)
    \\
    &= \zeta^* \delta_M \widehat{f}^* b - (\delta_M F_\Theta)_{|Z \times_Y Z}
    \\
    &= \zeta^* \delta_M \widehat{f}^* b - (p_{Y^f}^{[2]*} F_{\cQ^f} - \underbrace{\Delta_Y^* p_Y^{[2]*} F_\cQ}_{=0})
    \\
    &= \zeta^* \delta_M \widehat{f}^* b - p_{Y^f}^{[2]*} F_{\cQ^f}
    \\
    &= 0
\end{align} 
For (2): This is now a direct consequence of part~(1) and the fact that for any surjective submersion $h \colon N \to N'$ between manifolds, the induced map $h^* \colon \Omega^k(N') \to \Omega^k(N)$ is injective.

For (3): It remains to show that the $2$-form $b'$ satisfies $\delta b' = F_A$ on $Y^{[2]}$.
To that end, we again consider the surjective submersion $\zeta \colon Z \to Y$.
This induces injective maps $\zeta^* \colon \Omega^2(Y) \to \Omega^2(Z)$ and $\zeta^{[2]*} \colon \Omega^2(Y^{[2]}) \to \Omega^2(Z^{[2]})$, and we have a commutative square:
\begin{equation}
    \begin{tikzcd}[column sep = 1.5cm, row sep = 1cm]
        Z^{[2]} \ar[r, shift left = 0.05cm, "p_1"] \ar[r, shift left = -0.05cm, "p_2"'] \ar[d, "\zeta^{[2]}"]
        & Z \ar[d, "\zeta"]
        \\
        Y^{[2]} \ar[r, shift left = 0.05cm, "p_1"] \ar[r, shift left = -0.05cm, "p_2"']
        & Y
    \end{tikzcd}
\end{equation}
Thus, it suffices to show that:
\begin{equation}
    \zeta^{[2]*} (p_2^*b' - p_1^*b') = \zeta^{[2]*} F_A
\end{equation}
on $Z^{[2]}$.
We compute:
\begin{align}
    \zeta^{[2]*} (p_2^*b' - p_1^*b')
    &= p_2^* \zeta^* b' - p_1^* \zeta^*b'
    \\
    &= p_2^* \big( (p_{Y^f} \circ \xi)^* \widehat{f}^* b - F_\Theta \big)
        - p_1^* \big( (p_{Y^f} \circ \xi)^* \widehat{f}^* b - F_\Theta \big)
    \\
    &= (p_{Y^f} \circ \xi)^{[2]*} F_{A^f} + (p_2^*F_\Theta - p_1^* F_\Theta)
    \\
    &= \zeta^{[2]*} F_A
\end{align}
Here we have used the commutativity of the above diagram in the first identity, claim~(1) in the second, in the third identity that $\widehat{f}^*b$ is a curving on the pullback bundle gerbe $(\cG^f, A^f, \pi^f)$, and in the last that $(\cQ, \Theta, \xi, \Psi)$ is a morphism of bundle gerbes with connective structure.
Finally, the claim regarding the curvatures follows because isomorphic bundle gerbes with connection have the same curvature, and the curvature of the pullback bundle gerbe $(\cG^f, A^f, \pi^f)$ with curving $\widehat{f}^*b$ is $f^*H_b$.
That completes the proof.
\end{proof}

We readily observe that if $(\cQ, \Theta, \xi, \Psi, f) \to (\cQ', \Theta', \xi', \Psi', f)$ is any isomorphism of bundle gerbe automorphisms (necessarily over the same diffeomorphisms), then, given a curving $b$ on $(\cG, A, \pi)$, these two automorphisms induce the same new curvings $b'$ on $(\cG, A, \pi)$. Indeed, this follows because only the field strength $F_\Theta$ enters in the form which descends to $Y$ to give the new curving $b'$, and isomorphic bundle gerbe automorphisms have identical field strength $2$-forms once pulled back to the total space $W$ of a representative of the isomorphism. By a similar argument, we also infer that the induced curving $b'$ is compatible with the composition of automorphisms.

\begin{thm}
\label{thm:action}
    There is a canonical \emph{right} action of $\Aut(\cP,A,\pi,\mu)$ on the affine space of curvings $\cC(\cP,A,\pi,\mu)$ on $(\cP,A,\pi,\mu)$ via Proposition~\ref{prop:general action on curvings}:
    \begin{equation*}
        \cC(\cP,A,\pi,\mu) \times \Aut(\cP,A,\pi,\mu) \to \cC(\cP,A,\pi,\mu)
        % \qquad (\cQ,\Theta, \Psi,f,b)  \mapsto (\cQ,\Theta, \Psi,f)\cdot b = \bar{f}^{\ast} b  - \Delta_{\pi}^{\ast} F_{\Theta} 
    \end{equation*}
\end{thm}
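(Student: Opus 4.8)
The plan is to build the action map directly from Proposition~\ref{prop:general action on curvings}, which already provides, for each automorphism $\mathfrak{a} = (\cQ,\Theta,\xi,\Psi,f)$ and each curving $b \in \cC(\cP,A,\pi,\mu)$, a well-defined new curving $b' \in \cC(\cP,A,\pi,\mu)$. Writing $b \cdot \mathfrak{a} := b'$, the content of the theorem is that this assignment (i) is insensitive to the choice of representative within a $2$-isomorphism class, (ii) respects the identity automorphism, and (iii) is compatible with composition in the appropriate order to make it a genuine \emph{right} action. Point (i) is exactly the observation recorded immediately after the proof of Proposition~\ref{prop:general action on curvings}: the $2$-form whose descent along $\zeta = p_Y\circ\xi$ produces $b'$ depends only on the curvature $F_\Theta$ and on $\widehat f^* b$, and isomorphic automorphisms have $2$-isomorphic underlying $\U(1)$-bundles, hence identical curvatures once pulled back to the total space $W$ of a representative; since $\zeta$-descent is unique when it exists, $b'$ is unchanged. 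So the map descends to a well-defined map on $2$-isomorphism classes, i.e.\ on objects of the groupoid $\Aut(\cP,A,\pi,\mu)$.

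For the identity, take $\mathfrak{a} = \mathrm{id}$: one may represent it by $f = \mathrm{id}_M$, $Y^f = Y$, $\widehat f = \mathrm{id}_Y$, $Z = Y\times_M Y$, $\xi = \mathrm{id}$, and $\Theta$ the trivial connection on the trivial bundle, so $F_\Theta = 0$. The defining equation of Proposition~\ref{prop:general action on curvings}(2) then reads $(p_{Y}\circ\xi)^* b = (p_Y\circ\xi)^* b'$, i.e.\ $p_2^* b = p_2^* b'$ on $Y\times_M Y$; injectivity of pullback along the surjective submersion $p_2$ gives $b' = b$, so $b\cdot\mathrm{id} = b$.

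For compatibility with composition, let $\mathfrak{a}_1 = (\cQ_1,\Theta_1,\xi_1,\Psi_1,f_1)$ and $\mathfrak{a}_2 = (\cQ_2,\Theta_2,\xi_2,\Psi_2,f_2)$, with composite $\mathfrak{a}_2\circ\mathfrak{a}_1$ covering $f_2\circ f_1$. Set $b_1 = b\cdot\mathfrak{a}_1$ and $b_2 = b_1\cdot\mathfrak{a}_2$; I want $b_2 = b\cdot(\mathfrak{a}_2\circ\mathfrak{a}_1)$, which is the statement that it is a \emph{right} action (reading composition of automorphisms left-to-right as $\mathfrak{a}_1$ then $\mathfrak{a}_2$, the curvings are acted on first by $\mathfrak{a}_1$, then by $\mathfrak{a}_2$). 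The strategy is to choose a common refinement over which all the relevant submersions and bundles can be compared simultaneously: the composite stable isomorphism is built on a space $Z_{21}$ refining $Z_1 \times_{Y_1^{f_1}} (\text{pullback of } Z_2)$, carrying the tensor-product bundle $\cQ_1 \otimes (\text{pullback of }\cQ_2)$ with the tensor-product connection, whose curvature is the sum $F_{\Theta_1} + (\text{pullback of }F_{\Theta_2})$. Pulling the defining equations for $b_1$ and $b_2$ back to $Z_{21}$ and adding them telescopes the $\widehat f$-pullbacks of $b$ — using $\widehat{f_2\circ f_1}^* = \widehat{f_1}^* \circ (\text{pullback of }\widehat{f_2}^*)$ along the Cartesian squares defining the pullback submersions — and the curvature terms add up exactly to the curvature of the composite isomorphism's bundle. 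This exhibits $b_2$ as satisfying the defining equation of Proposition~\ref{prop:general action on curvings}(2) for $\mathfrak{a}_2\circ\mathfrak{a}_1$; uniqueness in that proposition then forces $b_2 = b\cdot(\mathfrak{a}_2\circ\mathfrak{a}_1)$.

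The main obstacle is purely bookkeeping: making precise the common refinement $Z_{21}$ for the composite stable isomorphism (this is the standard composition of bundle-gerbe isomorphisms of~\cite{Waldorf}, but one must track the connective structures and the induced maps $\widehat{f_i}$ through the iterated fiber products), and checking that the curvature of the composite bundle is the sum of the pulled-back curvatures under these identifications. Once the refinement is set up, every step is an application of naturality of pullback, injectivity of $h^*$ for surjective submersions $h$, and the uniqueness clause of Proposition~\ref{prop:general action on curvings}; no genuinely new analytic or geometric input is needed. I would also remark in passing that the action is by \emph{affine} maps on $\cC(\cP,A,\pi,\mu)$, since on the difference of two curvings (a genuine $2$-form $b_0 \in \Omega^2(M)$) the action reduces to $f^* b_0$, the curvature terms cancelling; this is not needed for the theorem but clarifies its structure and will be used later.
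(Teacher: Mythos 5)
Your overall strategy --- descend $\widehat{f}^{\,*} b - F_\Theta$ along $\zeta$, invoke the uniqueness clause of Proposition~\ref{prop:general action on curvings}, and check invariance under $2$-isomorphisms, the identity, and composition --- is the same as the paper's (the paper only sketches these verifications in the paragraph preceding the theorem). However, your composition step has the order backwards, and this is precisely the point where the word \emph{right} in the statement is earned. Since the action is of pullback type (Proposition~\ref{prop:general action on curvings}(3) gives $H_{b\cdot\mathfrak{a}} = f^* H_b$), iterating yields $H_{(b\cdot\mathfrak{a}_1)\cdot\mathfrak{a}_2} = f_2^* f_1^* H_b = (f_1\circ f_2)^* H_b$, whereas the composite you construct on $Z_{21}$ (namely $\mathfrak{a}_1$ followed by the $f_1$-pullback of $\mathfrak{a}_2$) covers $f_2\circ f_1$, so $b\cdot(\mathfrak{a}_2\circ\mathfrak{a}_1)$ has curvature $(f_2\circ f_1)^* H_b$. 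These already disagree at the level of curvature $3$-forms whenever $f_1$ and $f_2$ do not commute, so your claimed identity $(b\cdot\mathfrak{a}_1)\cdot\mathfrak{a}_2 = b\cdot(\mathfrak{a}_2\circ\mathfrak{a}_1)$ is false; that is the axiom of a \emph{left} action written on the right. The correct, order-reversing compatibility is $(b\cdot\mathfrak{a}_2)\cdot\mathfrak{a}_1 = b\cdot(\mathfrak{a}_2\circ\mathfrak{a}_1)$, and contravariance of pullback is exactly why the action is a right one. Concretely, your telescoping does not close on your $Z_{21}$: substituting the defining equation of $b_1 = b\cdot\mathfrak{a}_1$ into that of $b_2 = b_1\cdot\mathfrak{a}_2$ forces you to pull the first equation back along a map covering $f_2$, so the $b$-pullback you produce covers $f_1\circ f_2$, which does not match the defining equation of the composite you built. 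The argument is repaired by exchanging the roles of $\mathfrak{a}_1$ and $\mathfrak{a}_2$: pull the defining equation of $b\cdot\mathfrak{a}_2$ back along the $f_1$-pullback of $Z_2$, combine with the $Z_1$-equation on a refinement of $Z_1\times_{Y^{f_1}} f_1^* Z_2$, and conclude by uniqueness --- but as written the step fails.

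A smaller issue: your representative of the identity automorphism (trivial bundle with trivial connection over $Z = Y\times_M Y$, so $F_\Theta = 0$) does not in general underlie an automorphism at all, since the required connection-preserving $\Psi$ would trivialize $(\cP, A)$ over $Y^{[2]}$. The identity $1$-morphism is represented by $(\cQ,\Theta) = (\cP, A)$ with $\Psi$ induced by $\mu$, hence $F_\Theta = F_A \neq 0$ in general; the equality $b\cdot\mathrm{id} = b$ then follows not from vanishing of $F_\Theta$ but from the curving condition $p_2^* b - p_1^* b = F_A$ together with injectivity of pullback along surjective submersions (alternatively, from your point (i) applied to any representative $2$-isomorphic to this one).
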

This is an action by a $2$-group on a set, and so necessarily factors through an ordinary group action:
\begin{equation*}
\cC(\cP,A,\pi,\mu)\times \pi_0\Aut(\cP,A,\pi,\mu)\to \cC(\cP,A,\pi,\mu)
\end{equation*}
by the group of connected components (i.e.~isomorphism classes) of $\Aut(\cP,A,\pi,\mu)$. That is also consistent with the discussion above of the action of isomorphic automorphisms.\medskip

Having thus found the action of the automorphism 2-group on the set of curvings on a given bundle gerbe, we now derived simplified presentations of this action in cases where the diffeomorphism $f$ admits a lift to the total space $Y$ of the surjective submersion defining which is part of the data of the bundle gerbe. We will use this simplified case in the next subsection to propose a notion of \emph{smooth family} of automorphisms of a bundle gerbe. In particular, at least in open neighborhoods in the manifold, this allows us to reduce the general description of the action on curvings to formulas familiar from the supergravity literature. To that end, we observe that the pull-back $(\cP^f,A^f,\pi^f)$ of $(\cP,A,\pi,\mu)$ by a diffeomorphism $f\colon M\to M$ fits into the following commutative diagram.  
\begin{center}
\begin{tikzcd}[row sep=2.5em, column sep=3.5em]
% Row 1
((p_Y^{[2]}\circ i^{[2]})^{\ast} \cP, (p_Y^{[2]}\circ i^{[2]})^{\ast}A) \arrow[r] \arrow[d] & (p_Y^{[2]\ast}\cP, p_Y^{[2]\ast}A) \arrow[r] \arrow[d] & (\cP, A) \arrow[d] \\
% Row 2: Connections below are now pairs of arrows
Y \times_{f^{-1} \circ \pi } Y \arrow[r, "i^{[2]}"] \arrow[d, shift left=0.7ex] \arrow[d, shift right=0.7ex] & f^*Y \times_{p_M} f^*Y \arrow[r, "p_Y^{[2]}"] \arrow[d, shift left=0.7ex] \arrow[d, shift right=0.7ex] & Y \times_{\pi} Y \arrow[d, shift left=0.7ex] \arrow[d, shift right=0.7ex] \\
% Row 3
Y \arrow[r, "i"] \arrow[dr, "f^{-1} \circ \pi "'] & f^*Y \arrow[r, "p_Y"] \arrow[d, "p_M"'] & Y \arrow[d, "\pi"] \\
% Row 4
& M \arrow[r, "f"] & M
\end{tikzcd}
\end{center}

\noindent
In particular, this establishes the surjective submersion $f^{-1} \circ \pi \colon Y \to M$ as a representative of the pullback of $\pi \colon Y \to M$ along $f$.

\noindent
Hence $(\cP^f,A^f) := ((p_Y^{[2]}\circ i^{[2]})^{\ast}\cP , (p_Y^{[2]}\circ i^{[2]})^{\ast}A)$ by definition and, since $p_Y \circ i$ is the identity on $Y$, we also have a canonical isomorphism:
\begin{equation*}
((p_Y^{[2]}\circ i^{[2]})^{\ast} \cP, (p_Y^{[2]}\circ i^{[2]})^{\ast}A) = (p_Y^{[2]\ast}\cP, p_Y^{[2]\ast}A)
\end{equation*}

\noindent
Furthermore, for every element $ (y_1 ,y_2) \in Y \times_{f^{-1} \circ \pi } Y$ we have:
\begin{equation*}
((p_Y^{[2]}\circ i^{[2]})^{\ast} \cP, (p_Y^{[2]}\circ i^{[2]})^{\ast}A)\vert_{(y_1 , y_2)} =    (  \cP, A)\vert_{(p_Y^{[2]}\circ i^{[2]}) (y_1 , y_2)} =  (  \cP, A)\vert_{(y_1 , y_2)}
\end{equation*}

\noindent
Hence, by the previous diagram together with the fact that $Y\times_{f^{-1} \circ \pi } Y = Y\times_{\pi} Y$ as manifolds (note that their maps to $M$ differ), we can consider the pull-back bundle gerbe $(\cP^f,A^f , \pi^f)$ as a bundle gerbe of the form:
\begin{equation*}
(\cP, A) \to Y\times_{\pi} Y \rightrightarrows Y \xrightarrow{f^{-1} \circ \pi} M
\end{equation*}

\noindent
Using this model for the pull-back bundle gerbe, we have automorphisms $(\cQ,\Theta, \xi, \Psi,f)\in \Aut(\cP,A,\pi,\mu)$ fitting into the following commutative diagram:
 
\begin{center}
\begin{tikzcd}[row sep=3.5em, column sep=2.5em]
	% Row 1
	& & (Y \times^{f}_{\pi} Y) \times_M (Y \times^{f}_{\pi} Y)
	    \arrow[to=3-2, "\varrho_1", out=-160, in=120, sloped, above]
	    \arrow[to=3-4, "\varrho_2", out=-20, in=60, sloped]
	    \arrow[dd, "t", bend left=80, crossing over]
	    \arrow[dd, "s"', bend right=80, crossing over]
	& & \\
	% Row 2
	& & (Q,\Theta) \arrow[d] & & \\
	% Row 3: Arrows from A and B are now pairs
	(\cP, A) \arrow[r] & Y\times_{\pi} Y  \arrow[d, shift left=0.7ex] \arrow[d, shift right=0.7ex] & Y \times^{f}_{\pi} Y \arrow[dl, "p_1"'] \arrow[dr, "p_2"] & Y\times_{f^{-1} \circ \pi } Y \ \arrow[d, shift left=0.7ex] \arrow[d, shift right=0.7ex] & (\cP, A)  \arrow[l] \\
	% Row 4
	& Y \arrow[dr, "\pi"'] & & Y \arrow[dl, "f^{-1} \circ \pi "] & \\
	% Row 5
	& & M & &
\end{tikzcd}
\end{center}

\noindent
Here, the surjective submersion $Y \times^{f}_{\pi} Y \to M$ is defined as the fibered product of the surjective submersions $\pi \colon Y \to M$ and $f^{-1} \circ \pi  \colon Y \to M$, that is:
\begin{equation*}
Y \times^{f}_{\pi} Y = \left\{ (y_1 , y_2) \in Y\times Y \,\, \vert\,\, f(\pi(y_1)) = \pi(y_2)\right\}
\end{equation*}

\noindent
Note that, for simplicity, here we have set $Z = Y \times^{f}_{\pi} Y$ and taken the surjective submersion $\xi\colon Z \to Y \times^{f}_{\pi} Y$ to be the identity map. Every automorphism of $(\cP,A,\pi,\mu)$ is of this form up to 2-isomorphism~\cite{Waldorf0,Waldorf}. By applying Equation \eqref{eq:relationbs} to this specific case, we conclude that two curvings $b_1$ and $b_2$ on $(\cP,A,\pi,\mu)$ are related by an isomorphism $(\cQ,\Theta, \Psi,f) \colon (\cP, A, \pi, \mu) \to (\cP, A, \pi, \mu)$ if and only if:
\begin{equation} 
\label{eq:brelations2}
p_{2}^{\ast} b_2 = p_{1}^{\ast} b_1 + F_{\Theta} \in \Omega^2(Y \times^{f}_{\pi} Y)
\end{equation}

\noindent
Let $s\colon U\subset M \to Y$ be a local section of $\pi \colon Y\to M$ defined on a contractible open subset $U\subset M$. This canonically induces a local section of the surjective submersion $f \circ \pi \colon Y \to M$, which is defined over the open subset $f^{-1}(U)$ of $M$.
Restricting to the case where the diffeomorphism $f$ preserves the open subset $U$, it follows that the two local sections and $f \circ s$ are now defined over the same open subset $U = f^{-1}(U)$, and we obtain a canonical section:
\begin{equation*}
s_f^{[2]}   \colon U \to Y\times^f_{\pi} Y\, , \qquad m\mapsto (s(m), f(s(m)))
\end{equation*}

\noindent
We would like to emphasize two observations at this point.
First, pulling-back Equation \eqref{eq:brelations2} by $s_f^{[2]}   \colon U \to Y\times^f_{\pi} Y$ we obtain:
\begin{equation*}
f^{\ast}(s^{\ast}b_2) = s^{\ast}b_1 + \dd \theta\in \Omega^2(U)    
\end{equation*}

\noindent
for a local one-form $\theta\in \Omega^1(U)$. Hence, we recover the well-known local transformation of a $b$-field under a combined local diffeomorphism and gauge transformation. In particular, if $b_1$ and $b_2$ are curvings on $(\cP,A,\pi,\mu)$ related by $(Q,\Theta,\Psi,f)$ then it follows that $f^{\ast} H_{b_2} = H_{b_1}$, as expected.
Second, this generally accepted formula holds because we have locally trivialized a global geometric structure which holds much more information than just the 2-form in this formula, and we have had to restrict to diffeomorphisms that preserve the support of this local trivialization, which in general singles out only a small subgroup of the diffeomorphisms of $M$.
The global perspective via (bundle) gerbes we employ and advocate here alleviates both of these restrictions simultaneously.

We now suppose that $f$ admits a lift , that is, assume that there exists a smooth map $\bar{f}\colon Y\to Y$ fitting into the following commutative diagram:
\begin{center}
\begin{tikzcd}[row sep=3.5em, column sep=2.5em]
Y \arrow[r, "\bar{f}"] \arrow[d, "\pi"'] & Y \arrow[d, "\pi"] \\
M \arrow[r, "f"] & M
\end{tikzcd}
\end{center}

\begin{remark}
    The existence of a lift $\bar{f}\colon Y\to Y$ as above is generally obstructed \cite{Meigniez2002}. However, every bundle gerbe is (stably) isomorphic to a bundle gerbe where $ \pi \colon Y \to M$ is a $\mathrm{PU}(H)$-principal bundle, for $\mathrm{PU}(H)$ the projective unitary group on an infinite-dimensional Hilbert space. Indeed, $\mathrm{PU}(H)$ is a model for the Eilenberg-MacLane space $K(\mathbb{Z},3)$, and so for every isomorphism class of gerbes there is a corresponding isomorphism class of $\mathrm{PU(H)}$-bundles. To each $\mathrm{PU}(H)$-bundle there is an associated \textit{lifting bundle gerbe}, which presents the same class in $H^3(M, \mathbb{Z})$. Thus, if we allow the total space $Y$ of the surjective submersion to be a possibly $\infty$-dimensional manifold, then every gerbe on $M$ is isomorphic to one for which any diffeomorphism $f \in \mathrm{Diff}(M)$ connected to the identity admits a lift to $Y$. If the gerbe has torsion class in $H^3(M, \mathbb{Z})$, then we may even replace $\mathrm{PU}(H)$ by the finite-dimensional Lie group $\mathrm{PU}(n)$ for some $n \in \mathbb{N}$ (see, for instance,~\cite{Karoubi}).
\end{remark}

We will assume the existence of a lift $\bar{f}$ from this point on. This restricts our treatment to a subclass of (families of) bundle gerbe automorphisms; however, as we will see below, this already leads to a rich theory of higher geometric flows on gerbes. For a full treatment of smooth families of gerbe automorphisms (and substantially more generally morphisms of higher geometric structures on manifolds), we refer the reader to~\cite{BunkShahbazi}. On the other hand, there are cases, for instance, when $\pi \colon Y\to M$ is a universal cover, for which this assumption does not imply any loss of generality, since every diffeomorphism connected to the identity admits a lift in this case.\medskip 

Assuming the existence of a lift $\bar{f} \colon Y \to Y$ of the diffeomorphism $f \colon M \to M$, we propose a more convenient representative for every automorphism \emph{covering} $f$. Using the notation $(\cQ,\Theta, \xi, \Psi,f)\in \Aut(\cP,A,\pi,\mu)$ introduced above for the automorphisms of $(\cP,A,\pi,\mu)$, instead of taking $\xi$ to be the identity as in the previous diagram, we set:
\begin{eqnarray*}
\xi \colon Z = Y\times_{\pi} Y\to Y\times^{f}_{\pi} Y\, , \qquad (y_1 , y_2) \mapsto (y_1 , \bar{f}(y_2))
\end{eqnarray*}

\noindent
With this choice we obtain the following commutative diagram for $(\cQ,\Theta, \xi, \Psi,f)$:

\begin{center}
% You may want to adjust or remove this hspace command now that the diagram is narrower
%\hspace*{-2.2cm}
\begin{tikzcd}[row sep=3.5em, column sep=2.5em]
	% Row 1
	& & (Y\times_{\pi} Y) \times_{\pi} (Y\times_{\pi} Y)
	    \arrow[dd, bend left=80, "\mathfrak{z}_{2}" pos = 0.65]
	    \arrow[dd, bend right=80, "\mathfrak{z}_{1}"' pos = 0.65]
	    % Row numbers updated from 6 to 5
	    \arrow[to=5-2, "\varrho_1", out=-150, in=120, sloped, above, crossing over]
	    \arrow[to=5-4, "\varrho_2", out=-30, in=60, sloped, above, crossing over]
	& & \\
	% Row 2
	& & (Q,\Theta) \arrow[d, "\pi_Q"] & & \\
	% Row 3
	& & Y\times_{\pi} Y \arrow[d, "\mathrm{Id}\times \bar{f}"] & & \\
	% Row 4
	& & Y \times_{\pi}^f Y \arrow[ddl, "p_{1}"' description] \arrow[ddr, "p_{2}" description] & & \\
	% Row 5
	(\cP, A) \arrow[r] & Y \times_{\pi} Y \arrow[d, shift left=.7ex] \arrow[d, shift right=.7ex] & & Y \times_{f^{-1} \circ \pi } Y \arrow[d, shift left=.7ex] \arrow[d, shift right=.7ex] & (\cP, A) \arrow[l] \\
	% Row 6
	& Y \arrow[dr, "\pi"', end anchor={[xshift=-1ex]}] & & Y \arrow[dl, "f^{-1} \circ \pi ", end anchor={[xshift=1ex]}] & \\
	% Row 7
	& & M & &
\end{tikzcd}
\end{center}

\noindent
Note that for automorphisms $(\cQ,\Theta, \xi, \Psi,f)$ fitting into the previous diagram, the map $\xi$ is canonically determined by the lift $\bar{f}$. Hence, we can omit it assuming that the lift $\bar{f}$ is given, and consequently, we may denote such an automorphism simply by a tuple of the form $(\cQ,\Theta, \Psi,f)$. Equation \eqref{eq:relationbs} gives the following formula for curvings that are related by such $(\cQ,\Theta, \Psi,f)$:
\begin{equation*}
    (p_2\circ (\mathrm{Id}\times\bar{f}))^{\ast} b_2 = (p_1\circ (\mathrm{Id}\times\bar{f}))^{\ast} b_1 + F_{\Theta} \in \Omega^2(Y\times_{\pi} Y)
\end{equation*}

\noindent
Note that this is an equation for two-forms defined on $Y\times_{\pi} Y$, in contrast to Equation \eqref{eq:brelations2}, which is an equation for two-forms on $Y\times_{\pi}^f Y$.  Evaluating this equation at $(y,y) \in Y \times_{\pi} Y$, we obtain:
\begin{equation}
\label{eq:relationbY}
\bar{f}^{\ast} b_2  = b_1 + \Delta_{\pi}^{\ast} F_{\Theta} \in \Omega^2(Y)
\end{equation}

\noindent
as an equation for two-forms on $Y$, where $\Delta_{\pi} \colon Y \to Y\times_{\pi} Y$ denotes the standard diagonal embedding. We consider now a local section $s\colon U\to Y$ defined on a contractible open set $U$ preserved by $f$, and from this define the following local section:
\begin{equation*}
s_f := \bar{f}^{-1} \circ s\circ f \colon U \to M
\end{equation*}

\noindent
Pulling back Equation \eqref{eq:relationbY} by $s_f \colon U \to M$, we obtain:
\begin{equation*}
    f^{\ast} s^{\ast}b = s_f^{\ast}b + s^{\ast}_f \Delta_{\pi}^{\ast} F_{\Theta} 
\end{equation*}

\noindent
On the other hand, for every $m\in M$ we have:
\begin{equation*}
s_f^{\ast}b \vert_m = b\vert_{\bar{f}^{-1}s(f(m))} = b\vert_{s(m)} + \cF_{A}\vert_{(s(m),\bar{f}^{-1}s(f(m)))}
\end{equation*}

\noindent
and hence, as expected, we conclude:
\begin{equation*}
    f^{\ast} s^{\ast}b = s^{\ast}b + \dd \theta
\end{equation*}

\noindent
for a certain local one-form $\theta \in \Omega^1(U)$.

\begin{remark}
We conclude this section by remarking that a diffeomorphism $f$ isotopic to the identity on $M$ admits a lift $\bar{f}$ along the map $\pi \colon Y \to M$ in many interesting cases, including when $Y$ is any principal bundle on $M$ (or, for instance, the diffeological based path fibration of $M$). This is the case, for example, whenever $Y \to M$ is the universal cover of $M$. If $\pi \colon Y \to M$ is a surjective submersion and $H^3(Y,\mathbb{Z}) = 0$, then any bundle gerbe on $M$ can be defined, up to 1-isomorphism, using the surjective submersion $\pi$: indeed, since bundle gerbes satisfy descent~\cite{Nikolaus}, any gerbe on $M$ can be written as bundle gerbe descent data for $\pi$, but if $H^3(Y,\mathbb{Z})$ is trivial, the gerbe itself can be trivialized over $Y$. In that case, the remaining components of a bundle gerbe descent datum over $\pi$ are the same as the constituents of a bundle gerbe on $M$ defined with respect to $\pi$. In particular, for any 3-manifold whose universal cover is not a 3-sphere, the universal cover admits a surjective submersion (even principal bundle) satisfying the conditions above. Analogous statements hold for $n$-manifolds whose universal cover is, for instance, $\mathbb{S}^n$, purely by cohomological reasons.
\end{remark}

% % % % % % % % % % % % % % % % % % % % % % % % % % % % % % % % % % % % % % 

\subsection{Self-similar flows}

% % % % % % % % % % % % % % % % % % % % % % % % % % % % % % % % % % % % % % 

Let $\cI(\cP,A,\pi,\mu)$ denote the set of initial data on $(\cP,A,\pi,\mu)$, that is, the set of pairs $(g,b)$ consisting of a Riemannian metric on $M$ and a curving on $(\cP,A,\pi,\mu)$. From Definition~\ref{def:NSNSsystem} we readily see that such a pair $(g,b) \in \cI(\cP,A,\pi,\mu)$ is a generalized Ricci flow if and only if:
\begin{equation*} 
\Ric^{g}  = \frac{1}{2} H_{b} \circ_{g} H_{b}\, , \qquad  \nabla^{g\ast}H_{b}  = 0
\end{equation*}

\noindent
Solutions to these equations are \emph{fixed points} of the generalized Ricci flow and can be considered as having \emph{trivial} evolution. There is a more general class of generalized Ricci flows that can be considered as evolving trivially, namely those that evolve via symmetries of the system, which in our framework correspond to families of automorphisms in $\Aut(\cP,A,\pi,\mu)$. We define a \emph{self-similar} generalized Ricci flow as a generalized Ricci flow of the form:
\begin{equation*}
(g_t, b_t)  = (g,b) \cdot (\cQ_t,\Theta_t, \xi_t, \Psi_t,f_t)\, , \qquad (g,b) \in \cI(\cP,A,\pi,\mu)
\end{equation*}

\noindent
for a \emph{smooth} family of automorphisms $(\cQ_t,\Theta_t, \xi_t, \Psi_t,f_t)$ parametrized by an open interval with Cartesian coordinate $t$. Here the \emph{dot} denotes the natural right action of $\Aut(\cP,A,\pi,\mu)$, namely:
\begin{equation}
\label{eq:evolutionauto}
(g_t, b_t)  = (g,b) \cdot (\cQ_t,\Theta_t, \xi_t, \Psi_t,f_t) = (f_t^{\ast} g , \bar{f}^{\ast}_t b  - \Delta_{\pi}^{\ast} F_{\Theta_t} )
\end{equation}

\begin{remark}
    That is, a self-similar flow may be seen as a `homotopy stationary point' of the flow:
    the set of pairs $(g,b)$ of curvings and metrics carries an action of the automorphism 2-group $\cG(\cP,A,\pi,\mu)$ (or rather its group of isomorphism classes).
    This induces a notion of morphisms between curvings.
    That is, the action induces the so-called action groupoid.
    A self-similar flow now consists in specifying, for each time $t$, a value $(g_t, b_t)$ of the flow, which may be different from $(g_0, b_0)$, but at the same time also specifying a morphism $(\cQ_t,\Theta_t, \xi_t, \Psi_t,f_t)$ in the action groupoid which witnesses that the point $(g_t, b_t)$ is still isomorphic---though, in general, no longer equal---to the initial point $(g_0, b_0)$.
\end{remark}

\noindent
There is a smooth theory for the automorphism 2-groups $\Aut(\cP,A,\pi,\mu)$ in~\cite{BunkShahbazi}, developed in a smooth higher-geometric setting (using the language of simplicial and $\infty$-presehaves on cartesian spaces), but here we employ a restricted notion of a smooth family of automorphisms $(\cQ_t,\Theta_t, \xi_t, \Psi_t,f_t)$, which will be sufficient for our purposes. First, we consider $f_t$ to be a smooth isotopy of diffeomorphisms such that $f_0 = \Id$ is the identity on $M$. Secondly, we assume that this isotopy $f_t$ admits a smooth lift $\bar{f}_t \colon Y\to Y$ to an isotopy of $Y$ with $\bar{f}_t = \Id_Y$, which we consider as given. Existence of such a smooth lift is, in general, obstructed, but it always exists in important cases, for instance when $\pi\colon Y \to M$ is the universal cover of $M$. The lift $\bar{f}_t \colon Y\to Y$ fixes, in turn, the smooth map $\xi_t$ as stated in the previous diagram:
\begin{eqnarray*}
\xi_t = \Id \times \bar{f}_t \colon Y \times_{\pi} Y \to Y \times^{f_t}_{\pi} Y
\end{eqnarray*}

\noindent
which can hence be omitted for ease of notation. Since $f_t$ is an isotopy to the identity at $t=0$, it follows that we can choose $\bar{f}_t$ to be an isotopy to the identity at $t=0$, and therefore we may take $Q$ to be a \emph{fixed} trivial $\U(1)$ bundle on $Y \times_{\pi} Y$. Finally, we choose a family of connections $\Theta_t$ and a family of automorphisms:
\begin{equation}
\label{eq:familyPsi}
\Psi_t\colon  (\varrho^{\ast}_1\cP, \varrho^{\ast}_1 A) \otimes (\mathfrak{z}^{\ast}_{(2)} \cQ , \mathfrak{z}^{\ast}_{(2)}\Theta_t ) \to (\mathfrak{z}^{\ast}_{(1)} Q , \mathfrak{z}^{\ast}_{(1)}\Theta_t ) \otimes (\varrho^{\ast}_2\cP, \varrho^{\ast}_2 A)
\end{equation}

\noindent
such that $(\cQ,\Theta_t, \xi_t, \Psi_t,f_t)$ fits into the master diagram for every $t$. This yields a well-defined notion of family of smooth automorphisms $(\cQ,\Theta_t, \xi_t, \Psi_t,f_t)$ of $(\cP,A,\pi,\mu)$, which we require to be the \emph{identity} at $t= 0$, under the assumption that isotopies of $M$ can be lifted to isotopies of $Y$ along $\pi\colon Y \to M$. By this discussion, we also obtain the following result.

\begin{lemma}
\label{lemma:existencePsiTheta}
Let $f_t \colon M\to M$ be an isotopy to the identity at $t=0$ and assume there exists a smooth lift $\bar{f}_t\colon Y\to Y$ of $f_t$ that is an isotopy to the identity. Then, there exists family $\Theta_t$ of connections on $Q$ and a family of automophisms $\Psi_t$ as in \eqref{eq:familyPsi}, such that $(\cQ,\Theta_t, \xi_t, \Psi_t,f_t)$ is a family of automorphisms of $(\cP,A,\pi,\mu)$. 
\end{lemma}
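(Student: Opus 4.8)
The plan is to construct the data $(\Theta_t, \Psi_t)$ explicitly from the given lift $\bar f_t$, reducing everything to the corresponding statement for a single automorphism at each fixed $t$ and then checking that the construction is smooth in $t$. First I would fix, for each $t$, the underlying bundle $\cQ$ to be the trivial $\U(1)$-bundle on $Y\times_\pi Y$ (as permitted by the discussion preceding the lemma, since $\bar f_t$ is an isotopy to the identity). A connection on a trivial bundle is the same as a global $1$-form $a_t\in\Omega^1(Y\times_\pi Y)$, so choosing $\Theta_t$ amounts to choosing such a family of $1$-forms; I would first show that the compatibility constraint forced by the master diagram on $F_{\Theta_t}$ is nothing but the $\delta$-closedness/exactness condition $\delta_{Y\times_\pi Y}(\,\cdots) = 0$ that identifies, via Proposition \ref{prop:general action on curvings}, a unique descended $2$-form on $M$; from this and the fact that the \v{C}ech complex of a surjective submersion is exact in positive simplicial degree, a suitable $a_t$ exists for each $t$. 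The natural choice is $\Theta_t$ trivial (i.e.\ $a_t=0$) unless one wants a nontrivial curving flow, but for the bare existence statement one may simply take $\Theta_t=0$; the point of the lemma is that the remaining datum $\Psi_t$ then exists and is smooth.

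Second, with $\Theta_t$ in hand, I would produce $\Psi_t$. The morphism $\Psi_t$ in \eqref{eq:familyPsi} is an isomorphism of $\U(1)$-bundles with connection over $Z^{[3]}$ (for $Z = Y\times_\pi Y$) satisfying the hexagon/pentagon coherence of the master diagram; since both sides are built from $\cP$ (with its fixed gerbe multiplication $\mu$) and the trivial bundle $\cQ$ with connection $\Theta_t$, the morphism $\Psi_t$ is, after trivializing $\cQ$, a $\U(1)$-valued function on $Z^{[3]}$ whose coboundary is pinned down by $\mu$ and whose compatibility with connections is pinned down by $\Theta_t$. Here I would invoke the general existence theorem for stable isomorphisms between bundle gerbes with the same Dixmier--Douady class: since $(\cP^{f_t}, A^{f_t}, \pi^{f_t})$ is the pullback of $(\cP,A,\pi,\mu)$ along $f_t$, and $f_t$ is isotopic to the identity, the two gerbes have the same class, so a stable isomorphism $(\cQ,\Theta_t,\xi_t,\Psi_t,f_t)$ exists; the content to be verified is only that with the specific presentation $\xi_t = \Id\times\bar f_t$ and $\cQ$ trivial, a $\Psi_t$ fitting that presentation exists. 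This I would get by first choosing any stable isomorphism in the isomorphism class, then transporting it along a $2$-isomorphism to the preferred presentation (using that every automorphism is of this form up to $2$-isomorphism, as noted after \eqref{eq:brelations2}).

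Third, and this is where the actual work lies, I would address smoothness in $t$. The preceding two steps give $(\Theta_t,\Psi_t)$ for each $t$ individually; to get a \emph{smooth family} I would instead run the construction parametrically. Concretely: form the surjective submersion $\bar F\colon Y\times I\to Y\times I$, $(y,t)\mapsto(\bar f_t(y),t)$, over $f\colon M\times I\to M\times I$, treat the whole situation as a single gerbe automorphism of the pullback of $(\cP,A,\pi,\mu)$ to $M\times I$ covering $f$, and observe that at $t=0$ it is the identity automorphism. Since $I$ is contractible, the pullback gerbe on $M\times I$ is isomorphic to the pullback of the gerbe on $M$, and one can choose the stable isomorphism witnessing this to restrict to the identity over $M\times\{0\}$; pulling back to each slice $M\times\{t\}$ yields the desired smooth family with $(\cQ,\Theta_0,\xi_0,\Psi_0,f_0)=\Id$. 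The main obstacle is precisely this parametrized/relative existence statement: one must ensure the stable isomorphism over $M\times I$ can be chosen smooth in $t$ and equal to the identity at $t=0$, which amounts to a relative version of "same Dixmier--Douady class $\Rightarrow$ stably isomorphic" together with a contractibility argument for the space of such isomorphisms; equivalently one solves the parametrized \v{C}ech equations $\delta(\,\cdot\,)=\mu$-data with smooth dependence on $t$, using a partition of unity or a chain homotopy in the \v{C}ech complex of $Y\times I\to M\times I$ that depends smoothly on the parameter. Everything else—the choice of $\Theta_t$, the coherence of $\Psi_t$ in the master diagram—is then a routine consequence of Proposition \ref{prop:general action on curvings} and the definitions.
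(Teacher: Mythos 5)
Your skeleton (a fixed trivial $\cQ$ over $Y\times_{\pi}Y$, the presentation $\xi_t=\Id\times\bar f_t$ dictated by the lift, and smoothness handled by running the construction parametrically) is consistent with how the paper arrives at this lemma, but the central step of your plan is wrong: you cannot first fix $\Theta_t=0$ and then look for $\Psi_t$ separately. The morphism $\Psi_t$ in \eqref{eq:familyPsi} is required to be an isomorphism of $\U(1)$-bundles \emph{with connection}, so its mere existence forces equality of curvatures over $Z^{[2]}$, namely $\varrho_1^{\ast}F_A+\mathfrak{z}_{(2)}^{\ast}F_{\Theta_t}=\mathfrak{z}_{(1)}^{\ast}F_{\Theta_t}+\varrho_2^{\ast}F_{A}$, where in the chosen presentation the right-hand pullback involves $\bar f_t$; equivalently, applying the \v{C}ech differential $\delta$ to the relation $\bar f_t^{\ast}b_2=b_1+\Delta_{\pi}^{\ast}F_{\Theta_t}$ of Equation \eqref{eq:relationbY} (valid for any pair of curvings related by the automorphism, and curvings exist) gives $\delta(\Delta_{\pi}^{\ast}F_{\Theta_t})=(\bar f_t\times\bar f_t)^{\ast}F_A-F_A$ on $Y\times_{\pi}Y$. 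With $\Theta_t=0$ this would force the lifted isotopy to preserve the curvature of the fixed connective structure $A$, which is not assumed and fails for a generic pair $(A,\bar f_t)$. Hence $\Theta_t$ and $\Psi_t$ must be constructed jointly, with $F_{\Theta_t}$ absorbing the failure of $\bar f_t$ to preserve $A$ --- this is exactly why $F_{\Theta_t}$ enters the action on curvings as $\bar f_t^{\ast}b-\Delta_{\pi}^{\ast}F_{\Theta_t}$. Note that your own first paragraph writes down the $\delta$-constraint that $F_{\Theta_t}$ must solve and then contradicts it by setting $\Theta_t=0$, so the proposal is internally inconsistent at its key point.

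The remaining steps are in the right spirit but not yet a proof. Equality of Dixmier--Douady classes does give \emph{some} stable isomorphism covering $f_t$, and transporting it by a $2$-isomorphism to the presentation with $Z=Y\times_{\pi}Y$ and trivial $\cQ$ is legitimate (the paper states every automorphism has this form up to $2$-isomorphism); but after the transport you still owe the verification that the resulting $\Theta_t$ and $\Psi_t$ can be chosen smoothly in $t$, compatibly with $\mu$ over $Z^{[3]}$, and equal to the identity at $t=0$. You correctly identify this parametrized statement over $M\times I$ as the crux, but it is only described, not carried out: to close the argument you would have to actually solve the $t$-dependent descent and coherence equations (for instance via a chain homotopy in the \v{C}ech complex of $\pi\times\Id_I$ depending smoothly on $t$, or by an explicit choice of $\Theta_t$ obtained by integrating suitable data along the isotopy, in the spirit of the choice of $\Theta_t$ made later in the paper when realizing solitons as self-similar flows), rather than assert their solvability.
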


\noindent
We characterize now the self-similar solutions of the generalized Ricci flow on an abelian bundle gerbe $(\cP,A,\pi,\mu)$, similar to the characterization obtained in \cite{GFStreetsBook} within the framework of generalized metrics on exact Courant algebroids. 

\begin{prop}
Let $(g,b)\in \cI(\cP,A,\pi,\mu)$. There exists a family $(\cQ,\Theta_t, \Psi_t,f_t) \in \Aut(\cP,A,\pi,\mu)$ such that the pair:
\begin{equation}
\label{eq:generatedflow}
(g_t, b_t) = (\cQ,\Theta_t, \Psi_t,f_t)\cdot (g,b) = (f_t^{\ast} g , \bar{f}^{\ast}_t b  - \Delta_{\pi}^{\ast} F_{\Theta_t})    
\end{equation}

\noindent
is a generalized Ricci flow only if:
\begin{equation}
\label{eq:pregeneralizedsolitons}
\Ric^{g} + \frac{1}{2} \cL_{v} g - \frac{1}{2} H_{b} \circ_{g} H_{b} = 0\, , \qquad \nabla^{g\ast}H_{b} +   H_b (v) + \alpha  = 0  
\end{equation} 

\noindent
for a vector field $v \in \mathfrak{X}(M)$ and a closed two-form $\alpha \in \Omega^2(M)$ on $M$. Conversely, if $(g,b)\in \cI(\cP,A,\pi,\mu)$ satisfies \eqref{eq:pregeneralizedsolitons} for a vector field $v\in \mathfrak{X}(M)$ and a closed two-form $\alpha\in \Omega^2(M)$ such that $\pi^{\ast}\alpha \in \Omega^2(Y)$ is exact, then there exists a family of automorphisms $(\cQ,\Theta_t, \Psi_t,f_t) \in \Aut(\cP,A,\pi,\mu)$ for which \eqref{eq:generatedflow} is a generalized Ricci flow on $(\cP,A,\pi,\mu)$.
\end{prop}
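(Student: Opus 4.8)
The plan is to differentiate the one-parameter family \eqref{eq:generatedflow} at arbitrary time $t$ and match the result against the generalized Ricci flow equations \eqref{eq:GeneralizedRicciflow}. Write $g_t = f_t^\ast g$ and $b_t = \bar f_t^\ast b - \Delta_\pi^\ast F_{\Theta_t}$. For the metric part, differentiating $g_t = f_t^\ast g$ gives $\partial_t g_t = f_t^\ast \cL_{v_t} g$ where $v_t$ is the time-dependent vector field generating the isotopy $f_t$; pulling everything back by $f_t^{-1}$ and using diffeomorphism-equivariance of $\Ric$ and of the operation $\circ_g$, the metric evolution equation in \eqref{eq:GeneralizedRicciflow} becomes, at $t=0$, the condition $\cL_{v_0} g = -2\Ric^g + H_b\circ_g H_b$, i.e.\ the first equation in \eqref{eq:pregeneralizedsolitons} with $v = -\tfrac{1}{2}v_0$ (up to the sign/normalization bookkeeping one has to do carefully). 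For the curving part, differentiating $b_t$ at $t=0$ gives $\partial_t b_t|_{t=0} = \cL_{\bar v_0} b - \Delta_\pi^\ast \partial_t F_{\Theta_t}|_{t=0}$ on $Y$, where $\bar v_0$ is the generator of $\bar f_t$; since $\partial_t b_t$ descends to a $2$-form on $M$ (by the Remark after Definition~\ref{def:NSNSsystem}) and $\Delta_\pi^\ast \partial_t F_{\Theta_t}$ is the pullback of a closed $2$-form $\alpha$ on $M$ (the curvature term is closed), matching with $-\nabla^{g\ast}H_b$ produces the second equation in \eqref{eq:pregeneralizedsolitons}; the $H_b(v)$ term arises because $\cL_{\bar v_0} b$ projects to a term involving the contraction $\iota_{v_0}H_b$ modulo exact forms. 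One must check that the "only if" direction really only needs the $t=0$ derivative, which is legitimate since by the same equivariance the flow equation at time $t$ is the $f_t$-pushforward of the equation at time $0$.

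For the converse, suppose $(g,b)$ satisfies \eqref{eq:pregeneralizedsolitons} for some $v\in\mathfrak X(M)$ and closed $\alpha\in\Omega^2(M)$ with $\pi^\ast\alpha = \dd\eta$ exact on $Y$. First I would define $f_t$ to be the flow of the vector field $-\tfrac12 v$ (again up to normalization), which is an isotopy to the identity at $t=0$; this handles the metric equation automatically by the computation above, since $g_t = f_t^\ast g$ and the self-similar metric ansatz is exactly Hamilton's classical argument adapted to this setting. Then I would use the lift hypothesis: by Lemma~\ref{lemma:existencePsiTheta}, once $f_t$ is fixed and lifts to $\bar f_t$, there exist families $\Theta_t$ and $\Psi_t$ making $(\cQ,\Theta_t,\Psi_t,f_t)$ a genuine family of automorphisms. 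The remaining freedom is precisely in the choice of $\Theta_t$, i.e.\ in the closed $2$-form $\Delta_\pi^\ast F_{\Theta_t}$, and the point of the hypothesis "$\pi^\ast\alpha$ exact" is that it guarantees one can realize the required correction term $-\alpha$ (times $t$, to first order) as $\Delta_\pi^\ast F_{\Theta_t}$ for an admissible family $\Theta_t$ — a naive choice on $M$ would not lift to a connection on $\cQ$ over $Y\times_\pi Y$, whereas exactness of $\pi^\ast\alpha$ removes exactly that obstruction. With $\Theta_t$ so chosen, one checks directly that $b_t = \bar f_t^\ast b - \Delta_\pi^\ast F_{\Theta_t}$ satisfies $\partial_t b_t = -\nabla^{g_t\ast}H_{b_t}$, again reducing by equivariance to the $t=0$ identity which is the second equation of \eqref{eq:pregeneralizedsolitons}.

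The main obstacle I anticipate is the curving equation, specifically the correct identification of all the terms appearing when one differentiates $\bar f_t^\ast b$ and descends to $M$: one must track how $\cL_{\bar v_0} b$ on $Y$ relates to $\cL_{v} g$-type objects and to $\iota_v H_b$ on the base, using that $b$ is a curving (so $\dd b = \pi^\ast H_b$) and that only $\Delta_\pi^\ast F_\Theta$ and not $\Theta$ itself enters (as noted after Proposition~\ref{prop:general action on curvings}). A secondary technical point is verifying, in the converse, that the constructed $\Theta_t$ together with the forced $\xi_t = \Id\times\bar f_t$ genuinely fit into the master commutative diagram for all $t$ and not merely infinitesimally — this is where one leans on Lemma~\ref{lemma:existencePsiTheta} to supply $\Psi_t$, and on the fact that $\cQ$ can be taken trivial so that connections on it are just $1$-forms on $Y\times_\pi Y$, among which one can integrate the infinitesimal prescription. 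I expect the diffeomorphism (metric) half to be essentially Hamilton's classical self-similarity argument and therefore routine; the genuinely new content, and the delicate part, is the gerbe-theoretic bookkeeping in the curving equation and the role of the exactness hypothesis on $\pi^\ast\alpha$.
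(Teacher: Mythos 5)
Your proposal follows essentially the same route as the paper's proof: differentiate the ansatz, apply Cartan's formula to $\bar f_t^{\ast} b$, observe that the leftover piece is $\delta$-closed and hence descends to a closed two-form $\alpha$ on $M$, and, for the converse, generate $f_t$ by the soliton vector field, invoke Lemma~\ref{lemma:existencePsiTheta} for $\Psi_t$, and integrate the one-form $b(\bar v)+\theta$ (where $\pi^{\ast}\alpha=\dd\theta$) along the lifted flow to produce $\Theta_t$. The only corrections needed are the normalizations you flagged — matching $\cL_{v_0}g=-2\Ric^{g}+H_b\circ_g H_b$ gives $v=v_0$ exactly (no factor $-\tfrac{1}{2}$), so in the converse $f_t$ should be the flow of $v$ itself — and that it is the combination $\dd_Y(b(\bar v_0))-\Delta_{\pi}^{\ast}F_{\partial_t\Theta_t}$, not the curvature term alone, that descends to the closed form $\alpha$.
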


\begin{proof}
We proceed by plugging $(g_t, b_t)   = (f_t^{\ast} g , \bar{f}^{\ast}_t b  - \Delta_{\pi}^{\ast} F_{\Theta_t})$ into the equations of the generalized Ricci flow, given in \eqref{eq:GeneralizedRicciflow}. We first compute: 
\begin{eqnarray*}
& \partial_t g_t = \partial_t f^{\ast}_t g = f^{\ast}_t \cL_{v_t} g\\
& \partial_t b_t = \partial_t (\bar{f}^{\ast}_t b  - \Delta_{\pi}^{\ast} F_{\Theta_t}) = \bar{f}^{\ast}_t \cL_{\bar{v}_t}b - \Delta_{\pi}^{\ast} F_{\partial_t\Theta_t} = \bar{f}^{\ast}_t ((\pi^{\ast}H_b)(\bar{v}_t) +  \dd_Y(b(\bar{v}_t))) - \Delta_{\pi}^{\ast} F_{\partial_t\Theta_t}
\end{eqnarray*}

\noindent
where $v_t$ is the family of vector fields on $M$  generated by $f_t$ via the relation:
\begin{equation*}
v_t \circ f_t = \partial_t f_t
\end{equation*}

\noindent
and the symbol $\cL$ denotes the Lie derivative. Similarly, $\bar{v}_t$ is the family of vector fields on $Y$ generated by $\bar{f}_t$:
\begin{equation*}
\bar{v}_t \circ f_t = \partial_t \bar{f}_t
\end{equation*}

\noindent
Note that $\dd \pi (\bar{v}_t) = v_t$. This immediately implies that $(g_t, b_t)$ satisfies the Einstein evolution equation in \eqref{eq:GeneralizedRicciflow} if and only if:
\begin{equation*}
f^{\ast}_t \cL_{v_t} g = - 2 f^{\ast}_t\Ric^{g} +    f^{\ast}_t(H_{b} \circ_{g} H_{b}) \,\, \Leftrightarrow \,\,     \cL_{v_t} g = - 2  \Ric^{g} + H_{b} \circ_{g} H_{b}
\end{equation*} 

\noindent
Hence, evaluating this equation at $t=0$, we obtain the first equation in the statement. Regarding the Maxwell evolution equation in \eqref{eq:GeneralizedRicciflow}, that is, the evolution equation  for $b_t$, we observe that:
\begin{equation*}
\delta(\dd_Y(\bar{f}^{\ast}_t  b(\bar{v}_t)) - \Delta_{\pi}^{\ast} F_{\partial_t\Theta_t}) = 0
\end{equation*}

\noindent
where $\delta$ is the simplicial differential of the \v{C}ech nerve of $\pi\colon Y\to M$. Therefore, there exists a smooth family $\alpha_t$ of closed one-forms such that: 
\begin{equation*}
\pi^{\ast}\alpha_t = \dd_Y(\bar{f}^{\ast}_t  b(\bar{v}_t)) - \Delta_{\pi}^{\ast} F_{\partial_t\Theta_t} 
\end{equation*} 

\noindent
and consequently:
\begin{equation*}
\nabla^{g\ast}H_{b} +   H_b (v_t) + \alpha_t  = 0  
\end{equation*} 

\noindent
Evaluating this equation at $t=0$ we obtain the second equation in the statement. For the converse, assume $(g,b) \in \cI(\cP,A,\pi,\mu)$ satisfies equations \eqref{eq:pregeneralizedsolitons} for a certain $v\in \mathfrak{X}(M)$ and a closed two-form $\alpha  \in \Omega^2(M)$ such that $\pi^{\ast}\alpha = \dd \theta \in \Omega^2(Y)$ is exact. Consider the family $f_t$ of diffeomorphisms connected to the identity generated by $v$ and its lift $\bar{f}_t\colon Y\to Y$, with corresponding vector field $\bar{v} \in \mathfrak{X}(Y)$. By Lemma \ref{lemma:existencePsiTheta}, there exists a family of connections $\Theta_t$ and a family of isomorphisms $\Psi_t$ as in \eqref{eq:familyPsi} such that $(\cQ,\Theta_t, \xi_t, \Psi_t,f_t)$ is a family of automorphisms of $(\cP,A,\pi,\mu)$. Then, we define:
\begin{equation*}
g_t = f^{\ast}_t g\, , \qquad b_t = b\cdot (\cQ,\Theta_t, \xi_t, \Psi_t,f_t)  = \bar{f}^{\ast}_t b  - \Delta_{\pi}^{\ast} F_{\Theta_t}
\end{equation*}

\noindent
where $f_t$ is the family of automorphisms generated by $v$ and $\bar{f}_t \colon Y \to Y$ is a lift of $f_t$, with corresponding vector field $\bar{v} \in \mathfrak{X}(Y)$. We compute:
\begin{equation*}
\partial_t g_t = \partial_t f^{\ast}_t g = f^{\ast}_t \cL_v g = f^{\ast}_t (-  2 \Ric^{g} + H_b\circ_g H_b) = -  2 \Ric^{g_t} + (f^{\ast}_tH_b)\circ_{g_t} (f^{\ast}_t H_b) = -  2 \Ric^{g_t} + H_{b_t}\circ_{g_t} H_{b_t} 
\end{equation*}

\noindent
and hence the evolution equation for $g_t$ in Equation \eqref{eq:GeneralizedRicciflow} is satisfied. Regarding $b_t$, we have:
\begin{eqnarray*}
& \partial_t b_t  = \partial_t (\bar{f}^{\ast}_t b  - \Delta_{\pi}^{\ast} F_{\Theta_t}) = \bar{f}^{\ast}_t \cL_{\bar{v}} b - \Delta_{\pi}^{\ast} F_{\partial_t\Theta_t} = \bar{f}^{\ast}_t ( \pi^{\ast}H_b(v) + \dd_Y (b (\bar{v}))) - \Delta_{\pi}^{\ast} F_{\partial_t\Theta_t}  \\
& = \bar{f}^{\ast}_t ( - \pi^{\ast}( \alpha + \nabla^{g\ast}H_b) + \dd_Y (b (\bar{v}))) - \Delta_{\pi}^{\ast} F_{\partial_t\Theta_t} 
=   -   \nabla^{g_t\ast}H_{b_t} - \dd_Y (\bar{f}^{\ast}_t \theta) + \dd_Y (\bar{f}^{\ast}_t b (\bar{v})) - \Delta_{\pi}^{\ast} F_{\partial_t\Theta_t} 
\end{eqnarray*}

\noindent
Therefore, choosing $\Theta_t$ so that:
\begin{equation*}
 \Theta_t =  \int_0^t \bar{f}^{\ast}_s (b (\bar{v}) + \theta)\, \dd s \in \Omega^1(Y)
\end{equation*}

\noindent
modulo a closed one-form on $Y$, we conclude that $(g_t,b_t)$ is a generalized Ricci flow.
\end{proof}

\noindent
This result motivates the following definition.
\begin{definition}
A \emph{steady generalized Ricci soliton} on $(\cP,A,\pi,\mu)$ is a triple $(g,b,v)$ consisting of a Riemannian metric $g$ on $M$, a curving $b$ on $(\cP,A,\pi,\mu)$ and a vector field $v\in \mathfrak{X}(M)$ satisfying the following differential system:
\begin{equation}
\label{eq:generalizedriccisystem}
\Ric^{g} + \frac{1}{2} \cL_{v} g - \frac{1}{2} H_{b} \circ_{g} H_{b} = 0\, , \qquad \nabla^{g\ast}H_{b} +   H_b (v)   = 0  
\end{equation} 

\noindent
A steady generalized Ricci soliton $(g,b,v)$ is \emph{gradient} if $v^{\flat_g} = \nabla^g\phi$ for a smooth function $C^{\infty}(M)$. 
\end{definition}

\begin{remark}
It is also possible to define \emph{shrinking} and \emph{expanding} generalized Ricci solitons, see for example \cite{GFStreetsBook,PodestaRaffero}. Since these do not seem to have a direct supergravity interpretation, we restrict in this note to \emph{steady} generalized Ricci solitons.
\end{remark}

\noindent
We will omit the term \emph{steady} in the following for simplicity in the exposition. Equations \eqref{eq:generalizedriccisystem} define the \emph{generalized Ricci soliton system}. Since we will exclusively consider steady generalized Ricci solitons, in the following we will omit the term \emph{steady} for simplicity in the exposition. We will denote gradient Ricci solitons as triples of the form $(g,b,\phi)$, and we will refer to the function $\phi$ as the \emph{dilaton} of the generalized Ricci soliton. We will refer to the first equation in \eqref{eq:generalizedriccisystem} as the Einstein equation, whereas we will refer to the second equation in \eqref{eq:generalizedriccisystem} as the \emph{Maxwell equation}. This terminology is rooted in the theoretical physics interpretation of gradient generalized Ricci solitons as solutions to a certain supergravity theory. For gradient generalized Ricci solitons, the Einstein equation simplifies to:
\begin{equation*}
 \Ric^{g} + \nabla^g \dd \phi - \frac{1}{2} H_{b} \circ_{g} H_{b} = 0   
\end{equation*}

\noindent
The following result is well-known; an explicit proof can be found, for instance, in \cite[Proposition 4.33]{GFStreetsBook} or \cite[Proposition 3.7]{Moroianu:2023jof}.
\begin{lemma}
\label{lemma:gradientcondition}
Let $(g,b,\phi)$ be a gradient Ricci soliton. The following formula holds:
\begin{equation}
\label{eq:strongcondition}
\nabla^{g\ast}\dd\phi + \vert\dd \phi\vert_g^2 - \vert H_b \vert^2_g = \lambda(g,b,\phi)
\end{equation}

\noindent
for a real constant $\lambda(g,b,\phi) \in \mathbb{R}$.
\end{lemma}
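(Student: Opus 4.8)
The plan is to differentiate the Einstein equation, use the second Bianchi identity, and combine with the Maxwell equation to show that the gradient of the proposed function $\lambda$ vanishes. First I would take the trace of the gradient Einstein equation $\Ric^g + \nabla^g\dd\phi - \tfrac12 H_b\circ_g H_b = 0$, which yields a scalar identity relating the scalar curvature $\mathrm{R}^g$, the Laplacian $\Delta_g\phi = \tr_g\nabla^g\dd\phi$, and $|H_b|_g^2$; this is the natural candidate for reconstructing the left-hand side of \eqref{eq:strongcondition}. I would also apply the divergence operator $\nabla^{g\ast} = -\mathrm{div}_g$ to the Einstein equation. On the $\Ric^g$ term, the contracted second Bianchi identity gives $\mathrm{div}_g\Ric^g = \tfrac12\dd\mathrm{R}^g$; on the Hessian term, commuting derivatives produces a Ricci-curvature contraction $\Ric^g(\nabla\phi,\cdot) + \dd(\Delta_g\phi)$ (Bochner-type); and on the $H_b\circ_g H_b$ term, one gets a contraction of $H_b$ with $\nabla^{g\ast}H_b$ plus a term involving $\dd\phi$ contracted with $|H_b|^2$, after using $\dd H_b = 0$ and $v^{\flat_g} = \dd\phi$.

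Next I would feed in the Maxwell equation in the gradient form $\nabla^{g\ast}H_b + H_b(\nabla^g\phi) = 0$ (using $v^{\flat_g} = \nabla^g\phi$, i.e. $v = \grad_g\phi$) to eliminate $\nabla^{g\ast}H_b$ from the divergence computation. The key algebraic point is that the curvature terms $\Ric^g(\nabla\phi,\cdot)$ arising from the Bochner commutator can be re-expressed, via the Einstein equation itself, in terms of $-\nabla^g\dd\phi(\nabla\phi,\cdot) + \tfrac12(H_b\circ_g H_b)(\nabla\phi,\cdot)$; and $\nabla^g\dd\phi(\nabla\phi,\cdot) = \tfrac12\dd(|\dd\phi|_g^2)$ while $(H_b\circ_g H_b)(\nabla\phi,\cdot)$ combines with the $H_b(\nabla\phi)$-contraction from the Maxwell substitution. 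Carefully bookkeeping signs, all terms should organize into $\dd$ of the scalar $\nabla^{g\ast}\dd\phi + |\dd\phi|_g^2 - |H_b|_g^2$ (up to an overall nonzero constant), so that this scalar has vanishing differential and is therefore constant on the connected manifold $M$. The constant is then named $\lambda(g,b,\phi)$.

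The main obstacle is the second step: correctly tracking the Ricci-curvature commutator terms and the contractions of the three-form $H_b$ with its codifferential, keeping every index contraction and sign consistent, so that the apparently disparate contributions genuinely assemble into an exact one-form $\dd(\text{scalar})$. This is precisely the computation carried out in \cite[Proposition 4.33]{GFStreetsBook}; since the statement explicitly cites that reference (and \cite[Proposition 3.7]{Moroianu:2023jof}), I would present the argument at the level of these structural steps and refer the reader there for the full index computation, rather than reproducing it in detail here.
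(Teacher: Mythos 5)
Your outline is correct and is essentially the same route the paper takes: the paper gives no independent proof of Lemma \ref{lemma:gradientcondition}, deferring precisely to \cite[Proposition 4.33]{GFStreetsBook} and \cite[Proposition 3.7]{Moroianu:2023jof}, whose argument is exactly the divergence-of-the-Einstein-equation computation you describe (contracted second Bianchi identity, Bochner commutation for $\nabla^{g}\dd\phi$, substitution of the Maxwell equation, and the traced Einstein equation to eliminate $\dd s^g$, yielding $\dd$ of the scalar in \eqref{eq:strongcondition}). The only point to watch when importing the cited computation is the normalization of $\vert H_b\vert_g^2$, since the paper uses the determinant norm rather than the tensor norm, which is the source of the factor-$6$ discrepancy noted in the remark following the lemma.
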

 
\begin{remark}
The factor 6 discrepancy with respect to \cite[Proposition 4.33]{GFStreetsBook} is due to the fact that we are using the determinant norm rather than the tensor norm on forms. 
\end{remark}

\noindent
A gradient generalized Ricci soliton $(g,b,\phi)$ satisfies the equations of motion of bosonic NS-NS supergravity if and only if $\lambda(g,b,\phi) = 0$. When this is the case, Equation \eqref{eq:strongcondition} corresponds to the \emph{dilaton equation} of bosonic NS-NS supergravity. This motivates the following definition. 

\begin{definition}
A gradient generalized Ricci soliton $(g,b,\phi)$ is \emph{NS} if $\lambda(g,b,\phi) = 0$.
\end{definition}

\noindent
We will denote by $\Conf(\cP,A,\pi,\mu)$ the configuration space of triples of the form $(g,b,\phi)$, and by $\Sol(\cP,A,\pi,\mu)\subset \Conf(\cP,A,\pi,\mu)$, the set of gradient Ricci solitons on $(\cP,A,\pi,\mu)$.  

\begin{remark}
The \emph{infinitesimal} results we have obtained above by differentiating families of automorphisms can be understood within the general and comprehensive framework for infinitesimal symmetries of bundle gerbes developed in \cite{DjounvounaVaughan,KrepskiVaughan} in terms of multiplicative vector fields on the Lie groupoid underlying a bundle gerbe. This is however beyond the scope of this note.
\end{remark}

%%%%%%%%%%%%%%%%%%%%%%%%%%%%%%%%%%%%%%%%%%%%%%%%%%%%%%%%%%%%%%%%%%%%%%%%%%%%%%

\subsection{The generalized Ricci soliton system in the Einstein Frame}
\label{sec:GenRicciSolEinstein}

%%%%%%%%%%%%%%%%%%%%%%%%%%%%%%%%%%%%%%%%%%%%%%%%%%%%%%%%%%%%%%%%%%%%%%%%%%%%%%
 
The gradient generalized Ricci soliton system admits a variational interpretation through the following action functional, which is inherited from the NS-NS supergravity bosonic action:
\begin{eqnarray}
\label{eq:stringactionfunctional}
S[g,b] = \int_M \nu_g e^{-\phi} (s^g + \vert \dd\phi \vert^2_g -\frac{1}{2}\vert H_b \vert_g^2) 
\end{eqnarray}

\noindent
where $s^g \in C^{\infty}(M)$ denotes the scalar curvature of $g$ and $\phi \in C^{\infty}(M)$ is considered as given. The fact that in this functional the scalar curvature occurs with a multiplicative factor of the dilaton is related to the fact that the Einstein equation of the generalized Ricci soliton system includes a term of the form $\nabla^g\dd \phi \in \Gamma(T^{\ast}M\odot T^{\ast}M)$, which turns out to be problematic regarding the analytic properties of the system. This can be remedied by means of a conformal transformation of the metric that eliminates such a factor and brings the Einstein equation to its \emph{standard form}, usually referred to as the \emph{Einstein frame} in the theoretical physics literature. To do this, we consider the bijection:
\begin{eqnarray*}
( - )_E \colon \Conf(\cP,A,\pi,\mu) \to \Conf(\cP,A,\pi,\mu)\, , \qquad (g,b,\phi) \mapsto (g_E = e^{-\frac{2\phi}{n-1}}\,g , b , \phi)
\end{eqnarray*}

\noindent
where $\dim(M) = n + 1$. 
\begin{lemma}
A triple $(g,b,\phi) \in \Conf(\cP,A,\pi,\mu)$ is a gradient generalized Ricci soliton if and only if $(g_E,b,\phi)$ satisfies the 
following differential system:
\begin{eqnarray}
& \Ric^{g_E}   - \frac{1}{n-1}\,\dd\phi\otimes \dd\phi + \frac{1}{\,n-1\,} (e^{-\tfrac{4\phi}{n-1}} \vert H_b\vert_{g_E}^2 +   \lambda \, e^{ \frac{2\phi}{n-1}})\, g_E - \frac{1}{2} e^{-\tfrac{4\phi}{n-1}}\,H_b\circ_{g_E} H_b  = 0 \label{eq:NSNSEinsteinframeEinstein}\\
& \delta^{g_E} H_b + \frac{4}{n-1}  H_b(\dd\phi^{\sharp_{g_E}}) = 0\, , \qquad \delta^{g_E} \dd\phi = e^{-\tfrac{4\phi}{n-1}} \vert H_b\vert_{g_E}^2 +  \lambda\, e^{ \frac{2\phi}{n-1}} \label{eq:NSNSEinsteinframeII}
\end{eqnarray}
 
\noindent
for a real constant $\lambda \in \mathbb{R}$.
\end{lemma}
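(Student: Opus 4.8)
The plan is to carry out a direct conformal change of the metric and translate each equation of the generalized Ricci soliton system (in its gradient form) into the corresponding equation for $g_E = e^{-\frac{2\phi}{n-1}}g$. The computation splits naturally into three parts: the transformation of the Ricci tensor (and scalar curvature), the transformation of the codifferential acting on $H_b$, and the transformation of the dilaton equation \eqref{eq:strongcondition} from Lemma~\ref{lemma:gradientcondition}.

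First I would recall the standard conformal transformation law for the Ricci tensor in dimension $n+1$: if $\tilde g = e^{2\omega} g$ then
\begin{equation*}
\Ric^{\tilde g} = \Ric^g - (n-1)\big(\nabla^g \dd\omega - \dd\omega\otimes\dd\omega\big) - \big(\nabla^{g\ast}\dd\omega + (n-1)|\dd\omega|_g^2\big)\, g\, ,
\end{equation*}
(with my sign convention for $\nabla^{g\ast} = \delta^g\dd$), and specialize to $\omega = -\frac{\phi}{n-1}$, so that $\nabla^g\dd\omega = -\frac{1}{n-1}\nabla^g\dd\phi$ and $\dd\omega\otimes\dd\omega = \frac{1}{(n-1)^2}\dd\phi\otimes\dd\phi$. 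Substituting the simplified gradient Einstein equation $\Ric^g + \nabla^g\dd\phi - \tfrac12 H_b\circ_g H_b = 0$ and using Lemma~\ref{lemma:gradientcondition} to replace the trace term $\nabla^{g\ast}\dd\phi + |\dd\phi|_g^2$ by $|H_b|_g^2 + \lambda$ should collapse the $\nabla^g\dd\phi$ terms and produce, after converting the norm $|H_b|_g^2$ and the tensor $H_b\circ_g H_b$ to the $g_E$-metric via the conformal weights of a $3$-form (each lower index contributing a factor $e^{\frac{2\phi}{n-1}}$), exactly \eqref{eq:NSNSEinsteinframeEinstein}. The reverse implication follows by running the same algebra backwards, since $(-)_E$ is a bijection.

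Next, for the Maxwell equation I would use that under $g_E = e^{2\omega}g$ the codifferential on a $k$-form $\eta$ on an $(n+1)$-manifold satisfies $\delta^{g_E}\eta = e^{-2\omega}\big(\delta^g\eta - (n+1-2k)\, \iota_{\nabla^g\omega}\eta\big)$; for $k=3$ this gives $\delta^{g_E}H_b = e^{-2\omega}\big(\delta^g H_b - (n-5)\,\iota_{\nabla^g\omega}H_b\big)$. Hmm — I should be careful with this constant; let me instead write it as $\delta^{g_E}H_b = e^{-2\omega}\big(\nabla^{g\ast}H_b + c\,H_b(\nabla^g\omega)\big)$ for the appropriate dimension-dependent $c$, plug in $\omega = -\frac{\phi}{n-1}$ and the original Maxwell equation $\nabla^{g\ast}H_b + H_b(v) = \nabla^{g\ast}H_b + H_b(\nabla^g\phi) = 0$, and check that the surviving $H_b(\dd\phi^{\sharp})$ coefficient is exactly $\frac{4}{n-1}$ as in \eqref{eq:NSNSEinsteinframeII}; this is where a sign or factor-of-two slip is most likely, so the coefficient bookkeeping there is the main routine obstacle. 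Finally, the dilaton equation: starting from $\nabla^{g\ast}\dd\phi + |\dd\phi|_g^2 - |H_b|_g^2 = \lambda$ of Lemma~\ref{lemma:gradientcondition}, I would use the conformal transformation of the Laplacian on functions, $\delta^{g_E}\dd\phi = e^{-2\omega}\big(\nabla^{g\ast}\dd\phi + (n-1)\,\dd\phi(\nabla^g\omega)\big) = e^{-2\omega}\big(\nabla^{g\ast}\dd\phi - \dd\phi(\nabla^g\phi)\big) = e^{-2\omega}\big(\nabla^{g\ast}\dd\phi - |\dd\phi|_g^2\big)$, and then substitute \eqref{eq:strongcondition} to get $\delta^{g_E}\dd\phi = e^{-2\omega}\big(|H_b|_g^2 + \lambda - 2|\dd\phi|_g^2\big)$. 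Wait — the target has no $|\dd\phi|^2$ term, so I must combine more carefully: I expect that after also feeding in the Einstein-frame Maxwell relation (or re-deriving the dilaton identity directly in the Einstein frame, as in \cite[Prop.~4.33]{GFStreetsBook}), the $|\dd\phi|^2$ contributions cancel against the trace of \eqref{eq:NSNSEinsteinframeEinstein}, leaving $\delta^{g_E}\dd\phi = e^{-\frac{4\phi}{n-1}}|H_b|_{g_E}^2 + \lambda\, e^{\frac{2\phi}{n-1}}$.

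I expect the genuine content, rather than bookkeeping, to be minimal: all three equations are conformally covariant up to lower-order dilaton terms, and the gradient identity of Lemma~\ref{lemma:gradientcondition} is precisely the extra algebraic relation needed to absorb those terms. The main obstacle is therefore purely computational — tracking the conformal weights of $\Ric$, of $\delta^g$ on $2$- and $3$-forms, of the norm $|H_b|^2$ and the tensor $H_b\circ_g H_b$, and the numerous factors of $\frac{1}{n-1}$ and $\frac{2\phi}{n-1}$ — and ensuring the constant $\lambda$ threads through unchanged; I would present this as a sequence of three short lemmas (Ricci, Maxwell, dilaton) and note that the converse direction is automatic from the bijectivity of $(-)_E$.
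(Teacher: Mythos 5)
Your proposal follows essentially the same route as the paper's proof: write down the classical conformal transformation identities for the Ricci tensor, the codifferential on $k$-forms, and the Hessian/Laplacian of $\phi$, then substitute the gradient soliton equations together with Lemma~\ref{lemma:gradientcondition} and track the conformal weights of $\vert H_b\vert^2$ and $H_b\circ H_b$. The only hiccup is the sign in your conformal Laplacian step: with the paper's conventions one has $\delta^{g_E}\dd\phi = e^{\frac{2\phi}{n-1}}\bigl(\nabla^{g\ast}\dd\phi + \vert\dd\phi\vert_g^2\bigr)$, so Lemma~\ref{lemma:gradientcondition} plugs in directly and gives the dilaton equation with no leftover $\vert\dd\phi\vert^2$ term and no need to invoke the trace of the Einstein-frame Einstein equation.
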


\begin{remark}
By Lemma \ref{lemma:gradientcondition}, the second equation in \eqref{eq:NSNSEinsteinframeII}, to which we still refer as the \emph{dilaton equation} of the system, is redundant. However, it is convenient to consider it explicitly as part of the gradient generalized Ricci soliton system to study its Cauchy problem.
\end{remark}

\begin{proof}
Setting $g = e^{\frac{2\phi}{n-1}} g_E$, we begin with the classical identity:
\begin{eqnarray*}
\nabla^{g}_{v_1} v_2 = \nabla^{g_E}_{v_1} v_2  + \frac{1}{n-1} v_1(\phi) v_2 + \frac{1}{n-1} v_2 (\phi) v_1 - \frac{1}{n-1} g_E(v_1 , v_2) (\dd\phi)^{\sharp_{g_E}}
\end{eqnarray*}

\noindent
From this, we obtain the standard identities:
\begin{eqnarray*}
\label{eq:conformalidentitiescodifferentialricci}
& \Ric^g = \Ric^{g_E} - (\nabla^{g_E} \dd\phi - \frac{1}{n-1} \dd\phi \otimes \dd\phi) + \frac{1}{n-1} (\delta^{g_E}\dd\phi - \lvert \dd\phi\rvert^2_{g_E} )\,g_E \nonumber \\
&\delta^g \alpha = e^{-\frac{2\phi}{n-1}} (\delta^{g_E}\alpha - \frac{n + 1 -2k}{n-1} \alpha(\dd\phi)^{\sharp_{g_E}}) \\
& \nabla^g \dd \phi = \nabla^{g_E} \dd \phi - \frac{2}{n-1} \dd \phi \otimes \dd \phi + \frac{1}{n-1} \vert \dd \phi\vert_{g_E}^2 g_E
\end{eqnarray*}

\noindent
Using the previous relations, we compute:
\begin{eqnarray*}
& \Ric^{g} + \nabla^g \dd \phi - \frac{1}{2} H_{b} \circ_{g} H_{b}  = \Ric^{g_E} + \frac{1}{\,n-1\,} \delta^{g_E} \dd\phi\, g_E - \frac{1}{2} e^{-\tfrac{4\phi}{n-1}}\,H_b\circ_{g_E} H_b - \frac{1}{n-1}\,\dd\phi\otimes \dd\phi \\
& \delta^g H_{b} +   H_b (\dd \phi)^{\sharp_g}   = e^{-\frac{2\phi}{n-1}}( \delta^{g_E} H_{b} +  \frac{4}{n-1} H_b (\dd \phi)^{\sharp_{g_E}} ) \\
& \nabla^{g\ast}\dd\phi + \vert\dd \phi\vert_g^2 - \vert H_b \vert^2_g = e^{- \frac{2\phi}{n-1}}\delta^{g_E}\dd\phi - e^{- \frac{6\phi}{n-1}}\ \vert H_b\vert_{g_E}^2 
\end{eqnarray*}

\noindent
Combining these equations, we conclude.
\end{proof}

\begin{remark}
Recall the identity $e^{-\phi} \nu_g s^g = \nu_{g_E} s^{g_E}$, which shows explicitly that the scalar curvature term in \eqref{eq:stringactionfunctional} becomes the Einstein-Hilbert term in the Einstein frame.
\end{remark}

\noindent
In the following, we will exclusively consider the gradient generalized Ricci soliton system in the Einstein frame, that is, we will be working with equations \eqref{eq:NSNSEinsteinframeEinstein} and \eqref{eq:NSNSEinsteinframeII}, omitting the subscript $E$ in $g_E$ for ease of notation. There is no loss of generality in this choice since $( - )_E \colon \Conf(\cP,A,\pi,\mu) \to \Conf(\cP,A,\pi,\mu)$ is, by construction, a bijection both at the level of configuration and solution spaces.

% % % % % % % % % % % % % % % % % % % % % % % % % % % % % % % % % % % % % % 
% % % % % % % % % % % % % % % % % % % % % % % % % % % % % % % % % % % % % % 

\section{The Cauchy problem}

% % % % % % % % % % % % % % % % % % % % % % % % % % % % % % % % % % % % % % 
% % % % % % % % % % % % % % % % % % % % % % % % % % % % % % % % % % % % % %

In this section, we study the Cauchy problem for gradient generalized Ricci solitons on a bundle gerbe. In order to set up the Riemannian Cauchy problem for the gradient generalized Ricci soliton system, we fix a smooth hypersurface $\Sigma\subset M$ and consider the generalized Ricci soliton system on a tubular neighborhood of $\Sigma$ in $M$, which we identify with $\cI\times \Sigma$ for an interval $\cI$ with Cartesian coordinate $\tau$. Hence, we consider the gradient generalized Ricci soliton system on the product manifold $\cI\times \Sigma$ and we proceed by reinterpreting the configuration space of the system in terms of families of objects defined over $\Sigma$ satisfying the appropriate reinterpretation of the gradient generalized Ricci soliton system as a system of evolution equations on $\Sigma$. We begin with the globally hyperbolic reduction of a curving on a bundle gerbe on $M = \cI \times \Sigma$, which is one of the main ingredients of the generalized Ricci soliton system. We proceed along the lines of \cite[Chapter 6]{Shahbazi}.

% % % % % % % % % % % % % % % % % % % % % % % % % % % % % % % % % % % % % % 

\subsection{Time-dependent reduction of bundle gerbes with connection}
\label{subsec:bundlereduction}

% % % % % % % % % % % % % % % % % % % % % % % % % % % % % % % % % % % % % % 

Let $\bar{\cC} := (\bar{\cP} , \bar{A}, \bar{Y})$ be a bundle gerbe on $M = \cI\times \Sigma$. Since the goal is to reduce the generalized Ricci soliton system on $\bar{\cC}$, we will assume that the \emph{topological data} contained in $\cC$, namely $(\bar{\cP},\bar{Y})$, is given by the pull-back of a bundle gerbe $(\cP,Y)$ defined over $\Sigma$, while we will allow the connective structure $\bar{A}$ to be genuinely \emph{time-dependent}, that is, not necessarily given by the pull-back of a fixed connective structure on $(\cP,Y)$. We will refer to such bundle gerbes as \emph{reducible} bundle gerbes on $\cI\times \Sigma$. This setup captures the dynamics of the Cauchy problem for gradient generalized Ricci solitons on a bundle gerbe in its full generality. Since the submersion $\bar{Y} \to M$ of a reducible bundle gerbe on $\cI\times\Sigma$ is, by assumption, a pull-back submersion, it follows that it is isomorphic to:
\begin{equation*}
\bar{Y} = \cI \times Y \to \cI \times \Sigma	
\end{equation*}

\noindent
where the projection on the first factor is the identity map and $Y\to \Sigma$ is the submersion underlying the bundle gerbe $(\cP , Y)$. Similarly:
\begin{equation*}
\bar{Y}\times_M  \bar{Y} = \cI\times Y \times_\Sigma Y \to \cI \times \Sigma
\end{equation*}

\noindent
with arrow restricting to the identity map $\cI\to \cI$ on the first factor. Since $\bar{\cP}$ is the pull-back of $\cP\to Y\times_{\Sigma} Y$ to $\cI \times Y\times_{\Sigma} Y$, we have:
\begin{eqnarray*}
\bar{\cP}= \cI \times \cP \to \cI \times Y\times_{\Sigma} Y
\end{eqnarray*}

\noindent
again with the arrow restricting to the identity map $\cI\to \cI$ on the first factor. The principal action of the group $\U(1)$ on $\bar{\cP} = \cI \times \cP$ is the one induced by the principal $\U(1)$ action carried by $\cP$ and the trivial action on the $\cI$ factor. Let $\bar{A} \in \Omega^1(\cI\times \cP , \mathbb{R})$ be a connective structure on $\bar{\cP}$. Then we can write:
\begin{equation*}
\bar{A} = \Psi_{\tau} \, \dd \tau + A_{\tau}
\end{equation*}

\noindent
where $\Psi_{\tau}$ is a family of functions on $\cP$ and $A_{\tau}$ is a family of connections on $\cP$. Recall that $\tau$ denotes the Cartesian coordinate on $\cI$. Since $\bar{A}$ is a connection on $\bar{\cP}$, it is in particular invariant under the $\U(1)$ action of the principal bundle $\bar{\cP}$, and since this action is ineffective on $\cI$ we conclude that $\Psi_{\tau}$ defines in fact a family of invariant functions on $\cP$ whence it descends to a family of invariant functions on $Y\times_{\Sigma} Y$ that we denote by the same symbol for ease of notation. Since $(\bar{\cP} , \bar{A}, \bar{Y})$ is a bundle gerbe, it comes equipped with an isomorphism:
\begin{equation*}
 (\bar{\pi}^{\ast}_{12}\bar{\cP}\otimes\bar{\pi}^{\ast}_{23}\bar{\cP} , \bar{\pi}^{\ast}_{12}\bar{A}\otimes \bar{\pi}^{\ast}_{23}\bar{A})  \xrightarrow{\simeq} (\bar{\pi}^{\ast}_{13} \bar{\cP} , \bar{\pi}^{\ast}_{13}\bar{A})
\end{equation*}

\noindent
where $\bar{\pi}\colon \bar{Y} \to M$ and the notation $\pi_{ij} \colon \bar{Y}\times_M \bar{Y}\times_M \bar{Y} \to \bar{Y}\times_M \bar{Y}$ forgets the entry not labelled neither by $i$ nor $j$. This implies $\delta \Psi_{\tau} = 0$, where $\delta$ is the simplicial differential of the \v{C}ech nerve of $\pi\colon Y \to M$. Since this differential is exact, it follows that $\Psi_{\tau}$ descends through $\delta$ to a family of functions on $Y$ that we denote again by $\Psi_{\tau}$. The preceding discussion thus leads to the following correspondence. 

\begin{prop}
There is a natural one-to-one correspondence between reducible bundle gerbes $(\bar{\cP},\bar{Y},\bar{A})$ on $M = \cI\times \Sigma$ and tuples $(\cP,Y,A_{\tau} , \Psi_{\tau})$, consisting of a bundle gerbe $(\cP,Y)$ on $\Sigma$, a family of functions $\Psi_{\tau}$  on $Y$, and a family of connective structures $A_{\tau}$ on $(\cP,Y)$.
\end{prop}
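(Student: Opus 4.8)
The plan is to make the correspondence precise in both directions and then check that the two constructions are mutually inverse. Starting from a reducible bundle gerbe $(\bar{\cP},\bar{Y},\bar{A})$ on $\cI\times\Sigma$, the discussion preceding the statement already produces the target data: the assumption that $\bar{Y}\to M$ is a pullback submersion identifies $\bar{Y}$ with $\cI\times Y$ over $\cI\times\Sigma$, and similarly $\bar{Y}^{[2]}\simeq\cI\times Y^{[2]}$; the assumption that $\bar{\cP}$ is a pullback of a $\U(1)$-bundle $\cP\to Y^{[2]}$ gives $\bar{\cP}\simeq\cI\times\cP$ with the induced $\U(1)$-action; and the gerbe multiplication $\bar{\mu}$ restricts, for each fixed $\tau$, to a bundle isomorphism over $Y^{[3]}$ which, by the pullback structure, is independent of $\tau$ and hence recovers the multiplication $\mu$ on $(\cP,Y)$ over $\Sigma$. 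The only genuinely time-dependent datum is $\bar A$, which decomposes as $\bar A=\Psi_\tau\,\dd\tau+A_\tau$; the argument given in the text shows $\Psi_\tau$ descends first to an invariant function on $Y^{[2]}$ (by $\U(1)$-invariance of $\bar A$ together with ineffectivity of the action on the $\cI$-factor) and then, via $\delta\Psi_\tau=0$ together with exactness of the \v{C}ech differential, to a family of functions on $Y$, while the connection-preservation of $\bar\mu$ forces each $A_\tau$ to be a genuine connective structure on $(\cP,Y)$. This defines the map from reducible gerbes to tuples.

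For the reverse direction, given $(\cP,Y,A_\tau,\Psi_\tau)$, I would simply set $\bar Y=\cI\times Y$, $\bar{\cP}=\cI\times\cP$ with the product $\U(1)$-action, let $\bar\mu$ be $\id_{\cI}\times\mu$, and define $\bar A=\Psi_\tau\,\dd\tau+A_\tau\in\Omega^1(\cI\times\cP)$, where $\Psi_\tau$ is pulled back from $Y$ to $Y^{[2]}$ and then to $\cP$. One then checks: (i) $\bar A$ is $\U(1)$-invariant and restricts to the Maurer--Cartan form on fibres, i.e. it is a connection on $\bar{\cP}$ — this is immediate since $A_\tau$ is a connection on $\cP$ for each $\tau$ and $\dd\tau$ is basic and invariant; (ii) $\bar\mu$ is connection-preserving for $\bar A$ — this follows from $\delta\Psi_\tau=0$ (so the $\dd\tau$-components match under $\bar\mu$) together with $\bar\mu$ being connection-preserving for each $A_\tau$, which holds because $A_\tau$ is by hypothesis a connective structure on the \emph{fixed} gerbe $(\cP,Y,\mu)$; (iii) the associativity condition on $Y^{[4]}$ holds since it is inherited from $\mu$ and is unaffected by the $\cI$-factor. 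Thus $(\bar{\cP},\bar Y,\bar A)$ is a reducible bundle gerbe on $\cI\times\Sigma$.

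It remains to verify that the two assignments are inverse to each other, which is essentially a bookkeeping check: starting from a tuple, building the gerbe, and decomposing its connection returns the decomposition $\Psi_\tau\,\dd\tau+A_\tau$ verbatim; conversely, starting from a reducible gerbe, extracting the tuple and rebuilding recovers the gerbe up to the canonical isomorphisms used to write $\bar Y\simeq\cI\times Y$ and $\bar{\cP}\simeq\cI\times\cP$ in the first place. I would phrase the final statement of the correspondence as being "up to canonical isomorphism of bundle gerbes with connection", so that these identifications are harmless.

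**Main obstacle.** The substantive point — rather than a hard one — is the descent of $\Psi_\tau$ through \emph{two} stages of the \v{C}ech nerve: first realizing that $\U(1)$-invariance of $\bar A$ plus ineffectivity of the principal action on $\cI$ forces $\Psi_\tau$ to be pulled back from $Y^{[2]}$, and then that the compatibility of $\bar\mu$ with $\bar A$ reads exactly as the cocycle condition $\delta\Psi_\tau=0$ on $Y^{[3]}$, whence exactness of $\delta$ yields a function on $Y$. Care is needed to keep straight which \v{C}ech differential ($\delta$ for $Y\to M$ versus for $Y\to\Sigma$, which agree as maps of manifolds) is in play, and to confirm that the ``modulo exact/closed'' ambiguities do not leak into the bijection. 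Everything else — invariance of $\bar A$, the product structures, the associativity on $Y^{[4]}$ — is routine once the pullback structure of $\bar Y$ and $\bar{\cP}$ is used.
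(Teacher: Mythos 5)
Your forward direction is essentially the paper's own argument: the proposition is stated there as a direct consequence of the preceding discussion, and your account of the descent of $\Psi_{\tau}$ (first from $\cP$ to $Y\times_{\Sigma}Y$ by $\U(1)$-invariance and ineffectivity on the $\cI$-factor, then to $Y$ by the cocycle condition and exactness of $\delta$) reproduces it faithfully. The gap is in your reverse construction. In the tuple $(\cP,Y,A_{\tau},\Psi_{\tau})$ the functions $\Psi_{\tau}$ live on $Y$, and the relation to $\bar{A}$ extracted in the text is that the $\dd\tau$-component of $\bar{A}$, viewed on $Y\times_{\Sigma}Y$, equals $\delta\Psi_{\tau}$. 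You instead define $\bar{A}=\Psi_{\tau}\,\dd\tau+A_{\tau}$ with ``$\Psi_{\tau}$ pulled back from $Y$ to $Y^{[2]}$'', i.e.\ along a single projection, and then justify connection-preservation of $\bar{\mu}$ by invoking ``$\delta\Psi_{\tau}=0$''. That condition is not part of the data and fails for a general family of functions on $Y$ (the kernel of $\delta$ on $C^{\infty}(Y)$ consists exactly of pullbacks from $\Sigma$); with your definition the $\dd\tau$-components do not satisfy the cocycle identity $\pi_{12}^{\ast}(\cdot)+\pi_{23}^{\ast}(\cdot)=\pi_{13}^{\ast}(\cdot)$ over $Y^{[3]}$, so $\bar{\mu}$ is in general not connection-preserving and your step (ii) breaks. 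The repair is to take the $\dd\tau$-component to be $\delta\Psi_{\tau}=\pi_{2}^{\ast}\Psi_{\tau}-\pi_{1}^{\ast}\Psi_{\tau}\in C^{\infty}(Y\times_{\Sigma}Y)$, pulled back to $\cI\times\cP$; then $\delta(\delta\Psi_{\tau})=0$ holds identically, which is precisely the condition needed for $\bar{\mu}$ to preserve $\bar{A}$ in the $\dd\tau$-direction, and this choice is the one that inverts the forward descent.

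Relatedly, the ambiguity you flag but do not settle is genuine and matters once you promise to check that the two assignments are mutually inverse: the primitive of the $\dd\tau$-component under $\delta$ is unique only up to a family of functions pulled back from $\Sigma$, and correspondingly $\Psi_{\tau}$ and $\Psi_{\tau}+\pi^{\ast}c_{\tau}$ produce the same $\bar{A}$. So the correspondence as you set it up is not injective on the tuple side unless one either fixes this normalisation or regards $\Psi_{\tau}$ as the descended function on $Y^{[2]}$ satisfying $\delta\Psi_{\tau}=0$. The paper is silent on this point as well, but a complete proof along your lines should state explicitly how this kernel is quotiented out (or absorbed into the notion of equivalence of tuples) before claiming a one-to-one correspondence.
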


\noindent
We will refer to $(\cP,Y, A_{\tau} , \Psi_{\tau} )$ as the \emph{reduction} of $(\bar{\cP},\bar{Y},\bar{A})$. A direct computation gives the following result.

\begin{lemma}
\label{lemma:gerbecurvaturedecomposition}
The curvature $\cF_{\bar{A}} \in \Omega^2(\cI \times Y\times_{\Sigma} Y)$ of $\bar{A} = \Psi_{\tau} \dd \tau + A_{\tau}$ satisfies the following equation:
\begin{eqnarray*}
\cF_{\bar{A}} = \dd_{Y^{[2]}} \Psi_{\tau} \wedge \dd \tau + \dd \tau \wedge \partial_{\tau} A_{\tau} + \cF_{A_{\tau}}
\end{eqnarray*}

\noindent
where $\dd_{Y^{[2]}}\colon \Omega^{\bullet}(Y\times_{\Sigma} Y) \to \Omega^{\bullet}(Y\times_{\Sigma} Y) $ is the exterior derivative on $\Omega^{\bullet}(Y\times_{\Sigma} Y)$ and $\cF_{A_{\tau}}$ is the family of curvatures of $A_{\tau}$.
\end{lemma}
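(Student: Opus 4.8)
The plan is to prove this by a direct computation that leverages the product structure recorded in the preceding discussion. Recall that $\bar Y\times_M\bar Y = \cI\times Y\times_\Sigma Y$ and $\bar{\cP} = \cI\times\cP$, with the maps to $\cI$ given by projection onto the first factor. This decomposition induces a canonical splitting of the de Rham differential on $\cI\times(Y\times_\Sigma Y)$, and likewise on $\cI\times\cP$, as $\dd = \dd\tau\wedge\partial_\tau + \dd_{Y^{[2]}}$, where $\dd_{Y^{[2]}}$ differentiates only along the $Y\times_\Sigma Y$-directions at fixed $\tau$. Since the structure group $\U(1)$ is abelian, the curvature of $\bar A$ is simply $\cF_{\bar A} = \dd\bar A$, computed on the total space $\cI\times\cP$, where it is a basic form and therefore descends to $\Omega^2(\cI\times Y\times_\Sigma Y)$; hence it suffices to expand $\dd\bar A$ for $\bar A = \Psi_\tau\,\dd\tau + A_\tau$.

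First I would treat the term $\dd(\Psi_\tau\,\dd\tau)$: by the Leibniz rule and $\dd(\dd\tau) = 0$ it equals $\dd\Psi_\tau\wedge\dd\tau$, and substituting $\dd\Psi_\tau = (\partial_\tau\Psi_\tau)\,\dd\tau + \dd_{Y^{[2]}}\Psi_\tau$ and using $\dd\tau\wedge\dd\tau = 0$ leaves precisely $\dd_{Y^{[2]}}\Psi_\tau\wedge\dd\tau$ (here $\Psi_\tau$, being $\U(1)$-invariant on $\cP$, descends to a family of functions on $Y\times_\Sigma Y$, as in the preceding Proposition). Next I would expand $\dd A_\tau$: by construction $A_\tau$ carries no $\dd\tau$-leg, so the splitting gives $\dd A_\tau = \dd\tau\wedge\partial_\tau A_\tau + \dd_{Y^{[2]}}A_\tau$, and $\dd_{Y^{[2]}}A_\tau$ is, for each fixed $\tau$, the curvature $\cF_{A_\tau}$ of the connection $A_\tau$ on $\cP\to Y\times_\Sigma Y$ (there being no quadratic term, by abelianness). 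Adding the two contributions gives
\[
\cF_{\bar A} = \dd_{Y^{[2]}}\Psi_\tau\wedge\dd\tau + \dd\tau\wedge\partial_\tau A_\tau + \cF_{A_\tau},
\]
which is the claimed identity.

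The computation is entirely routine, so I do not expect a genuine obstacle; the only matters requiring a little care are bookkeeping. First, one should be sure that the decomposition $\bar A = \Psi_\tau\,\dd\tau + A_\tau$ used here is the one induced by the product structure, so that $A_\tau$ has no $\dd\tau$-component and $\Psi_\tau\,\dd\tau$ absorbs the entire $\dd\tau$-leg of $\bar A$---this is precisely how the reduction data $(\cP,Y,A_\tau,\Psi_\tau)$ was extracted. Second, one should check that each of the three resulting terms is a basic form, so that the identity genuinely holds in $\Omega^2(\cI\times Y\times_\Sigma Y)$: this follows because $\cF_{A_\tau}$ is basic for each $\tau$, because $\partial_\tau A_\tau$ (the $\tau$-derivative of a family of connections on $\cP$) is a basic $1$-form and hence so is $\dd\tau\wedge\partial_\tau A_\tau$, and because $\dd_{Y^{[2]}}\Psi_\tau$ already lives on the base.
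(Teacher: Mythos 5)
Your computation is correct and is exactly the "direct computation" the paper invokes without writing out: split $\dd = \dd\tau\wedge\partial_\tau + \dd_{Y^{[2]}}$ on the product $\cI\times Y\times_\Sigma Y$, use $\cF_{\bar A}=\dd\bar A$ (abelian structure group), and identify $\dd_{Y^{[2]}}A_\tau$ with $\cF_{A_\tau}$ for each fixed $\tau$. Your extra remarks on basicness and on the absence of a $\dd\tau$-leg in $A_\tau$ are sound bookkeeping consistent with the reduction $(\cP,Y,A_\tau,\Psi_\tau)$ set up just before the lemma.
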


\noindent
A curving on $(\bar{\cP},\bar{Y},\bar{A})$ is by definition a two-form $\bar{b}\in \Omega^2(\bar{Y}\times_M \bar{Y})$ satisfying:
\begin{equation*}
\bar{\delta} \bar{b} = F_{\bar{A}}
\end{equation*}

\noindent 
Since $\bar{Y}\times_M \bar{Y} = \cI \times Y \times_{\Sigma} Y$, we can canonically write every curving $\bar{b}$ on $(\bar{\cP},\bar{Y},\bar{A})$ as follows:
\begin{equation}
\label{eq:decompositionbarb}
\bar{b} = \dd \tau \wedge a_{\tau} + b_{\tau}
\end{equation}

\noindent
for uniquely determined families of one forms $a_{\tau}$ and two-forms $b_{\tau}$ on $Y$.
\begin{lemma}
\label{lemma:reduciblegerbe}
Let $(\bar{\cP},\bar{Y},\bar{A})$ be a reducible bundle gerbe with connective structure on $M = \cI \times \Sigma$ and let  $(\cP,Y, A_{\tau}, \Psi_{\tau})$ be its reduction on $\Sigma$. A two-form $\bar{b}\in \Omega^2(\bar{Y}\times_M \bar{Y})$ is a curving on $(\bar{\cP},\bar{Y},\bar{A})$ if and only if, in the decomposition given in \eqref{eq:decompositionbarb}, $b_{\tau}$ is a family of curvings on $(\cP,A,Y)$ and $a_{\tau}$ is a family of one-forms on $Y$ satisfying:
\begin{equation}
\label{eq:conditionat}
\delta a_{\tau} = \partial_{\tau} A_{\tau} - \dd_{Y^{[2]}} \Psi_{\tau}
\end{equation}

\noindent
for every $\tau\in \cI$, where $\delta$ is the simplicial differential of the \v{C}ech nerve of $\pi \colon Y \to M$.
\end{lemma}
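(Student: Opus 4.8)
The plan is to substitute the decomposition $\bar{b} = \dd\tau\wedge a_\tau + b_\tau$ together with the curvature formula for $\cF_{\bar A}$ from Lemma~\ref{lemma:gerbecurvaturedecomposition} into the curving condition $\bar\delta\bar b = F_{\bar A}$ and separate the resulting identity of $2$-forms on $\cI\times Y\times_\Sigma Y$ into its $\dd\tau$-component and its $Y$-component, thereby extracting two independent equations. Throughout I will use that $\bar\delta$ commutes with $\dd\tau\wedge(-)$ (since $\dd\tau$ is pulled back from $\cI$ and the \v{C}ech differential is taken fiberwise over $M=\cI\times\Sigma$), so that for each fixed $\tau$ we may identify $\bar\delta$ applied to a $\tau$-family of forms on $Y$ (resp.\ $Y^{[2]}$) with the simplicial differential $\delta$ of the \v{C}ech nerve of $\pi\colon Y\to\Sigma$ applied to that family. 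First I would record that $\bar\delta\bar b = \dd\tau\wedge \delta a_\tau + \delta b_\tau$, where on the left $\bar\delta$ is the \v{C}ech differential for $\bar Y\to M$ and on the right the two occurrences of $\delta$ are the \v{C}ech differentials for $Y\to\Sigma$ acting on $1$- and $2$-forms, respectively.

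Next I would compare this with $F_{\bar A} = \dd_{Y^{[2]}}\Psi_\tau\wedge\dd\tau + \dd\tau\wedge\partial_\tau A_\tau + \cF_{A_\tau}$, rewriting the first term as $-\dd\tau\wedge\dd_{Y^{[2]}}\Psi_\tau$ so that all $\dd\tau$-terms are grouped. Matching the component of $\bar\delta\bar b = F_{\bar A}$ that contains no $\dd\tau$ yields exactly $\delta b_\tau = \cF_{A_\tau}$, which is precisely the statement that $b_\tau$ is a curving on $(\cP,A_\tau,Y)$ for every $\tau$. Matching the component proportional to $\dd\tau$ yields $\delta a_\tau = \partial_\tau A_\tau - \dd_{Y^{[2]}}\Psi_\tau$, which is \eqref{eq:conditionat}. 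Conversely, if $b_\tau$ and $a_\tau$ are given families satisfying these two equations, then reassembling $\bar b := \dd\tau\wedge a_\tau + b_\tau$ and running the same bookkeeping in reverse shows $\bar\delta\bar b = F_{\bar A}$, so $\bar b$ is a curving; and since $a_\tau$ is a family of $1$-forms on $Y$ and $b_\tau$ a family of $2$-forms on $Y$, $\bar b$ is indeed a $2$-form on $\bar Y\times_M\bar Y = \cI\times Y\times_\Sigma Y$ of the required shape, and such a decomposition is unique because projecting onto the $\dd\tau$-component and its complement is a well-defined splitting of $\Omega^2(\cI\times Y^{[2]})$.

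The only genuinely delicate point—and the step I would treat most carefully—is the identification of $\bar\delta$ with $\delta$ after the $\dd\tau$-decomposition: one must check that, under the canonical isomorphisms $\bar Y^{[n]} \cong \cI\times Y^{[n]}$ with all structure maps restricting to the identity on the $\cI$-factor (established in Section~\ref{subsec:bundlereduction}), the alternating sum of pullbacks defining $\bar\delta$ acts only on the $Y^{[n]}$-directions and passes through the wedge with $\dd\tau$ untouched. Once this commutation is in hand, the rest is the elementary linear-algebra fact that an equality of $2$-forms on a product $\cI\times N$ holds if and only if it holds separately in the $\dd\tau\wedge\Omega^1(N)$-summand and the $\Omega^2(N)$-summand. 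No estimate or nontrivial analysis is needed; the content is entirely the careful tracking of which differential acts where, together with the uniqueness of the decomposition \eqref{eq:decompositionbarb}, which itself was already asserted in the text and follows from the same product splitting.
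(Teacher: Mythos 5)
Your argument is exactly the paper's: substitute the decomposition $\bar{b} = \dd\tau\wedge a_{\tau} + b_{\tau}$ into $\bar{\delta}\bar{b} = \cF_{\bar{A}}$, invoke Lemma~\ref{lemma:gerbecurvaturedecomposition}, use that $\bar{\delta}$ passes through $\dd\tau\wedge(-)$ so that $\bar{\delta}\bar{b} = \dd\tau\wedge\delta a_{\tau} + \delta b_{\tau}$, and separate by tensor type to obtain $\delta b_{\tau} = \cF_{A_{\tau}}$ and \eqref{eq:conditionat}, with the converse by reversing the bookkeeping. Your explicit attention to the identification of $\bar{\delta}$ with $\delta$ under $\bar{Y}^{[n]}\cong\cI\times Y^{[n]}$ is a point the paper leaves implicit, but the proof is correct and follows the same route.
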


\begin{proof}
Plugging equation \eqref{eq:decompositionbarb} in $\bar{\delta} \bar{b} = F_{\bar{A}}$ we have:
\begin{equation*}
\bar{\delta}\bar{b} = \bar{\delta}(\dd \tau \wedge a_{\tau} + b_{\tau}) = \dd \tau \wedge \delta a_{\tau} + \delta b_{\tau} = \cF_{\bar{A}} = \dd_{Y^{[2]}} \Psi_{\tau} \wedge \dd \tau + \dd \tau \wedge \partial_{\tau} A_{\tau} + \cF_{A_{\tau}}
\end{equation*}

\noindent
where we have used Lemma \ref{lemma:gerbecurvaturedecomposition}. Isolating by tensor type, this gives:
\begin{equation*}
\delta b_{\tau} = F_{A_{\tau}}\, , \qquad \delta a_{\tau} = \partial_{\tau} A_{\tau} - \dd_{Y^{[2]}} \Psi_{\tau}
\end{equation*}

\noindent
and thus we conclude. 
\end{proof}

\begin{cor}
\label{cor:curvingreduction}
There is a natural one-to-one correspondence between curvings on a reducible bundle gerbe $(\bar{\cP},\bar{Y},\bar{A})$ with reduction $(\cP,Y,A_{\tau},\Psi_{\tau})$ and pairs $(a_{\tau},b_{\tau})$ consisting of a family $b_{\tau}$ of curvings on $(\cP,Y,A_{\tau})$ and a family of one-forms $a_{\tau}$ on $Y$ satisfying Equation \eqref{eq:conditionat}.
\end{cor}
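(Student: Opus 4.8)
The plan is to obtain the statement as an immediate consequence of Lemma~\ref{lemma:reduciblegerbe} together with the canonical splitting of differential forms on a cylinder. First I would record that the decomposition~\eqref{eq:decompositionbarb}, $\bar b = \dd\tau\wedge a_\tau + b_\tau$, defines a canonical bijection between the two-forms appearing as potential curvings on $(\bar\cP,\bar Y,\bar A)$ and the pairs consisting of a smooth family of one-forms $a_\tau$ and a smooth family of two-forms $b_\tau$ on $Y$: using the product structure $\bar Y = \cI\times Y$ and the Cartesian coordinate $\tau$, the family $a_\tau$ is recovered by contracting $\bar b$ with $\partial_\tau$ and restricting to the slices $\{\tau\}\times Y$, while $b_\tau$ is the restriction of $\bar b$ to the same slices; smoothness of $\bar b$ is equivalent to joint smoothness of $(a_\tau,b_\tau)$ in $\tau$. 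This assignment is canonical in the sense that it depends only on the reduction $(\cP,Y,A_\tau,\Psi_\tau)$, which fixes the splitting $\bar Y = \cI\times Y$ and hence the vector field $\partial_\tau$ and the projection onto $Y$.

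Next I would invoke Lemma~\ref{lemma:reduciblegerbe} to single out the relevant restriction of this bijection. That lemma states precisely that, under~\eqref{eq:decompositionbarb}, $\bar b$ is a curving on $(\bar\cP,\bar Y,\bar A)$ if and only if each $b_\tau$ is a curving on $(\cP,Y,A_\tau)$ and $a_\tau$ satisfies the constraint~\eqref{eq:conditionat}, namely $\delta a_\tau = \partial_\tau A_\tau - \dd_{Y^{[2]}}\Psi_\tau$. Restricting the bijection of the first step to curvings on the source side and to pairs $(a_\tau,b_\tau)$ of the stated type on the target side therefore produces the asserted one-to-one correspondence, and its naturality follows at once from the canonicity noted above.

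I do not expect a genuine obstacle here: the corollary is essentially a repackaging of Lemma~\ref{lemma:reduciblegerbe}. The only point that merits an explicit line is that the map $\bar b\mapsto(a_\tau,b_\tau)$ is well-defined independently of auxiliary choices; this is ensured by the rigidity of the product structure on $\bar Y$ together with the fact that, for each $\tau$, the affine space of curvings on $(\cP,Y,A_\tau)$ is modelled on two-forms pulled back from $\Sigma$, so that no global inconsistency can occur in patching the families $(a_\tau,b_\tau)$.
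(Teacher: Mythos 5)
Your proposal is correct and follows the same route as the paper: the corollary is stated there without a separate proof precisely because it is the immediate repackaging of Lemma~\ref{lemma:reduciblegerbe} together with the canonical, unique decomposition~\eqref{eq:decompositionbarb} of a two-form on $\bar{Y}=\cI\times Y$ into $\dd\tau\wedge a_\tau + b_\tau$. Your explicit remark that the bijection $\bar b\mapsto(a_\tau,b_\tau)$ is fixed by the product structure (contraction with $\partial_\tau$ and restriction to slices) is exactly the content the paper leaves implicit.
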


\noindent
Let $\bar{b} = \dd \tau \wedge a_{\tau} + b_{\tau}$ be a curving on a reducible bundle gerbe over $\cI\times \Sigma$. Then:
\begin{equation*}
\dd_{\bar{Y}}\bar{b} = \dd \tau \wedge (\partial_{\tau} b_{\tau} - \dd_Y a_{\tau}) + \dd_Y b_{\tau}
\end{equation*}

\noindent
where $\dd_{\bar{Y}}$ is the exterior derivative on $\bar{Y}$ and $\dd_Y$ is the exterior derivative on $Y$. 

\begin{lemma}
The family of two-forms $(\partial_{\tau} b_{\tau} - \dd_Y a_{\tau}) \in \Omega^2(Y)$ satisfies $\delta (\partial_{\tau} b_{\tau} - \dd_Y a_{\tau}) = 0$ for every $\tau \in \cI$, where $\delta$ denotes the simplicial differential of the \v{C}ech nerve of $\pi \colon Y\to M$. 
\end{lemma}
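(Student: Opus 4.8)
The plan is to compute $\delta\bigl(\partial_\tau b_\tau - \dd_Y a_\tau\bigr)$ directly, by commuting the simplicial differential $\delta$ past $\partial_\tau$ and past the exterior derivative, and then feeding in the two structural identities of Lemma~\ref{lemma:reduciblegerbe}. The computation is short and essentially formal; I do not expect a genuine obstacle, the only point requiring care being the bookkeeping of which ambient space a given form lives on once $\delta$ has been applied, i.e.\ that after $\delta$ the relevant exterior derivative is $\dd_{Y^{[2]}}$ rather than $\dd_Y$.

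First I would record the two elementary compatibilities that make the argument go through. The simplicial differential $\delta$ of the \v{C}ech nerve of $\pi\colon Y\to M$ is an alternating sum of pullbacks along face maps that do not touch the $\cI$-factor, so on smooth families of forms it commutes with the parameter derivative $\partial_\tau$; and $\delta$ commutes with the de Rham differential, since both are the two components of the total differential on the de Rham double complex of that \v{C}ech simplicial manifold. Granting these, we obtain
\begin{equation*}
\delta\bigl(\partial_\tau b_\tau - \dd_Y a_\tau\bigr) = \partial_\tau(\delta b_\tau) - \dd_{Y^{[2]}}(\delta a_\tau).
\end{equation*}

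Next I would substitute the identities $\delta b_\tau = F_{A_\tau}$ and $\delta a_\tau = \partial_\tau A_\tau - \dd_{Y^{[2]}}\Psi_\tau$ from Lemma~\ref{lemma:reduciblegerbe}, which gives
\begin{equation*}
\delta\bigl(\partial_\tau b_\tau - \dd_Y a_\tau\bigr) = \partial_\tau F_{A_\tau} - \dd_{Y^{[2]}}\partial_\tau A_\tau + \dd_{Y^{[2]}}\dd_{Y^{[2]}}\Psi_\tau.
\end{equation*}
The last term vanishes because $\dd^2=0$. Since the structure group $\U(1)$ is abelian, the curvature is simply $F_{A_\tau} = \dd_{Y^{[2]}}A_\tau$ (as also recorded in Lemma~\ref{lemma:gerbecurvaturedecomposition}), hence $\partial_\tau F_{A_\tau} = \dd_{Y^{[2]}}\partial_\tau A_\tau$ and the remaining two terms cancel. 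Therefore $\delta\bigl(\partial_\tau b_\tau - \dd_Y a_\tau\bigr)=0$ for every $\tau\in\cI$, as claimed. Equivalently, for each $\tau$ the two-form $\partial_\tau b_\tau - \dd_Y a_\tau$ descends along the submersion underlying the reduced gerbe, which is the form in which the lemma will be used in the decomposition of the curvature $H_{\bar b}$.
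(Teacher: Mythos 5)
Your proof is correct and follows essentially the same route as the paper's: commute $\delta$ past $\partial_\tau$ and the exterior derivative, substitute the identities $\delta b_\tau = \cF_{A_\tau}$ and $\delta a_\tau = \partial_\tau A_\tau - \dd_{Y^{[2]}}\Psi_\tau$ from Lemma \ref{lemma:reduciblegerbe}, and cancel using $\partial_\tau \cF_{A_\tau} = \dd_{Y^{[2]}}\partial_\tau A_\tau$ (abelian structure group) and $\dd^2 = 0$. You merely make explicit the commutation and curvature facts that the paper leaves implicit.
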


\begin{proof}
We compute:
\begin{equation*}
\delta (\partial_{\tau} b_{\tau} - \dd_Y a_{\tau}) = \partial_{\tau} \delta b_{\tau} - \dd_{Y^{[2]}} \delta a_{\tau} =  \partial_{\tau} \cF_{A_{\tau}}- \dd_{Y^{[2]}} (\partial_{\tau} A_{\tau} - \dd_{Y^{[2]}} \Psi_{\tau}) = 0
\end{equation*}

\noindent
where we have used Lemma \ref{lemma:reduciblegerbe}.
\end{proof}

\noindent
Since $\delta$ is exact, by the previous lemma, it follows that there exists a unique family of two-forms $\psi_{\tau} \in \Omega^2(\Sigma)$ on $\Sigma$ such that:
\begin{equation*}
\pi^{\ast} \psi_{\tau} = \partial_{\tau} b_{\tau} - \dd_Y a_{\tau}
\end{equation*}

\noindent
Since $\dd_{\bar{Y}} \bar{b} = \bar{\pi}^{\ast} H_{\bar{b}}$ for a uniquely determined three-form $\bar{H}_{\bar{b}}\in \Omega^3(M)$, it follows that:
\begin{equation}
\label{eq:decompositionH}
H_{\bar{b}} = \dd \tau \wedge \psi_{\tau} + H_{b_{\tau}}
\end{equation}

\noindent
where $H_{b_{\tau}}$ is the curvature of the family of curvings $b_{\tau}$. In particular:
\begin{eqnarray*}
\langle H_{\bar{b}} , H_{\bar{b}} \rangle_{\bar{g}} = \vert \psi_{\tau} \vert^2_{h_{\tau}}  + \vert H_{b_{\tau}} \vert^2_{h_{\tau}}  
\end{eqnarray*}

\noindent
Therefore, every curving $\bar{b}$ on a reducible bundle gerbe is equivalent to a pair $(a_{\tau} , b_{\tau})$ as introduced in Lemma \ref{lemma:reduciblegerbe}, and this pair defines in turn a family of two-forms $\psi_{\tau}$ on $\Sigma$ as described above. 
\begin{definition}
The family of two-forms $\psi_{\tau}$ is the \emph{derived} family of two-forms of the curving $\bar{b}$ on the reducible bundle gerbe $(\bar{\cP},\bar{A},\bar{Y})$.
\end{definition}

\noindent
The fact that $H_{\bar{b}}\in \Omega^3(\cI\times \Sigma)$ is closed translates into:
\begin{equation*}
\partial_{\tau} H_{b_{\tau}} = \dd_{\Sigma} \psi_{\tau}\, , \qquad \dd_{\Sigma} H_{b_{\tau}} = 0
\end{equation*}

\noindent
Since $b_{\tau}$ is a family of curvings on $(\cP,A,Y)$, the second condition holds automatically.  In particular, we have:
\begin{equation*}
H_{b_{\tau}}  = \int_0^{\tau} \dd_{\Sigma} c_{s} \dd s  + \omega
\end{equation*}

\noindent
for a time-independent integral closed three-form $\omega/2\pi \in\Omega^3(M)$.  
  
% % % % % % % % % % % % % % % % % % % % % % % % % % % % % % % % % % % % % % 

\subsection{The Cauchy evolution flow}

% % % % % % % % % % % % % % % % % % % % % % % % % % % % % % % % % % % % % % 

In this subsection, we characterize the gradient generalized Ricci solitons on a reducible bundle gerbe $(\bar{\cP} , \bar{A}, \bar{Y})$ on $\cI\times \Sigma$ in terms of solutions to a prescribed system of flow equations on the reduction $(\cP, Y , A_{\tau}  , \Psi_{\tau})$ of $(\bar{\cP} , \bar{A}, \bar{Y})$ to $\Sigma$. 

\begin{lemma}
Elements $(g , \bar{b}, \bar{\phi}) \in \Conf(\bar{\cP} , \bar{A}, \bar{Y})$ are in one-to-one correspondence with tuples $( h_{\tau} , a_{\tau} , b_{\tau} , \phi_{\tau})$ as introduced above.  
\end{lemma}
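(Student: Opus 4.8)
The plan is to prove the correspondence by decomposing a configuration triple $(g,\bar b,\bar\phi)$ into its three constituents and reducing each one separately along the product structure $M=\cI\times\Sigma$; since the three pieces of data are mutually independent and each reduction will be shown to be a bijection, their product is the asserted bijection. Crucially, this is a bijection \emph{relative to the fixed reduction data} $(\cP,Y,A_\tau,\Psi_\tau)$ of the ambient reducible bundle gerbe $(\bar\cP,\bar Y,\bar A)$.

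First I would dispose of the curving, which is the only piece requiring genuinely gerbe-theoretic input: it is exactly Corollary \ref{cor:curvingreduction}. Writing $\bar b=\dd\tau\wedge a_\tau+b_\tau$ as in \eqref{eq:decompositionbarb}, Lemma \ref{lemma:reduciblegerbe} identifies the condition ``$\bar b$ is a curving on $(\bar\cP,\bar Y,\bar A)$'' with ``$b_\tau$ is a smooth family of curvings on $(\cP,Y,A_\tau)$ and $a_\tau$ is a smooth family of one-forms on $Y$ satisfying the constraint \eqref{eq:conditionat}''. This is precisely the step in which the reduction data $(A_\tau,\Psi_\tau)$ enters, through the term $\partial_\tau A_\tau-\dd_{Y^{[2]}}\Psi_\tau$ on the right-hand side of \eqref{eq:conditionat}; since that bookkeeping has already been carried out, nothing further is needed here.

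Next I would reduce the metric and the dilaton. In the $\tau$-adapted (synchronous) presentation $\bar g=\dd\tau\otimes\dd\tau+h_\tau$ --- the form already used above in the splitting $\langle H_{\bar b},H_{\bar b}\rangle_{\bar g}=\vert\psi_\tau\vert^2_{h_\tau}+\vert H_{b_\tau}\vert^2_{h_\tau}$ --- the metric $\bar g$ is equivalent to the smooth family $h_\tau:=\iota_\tau^\ast\bar g\in\Met(\Sigma)$, where $\iota_\tau\colon\Sigma\hookrightarrow\{\tau\}\times\Sigma$ is the inclusion, and conversely $h_\tau$ recovers $\bar g$ uniquely; if one does not impose the synchronous gauge, the same argument produces in addition a lapse function and a shift one-form on $\Sigma$ for each $\tau$, which one simply carries along as part of the metric datum and which play no role in the bijection. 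The dilaton is immediate: since $C^\infty(M)=C^\infty(\cI\times\Sigma)$, the assignment $\bar\phi\mapsto(\phi_\tau:=\bar\phi\vert_{\{\tau\}\times\Sigma})$ is a bijection onto smooth families of functions on $\Sigma$.

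Finally I would assemble: the three correspondences operate on disjoint components of the data and each is bijective, so $(g,\bar b,\bar\phi)\mapsto(h_\tau,a_\tau,b_\tau,\phi_\tau)$ is the desired bijection. The only point meriting (routine) care --- and the closest thing to an obstacle in an otherwise bookkeeping argument --- is verifying that smoothness of $\bar g$, $\bar b$, $\bar\phi$ on the product $\cI\times\Sigma$ is equivalent to joint smoothness in $(\tau,x)$ of the associated $\tau$-families; this holds because every bundle in sight ($S^2T^\ast\Sigma$, the $\U(1)$-bundle $\cP\to Y^{[2]}$ together with its affine space of connections, and the trivial line bundle carrying the dilaton) is pulled back from $\Sigma$ along the projection $\cI\times\Sigma\to\Sigma$, so that sections over the product are exactly smoothly $\tau$-parametrised families of sections over $\Sigma$.
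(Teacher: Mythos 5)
Your proposal is correct and matches what the paper intends: the lemma is stated without an explicit proof, being immediate from the preceding reduction, and your bookkeeping—Corollary \ref{cor:curvingreduction} for the curving $\bar{b}=\dd\tau\wedge a_{\tau}+b_{\tau}$, the Gaussian-form metric $g=\dd\tau\otimes\dd\tau+h_{\tau}$ giving the family $h_{\tau}$, and restriction of $\bar{\phi}$ giving $\phi_{\tau}$—is exactly the decomposition the paper uses in the subsequent lemmata. Just note that the correspondence as stated presupposes the synchronous form of the metric (as the paper does throughout, e.g.\ Lemma \ref{lemma:hyperbolicderivatives}), so the aside about carrying lapse and shift along is best dropped rather than folded into the bijection.
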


\noindent
We will refer $( h_{\tau} , a_{\tau} , b_{\tau} , \phi_{\tau})$ as the \emph{reduction} of $(g , \bar{b}, \bar{\phi})$. We set:
\begin{equation*}
\Sigma_{\tau} := \left\{ \tau\right\}\times \Sigma \hookrightarrow M = \cI \times \Sigma\, , \qquad \Sigma := \left\{ 0\right\}\times \Sigma \hookrightarrow M 
\end{equation*}

\noindent
Tuples of the form $( h_{\tau} , a_{\tau} , b_{\tau} , \phi_{\tau})$ constitute the variables of the system of flow equations defined by gradient generalized Ricci solitons on $(\cP, Y , A_{\tau}  , \Psi_{\tau})$, that we present in Proposition \ref{prop:CylindricalGeneralizedSolitonSystem}. We denote by $\nabla^{h_{\tau}}$ the Levi-Civita connection on $(\Sigma_{\tau},h_{\tau})$. A direct computation gives the following formulae. 
\begin{lemma}
\label{lemma:hyperbolicderivatives}
Let $g =  \dd \tau \otimes \dd \tau + h_{\tau}$ be a Riemannian metric on $M = \cI\times \Sigma$. Then, for every family of one-forms $\beta_{\tau} \in \Omega^1(\Sigma)$ we have:
\begin{eqnarray*}
& \nabla^g \dd \tau =  \frac{1}{2 } \partial_{\tau} h_{\tau} \in \Gamma(T^{\ast}\Sigma_t\odot T^{\ast}\Sigma_t)\, , \quad  \nabla^g\beta_{\tau} =  \dd \tau \otimes \partial_{\tau} \beta_{\tau} - \frac{1}{2} \dd \tau \odot (\partial_{\tau} h_{\tau}) (\beta^{\sharp_{h_{\tau}}}_{\tau}) + \nabla^{h_{\tau}}\beta_{\tau}
%& \dd_{\nabla^g} \Theta_t =  \dd_{\nabla^{h_t}} \Theta_t  + \dd t  \wedge \partial_t\Theta_t + \dd t\wedge \Theta_t\circ_{h_t} \Theta_t 
\end{eqnarray*}

\noindent
where the superscript $\sharp_{h_{\tau}}$ denotes the musical isomorphism with respect to $h_{\tau}$.
\end{lemma}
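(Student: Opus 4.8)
The claim is a purely local Riemannian computation for a metric in cylindrical (Gaussian) normal form, so the plan is to extract the Christoffel symbols of $g = \dd\tau\otimes\dd\tau + h_\tau$ and substitute. First I would fix local coordinates $(x^i)$ on an open subset of $\Sigma$ and regard $(\tau, x^i)$ as coordinates on $\cI\times\Sigma$, so that $g_{00}=1$, $g_{0i}=0$ and $g_{ij}=(h_\tau)_{ij}$. Feeding this into the Koszul formula gives $\Gamma^0_{00}=\Gamma^i_{00}=\Gamma^0_{0i}=0$, $\Gamma^0_{ij}=-\tfrac12\partial_\tau(h_\tau)_{ij}$, $\Gamma^i_{0j}=\tfrac12(h_\tau)^{ik}\partial_\tau(h_\tau)_{kj}$, and $\Gamma^k_{ij}$ equal to the Christoffel symbols of the Levi-Civita connection $\nabla^{h_\tau}$ of the $\tau$-frozen metric on $\Sigma$. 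Equivalently, and coordinate-freely, one verifies via Koszul that $\nabla^g\partial_\tau$ annihilates $\partial_\tau$ and equals the $h_\tau$-dual of $\iota_V(\partial_\tau h_\tau)$, up to a factor $\tfrac12$, on vector fields $V$ tangent to $\Sigma$, while $\nabla^g_V W = \nabla^{h_\tau}_V W - \tfrac12(\partial_\tau h_\tau)(V,W)\,\partial_\tau$, using $[\partial_\tau, V]=0$, $g(\partial_\tau,\partial_\tau)\equiv 1$ and $\partial_\tau g(V,W) = (\partial_\tau h_\tau)(V,W)$.

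Granting this, the first identity is immediate: writing the $(0,2)$-tensor $\nabla^g\dd\tau$ in the frame above, $(\nabla^g\dd\tau)(\partial_a,\partial_b) = -\Gamma^0_{ab}$, whose only nonzero entries are the purely tangential ones $-\Gamma^0_{ij} = \tfrac12\partial_\tau(h_\tau)_{ij}$; hence $\nabla^g\dd\tau = \tfrac12\partial_\tau h_\tau$ as a symmetric section over $\Sigma_\tau$. For the second, I regard a family of one-forms $\beta_\tau\in\Omega^1(\Sigma)$ as a one-form on $\cI\times\Sigma$ with $\beta_0=0$ and evaluate $(\nabla^g\beta_\tau)(\partial_a,\partial_b) = \partial_a\beta_b - \Gamma^i_{ab}\beta_i$ block by block: the $(0,0)$-slot vanishes; the $(0,j)$-slot gives $\partial_\tau\beta_j - \tfrac12(\partial_\tau h_\tau)(\beta_\tau^{\sharp_{h_\tau}},\partial_j)$; the $(i,0)$-slot gives $-\tfrac12(\partial_\tau h_\tau)(\beta_\tau^{\sharp_{h_\tau}},\partial_i)$; and the $(i,j)$-slot gives $(\nabla^{h_\tau}\beta_\tau)(\partial_i,\partial_j)$, where I have used $(h_\tau)^{ik}\beta_i = (\beta_\tau^{\sharp_{h_\tau}})^k$. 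Reassembling these blocks, with the convention $\alpha\odot\gamma = \alpha\otimes\gamma+\gamma\otimes\alpha$, yields exactly $\dd\tau\otimes\partial_\tau\beta_\tau - \tfrac12\,\dd\tau\odot(\partial_\tau h_\tau)(\beta_\tau^{\sharp_{h_\tau}}) + \nabla^{h_\tau}\beta_\tau$, which is the asserted formula.

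There is no real obstacle here: everything is routine. The only points that demand a little care are bookkeeping — correctly symmetrizing the mixed $\dd\tau$-term, keeping track that in this adapted frame $\partial_\tau$ acting on tensor components is \emph{not} a covariant derivative, and making the convention for $\odot$ explicit so that the factors of $\tfrac12$ land exactly where the statement places them.
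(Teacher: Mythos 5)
Your computation is correct: the Christoffel symbols of $g=\dd\tau\otimes\dd\tau+h_\tau$ are as you state, and the block-by-block evaluation of $\nabla^g\dd\tau$ and $\nabla^g\beta_\tau$ (with $\alpha\odot\gamma=\alpha\otimes\gamma+\gamma\otimes\alpha$ and $\beta_\tau(\partial_\tau)=0$) reproduces exactly the stated formulas, consistent with how the paper later uses them in Lemmas \ref{lemma:Maxwellgh} and \ref{lemma:Dilatongh}. The paper offers no written proof beyond ``a direct computation,'' and yours is precisely that computation, so nothing is missing.
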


\begin{remark}
Note that $-2^{-1} \partial_{\tau} h_{\tau} $ is the shape operator or scalar second fundamental form of the embedded manifold $\Sigma_{\tau} \hookrightarrow M = \cI \times \Sigma$.
\end{remark}

\begin{lemma}
\label{lemma:Einsteingh}
A tuple $(g , \bar{b}, \bar{\phi}) \in \Conf(\bar{\cP} , \bar{A}, \bar{Y})$ satisfies the Einstein equation \eqref{eq:NSNSEinsteinframeEinstein} if and only if its reduction $( h_{\tau} , a_{\tau} , b_{\tau} , \phi_{\tau})$ satisfies the following differential system:
\begin{align*}
& \partial_{\tau}^2 h_{\tau} = 2 \Ric^{h_{\tau}} +  \partial_{\tau} h_{\tau} \circ_{h_{\tau}} \partial_{\tau} h_{\tau} - \frac{1}{2} \Tr_{h_{\tau}}(\partial_{\tau} h_{\tau}) \partial_{\tau} h_{\tau}   - \frac{2}{n-1}\,\dd_{\Sigma}\phi_{\tau} \otimes \dd_{\Sigma}\phi_{\tau}\\
& + \frac{2}{\,n-1\,} (e^{-\tfrac{4\phi}{n-1}} (\vert \psi_{\tau}\vert_{h_{\tau}}^2 + \vert H_{b_{\tau}}\vert_{g}^2) +   \lambda \, e^{ \frac{2\phi}{n-1}})\, h_{\tau} -  e^{-\tfrac{4\phi}{n-1}}\,(\psi_{\tau} \circ_{h_{\tau}} \psi_{\tau} + H_{b_{\tau}} \circ_{h_{\tau}} H_{b_{\tau}})  \\
&   \delta^{h_{\tau}}\partial_{\tau} h_{\tau} + \dd_\Sigma \Tr_{h_{\tau}}(\partial_{\tau} h_{\tau}) + \frac{2}{n-1}\, \partial_{\tau} \phi_{\tau}\, \dd_{\Sigma} \phi_{\tau} + e^{-\tfrac{4\phi}{n-1}}\,\psi_{\tau} \lrcorner_{h_{\tau}} H_{b_{\tau}} = 0\\
& s^{h_{\tau}} +  \frac{1}{4} \vert \partial_{\tau} h_{\tau} \vert_{h_{\tau}}^2 - \frac{1}{4} \Tr_{h_{\tau}}( \partial_{\tau} h_{\tau})^2  + \lambda   \, e^{ \frac{2\phi}{n-1}} + \frac{1}{n-1} ( (\partial_{\tau} \phi_{\tau})^2 - \vert \dd_{\Sigma}\phi_{\tau} \vert^2_{h_{\tau}})  +  \frac{1}{2} ( \vert \psi_{\tau} \vert^2_{h_{\tau}}   -  \vert H_{b_{\tau}} \vert^2_{h_{\tau}} ) e^{-\tfrac{4\phi}{n-1}}   = 0
\end{align*}

\noindent
where $\psi_{\tau}$ is the derived family of two-forms associated to $\bar{b} \in \Omega^2(\bar{Y})$. 
\end{lemma}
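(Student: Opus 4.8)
This is an ADM-type decomposition of the Einstein equation~\eqref{eq:NSNSEinsteinframeEinstein} relative to the Gaussian normal form $g = \dd\tau\otimes\dd\tau + h_\tau$ on $M = \cI\times\Sigma$. The plan is to expand every term of~\eqref{eq:NSNSEinsteinframeEinstein} into its three components with respect to the orthogonal splitting $TM = \R\,\partial_\tau \oplus T\Sigma_\tau$ --- the normal component $(\partial_\tau,\partial_\tau)$, the mixed component $(\partial_\tau,X)$ with $X\in\mathfrak{X}(\Sigma)$, and the tangential component $(X,Y)$ --- and then to identify the resulting three equations with the three equations of the stated system. Because the splitting is orthogonal and exhaustive, this matching gives a genuine \emph{equivalence}: the tangential component yields the second-order evolution equation for $h_\tau$, the mixed component the momentum constraint (the line containing $\delta^{h_\tau}\partial_\tau h_\tau$), and an appropriate combination of the normal component with the $g$-trace of~\eqref{eq:NSNSEinsteinframeEinstein} the Hamiltonian constraint (the line containing the scalar curvature $s^{h_\tau}$).

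\emph{Step 1: the curvature of $g$.} Building on Lemma~\ref{lemma:hyperbolicderivatives}, which supplies $\nabla^g\dd\tau = \tfrac12\partial_\tau h_\tau$ and the tangential/normal splitting of $\nabla^g\beta_\tau$, I would compute the Riemann and Ricci tensors of $g$; this amounts to the Gauss--Codazzi--Mainardi relations in Gaussian normal coordinates (unit lapse, vanishing shift). Setting $\mathrm{K}_\tau := \tfrac12\partial_\tau h_\tau$, one obtains the tangential identity
\[
\Ric^g(X,Y) = \Ric^{h_\tau}(X,Y) - \tfrac12\,\partial_\tau^2 h_\tau(X,Y) + \tfrac12\,(\partial_\tau h_\tau \circ_{h_\tau} \partial_\tau h_\tau)(X,Y) - \tfrac14\,\Tr_{h_\tau}(\partial_\tau h_\tau)\,\partial_\tau h_\tau(X,Y),
\]
a Codazzi identity expressing $\Ric^g(\partial_\tau,X)$ through $\delta^{h_\tau}\partial_\tau h_\tau$ and $\dd_\Sigma\Tr_{h_\tau}(\partial_\tau h_\tau)$, and a Riccati-type identity $\Ric^g(\partial_\tau,\partial_\tau) = -\tfrac12\Tr_{h_\tau}(\partial_\tau^2 h_\tau) + \tfrac14|\partial_\tau h_\tau|^2_{h_\tau}$. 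I would also record the contracted Gauss identity $s^g - 2\Ric^g(\partial_\tau,\partial_\tau) = s^{h_\tau} + \tfrac14|\partial_\tau h_\tau|^2_{h_\tau} - \tfrac14\Tr_{h_\tau}(\partial_\tau h_\tau)^2$; the point is that the second $\tau$-derivatives cancel in this combination, so it is first order in $\tau$ and therefore produces a bona fide constraint.

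\emph{Step 2: the matter terms.} Using $H_{\bar b} = \dd\tau\wedge\psi_\tau + H_{b_\tau}$ from~\eqref{eq:decompositionH}, with $H_{b_\tau}\in\Omega^3(\Sigma)$ carrying no $\dd\tau$-leg and $\psi_\tau$ the derived family of two-forms, one computes $\iota_{\partial_\tau}H_{\bar b} = \psi_\tau$ and $\iota_X H_{\bar b} = -\dd\tau\wedge\iota_X\psi_\tau + \iota_X H_{b_\tau}$, whence
\[
(H_{\bar b}\circ_g H_{\bar b})(\partial_\tau,\partial_\tau) = |\psi_\tau|^2_{h_\tau},\qquad (H_{\bar b}\circ_g H_{\bar b})(\partial_\tau,X) = (\psi_\tau\lrcorner_{h_\tau} H_{b_\tau})(X),
\]
\[
(H_{\bar b}\circ_g H_{\bar b})(X,Y) = (\psi_\tau\circ_{h_\tau}\psi_\tau + H_{b_\tau}\circ_{h_\tau}H_{b_\tau})(X,Y),\qquad |H_{\bar b}|^2_g = |\psi_\tau|^2_{h_\tau} + |H_{b_\tau}|^2_{h_\tau},
\]
the last equality already recorded in the text. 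Similarly $\dd\bar\phi = \partial_\tau\phi_\tau\,\dd\tau + \dd_\Sigma\phi_\tau$ splits $\dd\bar\phi\otimes\dd\bar\phi$ into its three evident pieces and gives $|\dd\bar\phi|^2_g = (\partial_\tau\phi_\tau)^2 + |\dd_\Sigma\phi_\tau|^2_{h_\tau}$, while $\tfrac{1}{n-1}(\cdots)\,g$ contributes $\tfrac{1}{n-1}(\cdots)\,h_\tau$ tangentially, $\tfrac{1}{n-1}(\cdots)$ normally, and nothing to the mixed part, where $(\cdots) = e^{-4\phi_\tau/(n-1)}(|\psi_\tau|^2_{h_\tau} + |H_{b_\tau}|^2_{h_\tau}) + \lambda\,e^{2\phi_\tau/(n-1)}$.

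\emph{Step 3: matching, and the main obstacle.} Substituting Steps~1--2 into~\eqref{eq:NSNSEinsteinframeEinstein}: the tangential component, solved for $\partial_\tau^2 h_\tau$, is precisely the first equation of the system; the mixed component multiplied by $-2$ is the momentum constraint; and the $g$-trace of~\eqref{eq:NSNSEinsteinframeEinstein} minus twice the normal component --- using $\Tr_g(H_{\bar b}\circ_g H_{\bar b}) = 3\,|H_{\bar b}|^2_g$, $\Tr_g g = n+1$, and the contracted Gauss identity --- is the Hamiltonian constraint. Conversely, the three equations of the system together recover all three tensor types of~\eqref{eq:NSNSEinsteinframeEinstein}, which closes the equivalence. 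The only genuinely delicate point is the bookkeeping: fixing once and for all the sign of $\mathrm{K}_\tau$ relative to $\partial_\tau h_\tau$, the sign conventions for the codifferential $\delta^{h_\tau}$ and for the contraction $\lrcorner_{h_\tau}$, and the normalization of the determinant metric on forms (responsible for the factor $3$ above and for the absence of combinatorial constants in $|H_{b_\tau}|^2_{h_\tau}$), and then verifying that all numerical coefficients --- the $\tfrac12$, $\tfrac14$, $\tfrac1{n-1}$, $\tfrac2{n-1}$ and the exponential weights --- fall out exactly as stated; the tangential identity above already illustrates the mechanism by which they do.
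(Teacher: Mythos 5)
Your proposal is correct and follows essentially the same route as the paper: decompose $\Ric^g$ and the matter terms of \eqref{eq:NSNSEinsteinframeEinstein} with respect to the splitting induced by $g=\dd\tau\otimes\dd\tau+h_\tau$ (using \eqref{eq:decompositionH} and Lemma \ref{lemma:hyperbolicderivatives}), read off the tangential component as the evolution equation, the mixed component as the momentum constraint, and obtain the scalar constraint from the trace combination in which $\Tr_{h_\tau}(\partial_\tau^2 h_\tau)$ cancels. The key identities you quote (the tangential Gauss identity, the Riccati formula for $\Ric^g(\partial_\tau,\partial_\tau)$, the contracted identity $s^g-2\Ric^g(\partial_\tau,\partial_\tau)$, and the component formulas for $H_{\bar b}\circ_g H_{\bar b}$) agree with those used in the paper's proof.
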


\begin{proof}
Let $(g,\bar{b},\bar{\phi})$ be a reducible gradient generalized Ricci soliton with reduction $( h_{\tau} , a_{\tau} , b_{\tau} , \phi_{\tau})$. A standard computation shows that the Ricci curvature tensor of $g = \dd\tau\otimes \dd\tau + h_{\tau}$ on $\cI\times \Sigma$ decomposes as follows:
\begin{eqnarray}
\label{eq:riccicylindrical}
& 2 \Ric^g   =  ( \frac{1}{2} \Tr_{h_{\tau}} (\partial_{\tau} h_{\tau} \circ_{h_{\tau}} \partial_{\tau} h_{\tau} ) -  \Tr_{h_{\tau}}( \partial_{\tau}^2 h_{\tau})) \, \dd \tau \otimes \dd\tau \\
& -   \dd\tau \odot  ( \delta^{h_{\tau}}\partial_{\tau} h_{\tau} + \dd_\Sigma \Tr_{h_{\tau}}(\partial_{\tau} h_{\tau}) ) + 2 \Ric^{h_{\tau}} +  \partial_{\tau} h_{\tau} \circ_{h_{\tau}} \partial_{\tau} h_{\tau} - \frac{1}{2} \Tr_{h_{\tau}}(\partial_{\tau} h_{\tau}) \partial_{\tau} h_{\tau} -  \partial_{\tau}^2 h_{\tau} \nonumber 
\end{eqnarray}

\noindent
Taking the trace, one obtains the following expression for the scalar curvature of $g$: 
\begin{equation}
\label{eq:scalarcylindrical}
s^g = s^{h_{\tau}} + \frac{3}{4}  \vert \partial_{\tau} h_{\tau} \vert_{h_{\tau}}^2 - \frac{1}{4} \Tr_{h_{\tau}} ( \partial_{\tau} h_{\tau} )^2 - \Tr_{h_{\tau}} ( \partial_{\tau}^2 h_{\tau})  
\end{equation}

\noindent
Furthermore:
\begin{eqnarray*}
s^g-2\Ric^g(\partial_{\tau},\partial_{\tau})  = s^{h_{\tau}}+\frac{1}{4}\vert \partial_{\tau} h_{\tau} \vert_{h_{\tau}}^2 - \frac{1}{4}\text{Tr}_{h_{\tau}}(\partial_{\tau}{h}_{\tau})^2
\end{eqnarray*}
We decompose the Einstein equation \eqref{eq:NSNSEinsteinframeEinstein} on $\cI\times \Sigma$ accordingly, that is:
\begin{eqnarray*}
& \Ric^{g}(\partial_{\tau} , \partial_{\tau}) + \frac{1}{\,n-1\,} (e^{-\tfrac{4\phi}{n-1}} \vert H_{\bar{b}} \vert_{g_E}^2 +   \lambda \, e^{ \frac{2\phi}{n-1}} - (\partial_{\tau}\phi)^2)  - \frac{1}{2} e^{-\tfrac{4\phi}{n-1}}\langle H_{\bar{b}}(\partial_{\tau}) ,  H_{\bar{b}} (\partial_{\tau})\rangle_g =   \\
& = \frac{1}{2} ( \frac{1}{2} \vert \partial_{\tau} h_{\tau} \vert_{h_{\tau}}^2 -  \Tr_{h_{\tau}}( \partial_{\tau}^2 h_{\tau})) + \frac{1}{\,n-1\,} ( \frac{3-n}{2} \vert \psi_{\tau} \vert^2_{h_{\tau}} e^{-\tfrac{4\phi_{\tau}}{n-1}} + \vert H_{b_{\tau}} \vert^2_{h_{\tau}} e^{-\tfrac{4\phi_{\tau}}{n-1}} +   \lambda \, e^{ \frac{2\phi_{\tau}}{n-1}} - (\partial_{\tau}\phi_{\tau})^2)   = 0\\
& \Ric^{g}(\partial_{\tau} )\vert_{T\Sigma_{\tau}}   - \frac{1}{n-1}\, \partial_{\tau} \phi_{\tau}\, \dd_{\Sigma} \phi_{\tau} - \frac{1}{2} e^{-\tfrac{4\phi}{n-1}}\,(H_{\bar{b}} \circ_{g} H_{\bar{b}}) (\partial_{\tau} )\vert_{T\Sigma_{\tau}} = \\
& = - \frac{1}{2} ( \delta^{h_{\tau}}\partial_{\tau} h_{\tau} + \dd_\Sigma \Tr_{h_{\tau}}(\partial_{\tau} h_{\tau}) )  - \frac{1}{n-1}\, \partial_{\tau} \phi_{\tau}\, \dd_{\Sigma} \phi_{\tau} - \frac{1}{2} e^{-\tfrac{4\phi_{\tau}}{n-1}}\,\psi_{\tau} \lrcorner_{h_{\tau}} H_{b_{\tau}} = 0\\
&  \Ric^{g}\vert_{T\Sigma_{\tau}\times T\Sigma_{\tau}}   - \frac{1}{n-1}\,\dd_{\Sigma}\phi\otimes \dd_{\Sigma}\phi + \frac{1}{\,n-1\,} (e^{-\tfrac{4\phi}{n-1}} \vert H_{\bar{b}}\vert_{g}^2 +   \lambda \, e^{ \frac{2\phi}{n-1}})\, h_{\tau} - \frac{1}{2} e^{-\tfrac{4\phi}{n-1}}\,(H_{\bar{b}} \circ_{g} H_{\bar{b}}) \vert_{T\Sigma_{\tau}\times T\Sigma_{\tau}} = \\
& =   \Ric^{h_{\tau}} +  \frac{1}{2} \partial_{\tau} h_{\tau} \circ_{h_{\tau}} \partial_{\tau} h_{\tau} - \frac{1}{4} \Tr_{h_{\tau}}(\partial_{\tau} h_{\tau}) \partial_{\tau} h_{\tau} - \frac{1}{2} \partial_{\tau}^2 h_{\tau} - \frac{1}{n-1}\,\dd_{\Sigma}\phi_{\tau} \otimes \dd_{\Sigma}\phi_{\tau}\\
& + \frac{1}{\,n-1\,} (e^{-\tfrac{4\phi_{\tau}}{n-1}} (\vert \psi_{\tau}\vert_{h_{\tau}}^2 + \vert H_{b_{\tau}}\vert_{g}^2) +   \lambda \, e^{ \frac{2\phi_{\tau}}{n-1}})\, h_{\tau} - \frac{1}{2} e^{-\tfrac{4\phi_{\tau}}{n-1}}\,(\psi_{\tau} \circ_{h_{\tau}} \psi_{\tau} + H_{b_{\tau}} \circ_{h_{\tau}} H_{b_{\tau}}) = 0
\end{eqnarray*}

\noindent
where we have repeatedly used Equation \eqref{eq:decompositionH}. Taking the trace of the $T\Sigma_{\tau}\times T\Sigma_{\tau}$ component of the Einstein equation \eqref{eq:NSNSEinsteinframeEinstein} we obtain: 
\begin{eqnarray*}
& s^{h_{\tau}} +  \frac{1}{2} \vert \partial_{\tau} h_{\tau} \vert_{h_{\tau}}^2 - \frac{1}{4} \Tr_{h_{\tau}}( \partial_{\tau} h_{\tau})^2 - \frac{1}{2} \Tr_{h_{\tau}}( \partial_{\tau}^2 h_{\tau}) - \frac{1}{\,n-1\,} \vert \dd_{\Sigma}\phi_{\tau} \vert^2_{h_{\tau}}\\ 
& + \frac{1}{\,n-1\,} ( \vert \psi_{\tau} \vert^2_{h_{\tau}} e^{-\tfrac{4\phi_{\tau}}{n-1}}  + \frac{3-n}{2} \vert H_{b_{\tau}} \vert^2_{h_{\tau}} e^{-\tfrac{4\phi_{\tau}}{n-1}} +   n \lambda \, e^{ \frac{2\phi_{\tau}}{n-1}} )  = 0
\end{eqnarray*}

\noindent
Combining this equation with the $\tau\tau$ component of the Einstein equation \eqref{eq:NSNSEinsteinframeEinstein} we obtain:
\begin{equation*}
s^{h_{\tau}} +  \frac{1}{4} \vert \partial_{\tau} h_{\tau} \vert_{h_{\tau}}^2 - \frac{1}{4} \Tr_{h_{\tau}}( \partial_{\tau} h_{\tau})^2  + \lambda   \, e^{ \frac{2\phi_{\tau}}{n-1}} + \frac{(\partial_{\tau} \phi_{\tau})^2 - \vert \dd_{\Sigma}\phi_{\tau} \vert^2_{h_{\tau}}}{n-1} +  \frac{1}{2} ( \vert \psi_{\tau} \vert^2_{h_{\tau}}   -  \vert H_{b_{\tau}} \vert^2_{h_{\tau}} ) e^{-\tfrac{4\phi_{\tau}}{n-1}}   = 0
\end{equation*}

\noindent
and thus we conclude. 
\end{proof}

\begin{lemma}
\label{lemma:Maxwellgh}
A tuple $(g , \bar{b}, \bar{\phi}) \in \Conf(\bar{\cP} , \bar{A}, \bar{Y})$ satisfies the Maxwell equation in \eqref{eq:NSNSEinsteinframeII} if and only if its reduction $( h_{\tau} , a_{\tau} , b_{\tau} , \phi_{\tau})$ satisfies the following differential system:
\begin{align*}
& \partial_{\tau} \psi_{\tau} = \delta^{h_{\tau}} H_{b_{\tau}} + \frac{4}{n-1} H_{b_{\tau}}(\dd_{\Sigma} \phi^{\sharp_{h_{\tau}}}) + \partial_{\tau}h_{\tau} \Delta_1 \psi_{\tau} + (\frac{4}{n-1} \partial_{\tau} \phi_{\tau} - \frac{1}{2} \Tr_{h_{\tau}}(\partial_{\tau} h_{\tau}) )\, \psi_{\tau}  \\
& \delta^{h_{\tau}} \psi_{\tau} + \frac{4}{n-1} \psi_{\tau}(\dd_{\Sigma} \phi^{\sharp_{h_{\tau}}}) = 0
\end{align*}
 
\noindent
where $\psi_{\tau}$ is the derived family of two-forms associated to $\bar{b} \in \Omega^2(\bar{Y})$.  
\end{lemma}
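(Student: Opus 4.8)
The plan is to insert the decomposition $H_{\bar b} = \dd\tau\wedge\psi_\tau + H_{b_\tau}$ of \eqref{eq:decompositionH} into the Maxwell equation $\delta^{g}H_{\bar b} + \tfrac{4}{n-1}\,H_{\bar b}(\dd\bar\phi^{\sharp_{g}}) = 0$ of \eqref{eq:NSNSEinsteinframeII}, for the cylindrical metric $g = \dd\tau\otimes\dd\tau + h_\tau$ of Lemma \ref{lemma:hyperbolicderivatives}, and then to separate the resulting identity into its $T\Sigma_\tau$-component and its $\dd\tau$-component. Since $\bar\phi = \phi_\tau$ is a function, $\dd\bar\phi = \partial_\tau\phi_\tau\,\dd\tau + \dd_\Sigma\phi_\tau$ and hence $\dd\bar\phi^{\sharp_{g}} = \partial_\tau\phi_\tau\,\partial_\tau + (\dd_\Sigma\phi_\tau)^{\sharp_{h_\tau}}$. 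The whole argument is a chain of equivalences, so both implications of the ``if and only if'' will follow simultaneously.

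The algebraic term is immediate: using $\iota_{\partial_\tau}\dd\tau = 1$, $\iota_{\partial_\tau}\psi_\tau = \iota_{\partial_\tau}H_{b_\tau} = 0$ and $\iota_X\dd\tau = 0$ for $X$ tangent to $\Sigma$, a direct interior-product computation gives
\begin{equation*}
H_{\bar b}(\dd\bar\phi^{\sharp_{g}}) = \big(\partial_\tau\phi_\tau\,\psi_\tau + H_{b_\tau}(\dd_\Sigma\phi^{\sharp_{h_\tau}})\big) - \dd\tau\wedge\psi_\tau(\dd_\Sigma\phi^{\sharp_{h_\tau}}),
\end{equation*}
so $\tfrac{4}{n-1}$ times this contributes $\tfrac{4}{n-1}\partial_\tau\phi_\tau\,\psi_\tau$ and $\tfrac{4}{n-1}H_{b_\tau}(\dd_\Sigma\phi^{\sharp_{h_\tau}})$ to the $T\Sigma_\tau$-part and $\tfrac{4}{n-1}\psi_\tau(\dd_\Sigma\phi^{\sharp_{h_\tau}})$ to the $\dd\tau$-part.

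The substantial computation is that of $\delta^{g}H_{\bar b}$ on the generalized cylinder $g = \dd\tau^2 + h_\tau$. I would compute it as $\delta^{g} = \pm\star_{g}\dd\,\star_{g}$, using the standard relations $\star_{g}\beta_\tau = \dd\tau\wedge\star_{h_\tau}\beta_\tau$ and $\star_{g}(\dd\tau\wedge\alpha_\tau) = \pm\star_{h_\tau}\alpha_\tau$ (with degree-dependent signs) for families of forms $\alpha_\tau,\beta_\tau$ on $\Sigma$, and handle the $\tau$-dependence of $h_\tau$ through the variation-of-Hodge-star identity
\begin{equation*}
\partial_\tau\big(\star_{h_\tau}\omega_\tau\big) = \star_{h_\tau}\Big(\partial_\tau\omega_\tau + \tfrac12\Tr_{h_\tau}(\partial_\tau h_\tau)\,\omega_\tau - (\partial_\tau h_\tau)\cdot\omega_\tau\Big),
\end{equation*}
where $(\partial_\tau h_\tau)\cdot$ is the derivation on forms obtained by raising the symmetric $2$-tensor $\partial_\tau h_\tau$ to an endomorphism via $h_\tau$; this derivation acting on $\psi_\tau$ is precisely the term written $\partial_\tau h_\tau\,\Delta_1\psi_\tau$ in the statement. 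Feeding $H_{\bar b} = \dd\tau\wedge\psi_\tau + H_{b_\tau}$ through $\star_{g}$, $\dd$, $\star_{g}$ then produces: from $\dd_\Sigma$ applied to the $H_{b_\tau}$-block, the term $\delta^{h_\tau}H_{b_\tau}$ in the $\Sigma$-component; from $\partial_\tau$ applied to the $\psi_\tau$-block, the terms $\partial_\tau\psi_\tau$, $-\partial_\tau h_\tau\,\Delta_1\psi_\tau$ and $\tfrac12\Tr_{h_\tau}(\partial_\tau h_\tau)\,\psi_\tau$ in the $\Sigma$-component; and from $\dd_\Sigma$ applied to the $\psi_\tau$-block, the term $\delta^{h_\tau}\psi_\tau$ in the $\dd\tau$-component. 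Equivalently, and with cleaner sign-bookkeeping, one can use $\delta^{g}\omega = -\sum_i\iota_{e_i}\nabla^{g}_{e_i}\omega$ in a frame $\{\partial_\tau,e_1,\dots,e_n\}$ adapted to the foliation and extend the Levi-Civita formulae of Lemma \ref{lemma:hyperbolicderivatives} from one-forms to three-forms by the Leibniz rule, the second fundamental form $-\tfrac12\partial_\tau h_\tau$ being the source of all the $\partial_\tau h_\tau$-corrections.

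Collecting terms by tensor type then finishes the proof: the vanishing of the $\dd\tau$-component of $\delta^{g}H_{\bar b} + \tfrac{4}{n-1}H_{\bar b}(\dd\bar\phi^{\sharp_{g}})$ is exactly $\delta^{h_\tau}\psi_\tau + \tfrac{4}{n-1}\psi_\tau(\dd_\Sigma\phi^{\sharp_{h_\tau}}) = 0$, and the vanishing of the $T\Sigma_\tau$-component, solved for $\partial_\tau\psi_\tau$, is exactly the stated evolution equation. The main obstacle is the codifferential computation on the non-product metric $\dd\tau^2 + h_\tau$: extracting cleanly the derivation term $(\partial_\tau h_\tau)\cdot\psi_\tau$ and the trace term $\tfrac12\Tr_{h_\tau}(\partial_\tau h_\tau)\psi_\tau$ out of $\partial_\tau\star_{h_\tau}$, and tracking the several Hodge-star signs; once the variation-of-Hodge-star identity is in hand, the remainder is routine bookkeeping, entirely parallel to the proof of Lemma \ref{lemma:Einsteingh}.
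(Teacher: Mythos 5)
Your strategy is essentially the paper's: insert the decomposition $H_{\bar b}=\dd\tau\wedge\psi_\tau+H_{b_\tau}$ into the Maxwell equation for $g=\dd\tau\otimes\dd\tau+h_\tau$, reduce the spacetime codifferential to spatial operations plus $\partial_\tau h_\tau$-corrections, and separate by tensor type. The paper carries this out exactly via your second, ``cleaner sign-bookkeeping'' alternative, namely $\delta^g\omega=-\iota_{\partial_\tau}\nabla^g_{\partial_\tau}\omega-\sum_i\iota_{e_i}\nabla^g_{e_i}\omega$ with Lemma \ref{lemma:hyperbolicderivatives} extended to higher-degree forms by Leibniz; your primary Hodge-star route with the variation-of-$\star_{h_\tau}$ identity is an equivalent way to organize the same computation, and your formula for $H_{\bar b}(\dd\bar\phi^{\sharp_g})$ agrees with the paper's.

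One caveat: the signs you assign to the $\tau$-derivative block are inconsistent with the conventions fixed by Lemma \ref{lemma:hyperbolicderivatives} and by the formula you are trying to prove. With the adapted-frame convention one finds
\begin{equation*}
\delta^gH_{\bar b}=\delta^{h_\tau}H_{b_\tau}-\partial_\tau\psi_\tau+\partial_\tau h_\tau\,\Delta_1\psi_\tau-\tfrac{1}{2}\Tr_{h_\tau}(\partial_\tau h_\tau)\,\psi_\tau-\dd\tau\wedge\delta^{h_\tau}\psi_\tau\, ,
\end{equation*}
whereas you list $+\partial_\tau\psi_\tau$, $-\partial_\tau h_\tau\Delta_1\psi_\tau$, $+\tfrac{1}{2}\Tr_{h_\tau}(\partial_\tau h_\tau)\psi_\tau$ in the $\Sigma$-component while keeping $+\delta^{h_\tau}H_{b_\tau}$; i.e.\ the relative sign between the $H_{b_\tau}$-block and the $\psi_\tau$-block is flipped. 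Followed literally, solving the $\Sigma$-component for $\partial_\tau\psi_\tau$ would produce the wrong signs on $\delta^{h_\tau}H_{b_\tau}$, $\frac{4}{n-1}H_{b_\tau}(\dd_\Sigma\phi^{\sharp_{h_\tau}})$ and $\frac{4}{n-1}\partial_\tau\phi_\tau\,\psi_\tau$, so it would not reproduce the stated evolution equation. In your Hodge-star route the correct relative sign arises from the minus picked up when $\dd_\Sigma$ passes $\dd\tau$ in $\dd\bigl(\dd\tau\wedge\star_{h_\tau}H_{b_\tau}\bigr)$ combined with the degree-dependent signs of $\star_g$ on $\dd\tau\wedge\Omega^{\bullet}(\Sigma)$ versus $\Omega^{\bullet}(\Sigma)$. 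This is a bookkeeping slip rather than a conceptual gap (you explicitly defer the sign-tracking), and once it is fixed the rest of your argument, including the chain-of-equivalences structure giving both directions at once, goes through as in the paper.
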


\begin{proof}
Let $(g,\bar{b},\bar{\phi})$ be a reducible gradient generalized Ricci soliton with reduction $( h_{\tau} , a_{\tau} , b_{\tau} , \phi_{\tau})$ satisfying the Maxwell equation in \eqref{eq:NSNSEinsteinframeII}. Using Equation \eqref{eq:decompositionH} together with Lemma \ref{lemma:hyperbolicderivatives}, we compute:
\begin{eqnarray*}
& \nabla^g_{\partial_{\tau}} H_{\bar{b}} = \nabla^g_{\partial_{\tau}} (\dd\tau \wedge \psi_{\tau} + H_{b_{\tau}}) = \dd\tau \wedge \nabla^g_{\partial_{\tau}}\psi_{\tau} + \nabla^g_{\partial_{\tau}} H_{b_{\tau}} \\
& = \dd\tau \wedge ( \partial_{\tau} \psi_{\tau} - \frac{1}{2}\partial_{\tau}h_{\tau} \Delta_1 \psi_{\tau}) + \partial_{\tau} H_{b_{\tau}} - \frac{1}{2} \partial_{\tau}h_{\tau} \Delta_1 H_{b_{\tau}} 
\end{eqnarray*}

\noindent
where, given any form $\alpha \in \Omega^{\bullet}(\Sigma)$, we have defined:
\begin{equation*}
 \partial_{\tau}h_{\tau} \Delta_1 \alpha = \sum_i (\partial_{\tau}h_{\tau})(e_i) \wedge \alpha(e_i)
\end{equation*}

\noindent
in terms of any orthonormal frame $(e_1 , \hdots , e_n)$. Similarly, for every $v\in T\Sigma$, we compute:
\begin{eqnarray*}
& \nabla^g_{v} H_{\bar{b}} = \nabla^g_{v} (\dd\tau \wedge \psi_{\tau} + H_{b_{\tau}}) = \nabla^g_{v} \dd\tau \wedge \psi_{\tau} + \dd\tau \wedge \nabla^g_{v}\psi_{\tau} + \nabla^g_{v} H_{b_{\tau}}\\
& = \frac{1}{2} (\partial_{\tau} h_{\tau})(v) \wedge \psi_{\tau} + \dd\tau \wedge \nabla^{h_{\tau}}_{v}\psi_{\tau} - \frac{1}{2} \dd \tau \wedge H_{b_{\tau}}(\partial_{\tau} h_{\tau} (v)) + \nabla^{h_{\tau}}_{v} H_{b_{\tau}}
\end{eqnarray*}

\noindent
From this, we obtain the divergence of $H_{\bar{g}}$:
\begin{eqnarray*}
& \delta^g H_{\bar{g}} = - (\nabla^g_{\partial_{\tau}} H_{\bar{b}})(\partial_{\tau}) - \sum_i (\nabla^g_{e_i} H_{\bar{b}})(e_i) \\
& =  \partial_{\tau}h_{\tau} \Delta_1 \psi_{\tau} - \partial_{\tau} \psi_{\tau} - \frac{1}{2} \Tr_{h_{\tau}}(\partial_{\tau} h_{\tau}) \Psi_{\tau} - \dd \tau \wedge \delta^{h_{\tau}} \psi_{\tau} + \delta^{h_{\tau}} H_{b_{\tau}}
\end{eqnarray*}

\noindent
On the other hand, we have:
\begin{equation*}
H_{\bar{b}}(\dd\phi^{\sharp_{g}}) = H_{\bar{b}}(\partial_{\tau} \phi_{\tau} \partial_{\tau} + \dd_{\Sigma} \phi^{\sharp_{h_{\tau}}}) = \partial_{\tau} \phi_{\tau}\, \psi_{\tau} - \dd \tau \wedge \psi_{\tau}(\dd_{\Sigma} \phi^{\sharp_{h_{\tau}}}) + H_{b_{\tau}}(\dd_{\Sigma} \phi^{\sharp_{h_{\tau}}})
\end{equation*}

\noindent
Hence:
\begin{eqnarray*}
& \delta^{g} H_{\bar{b}} + \frac{4}{n-1}  H_{\bar{b}}(\dd\phi^{\sharp_{g}}) = \delta^{h_{\tau}} H_{b_{\tau}} + \frac{4}{n-1} H_{b_{\tau}}(\dd_{\Sigma} \phi^{\sharp_{h_{\tau}}}) + \partial_{\tau}h_{\tau} \Delta_1 \psi_{\tau} - \partial_{\tau} \psi_{\tau} + (\frac{4}{n-1} \partial_{\tau} \phi_{\tau} - \frac{1}{2} \Tr_{h_{\tau}}(\partial_{\tau} h_{\tau}) )\, \psi_{\tau}\\
& - \dd \tau \wedge (\delta^{h_{\tau}} \psi_{\tau} + \frac{4}{n-1} \psi_{\tau}(\dd_{\Sigma} \phi^{\sharp_{h_{\tau}}}))   
\end{eqnarray*}

\noindent
and we conclude.
\end{proof}

\begin{lemma}
\label{lemma:Dilatongh}
A tuple $(g , \bar{b}, \bar{\phi}) \in \Conf(\bar{\cP} , \bar{A}, \bar{Y})$ satisfies the dilaton equation in \eqref{eq:NSNSEinsteinframeII} if and only if its reduction $( h_t , a_t , b_t , \phi_t)$ satisfies the following differential system:
\begin{equation*}
\partial_{\tau}^2 \phi_{\tau} = \delta^{h_{\tau}} \dd_{\Sigma}\phi_{\tau} - \frac{1}{2} \partial_{\tau} \phi_{\tau} \Tr_{h_{\tau}}(\partial_{\tau} h_{\tau}) - e^{-\frac{4\phi}{n-1}} (\vert \psi_{\tau}\vert^2_{h_{\tau}} + \vert H_{b_\tau}\vert^2_{h_{\tau}}) - \lambda e^{\frac{2\phi}{n-1}}
\end{equation*}

\noindent
where $\psi_{\tau}$ is the derived family of two-forms associated to $\bar{b} \in \Omega^2(\bar{Y})$. 
\end{lemma}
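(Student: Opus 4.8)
The plan is to mimic the proofs of Lemmas~\ref{lemma:Einsteingh} and~\ref{lemma:Maxwellgh}: decompose both sides of the dilaton equation in~\eqref{eq:NSNSEinsteinframeII} on $M = \cI\times\Sigma$ along the splitting $g = \dd\tau\otimes\dd\tau + h_\tau$ and match components. Since every object on $\cI\times\Sigma$ is recovered from its reduction, the resulting identity is pointwise in $\tau$, so the claimed equivalence is automatic once it is proven; there is no genuine analytic subtlety here, only the usual bookkeeping of cylindrical reductions.

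First I would write $\dd\bar{\phi} = \partial_\tau\phi_\tau\,\dd\tau + \dd_\Sigma\phi_\tau$ and apply Lemma~\ref{lemma:hyperbolicderivatives} to both summands, using $\nabla^g\dd\tau = \tfrac12\,\partial_\tau h_\tau$ and the formula for $\nabla^g\beta_\tau$ with $\beta_\tau = \dd_\Sigma\phi_\tau$. Taking the $g$-trace and recalling that $\delta^g = -\tr_g\nabla^g$ on $1$-forms (the sign convention used in the proof of Lemma~\ref{lemma:Maxwellgh}), this yields
\begin{equation*}
\delta^{g}\dd\bar{\phi} = -\,\partial_\tau^2\phi_\tau + \delta^{h_\tau}\dd_\Sigma\phi_\tau - \tfrac12\,\partial_\tau\phi_\tau\,\Tr_{h_\tau}(\partial_\tau h_\tau),
\end{equation*}
where the middle term is the $h_\tau$-trace of $\nabla^{h_\tau}\dd_\Sigma\phi_\tau$ and the last term comes from contracting the shape operator $-\tfrac12\,\partial_\tau h_\tau$ of $\Sigma_\tau\hookrightarrow M$ with $\partial_\tau\phi_\tau$. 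Equivalently, this is the codifferential-on-functions specialisation of the conformal/cylindrical identities already used for $\Ric^g$ and $\delta^g$ above.

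Next I would substitute the decomposition~\eqref{eq:decompositionH}, $H_{\bar{b}} = \dd\tau\wedge\psi_\tau + H_{b_\tau}$, into the right-hand side of the dilaton equation; because $g$ is a product metric and $\psi_\tau$, $H_{b_\tau}$ are $\tau$-families of forms on $\Sigma$, the determinant norm splits as $\vert H_{\bar{b}}\vert_g^2 = \vert\psi_\tau\vert_{h_\tau}^2 + \vert H_{b_\tau}\vert_{h_\tau}^2$, exactly as recorded immediately after~\eqref{eq:decompositionH}. Equating the two expressions for $\delta^g\dd\bar{\phi}$ and solving for $\partial_\tau^2\phi_\tau$ produces the stated second-order flow equation, and running the argument in reverse gives the converse. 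The only point that needs care is fixing the sign of $\delta^g$ on $1$-forms and correctly identifying the normal component of $\nabla^g_{e_i}e_i$ with $-\tfrac12(\partial_\tau h_\tau)(e_i,e_i)\,\partial_\tau$; the remaining manipulations are routine contractions.
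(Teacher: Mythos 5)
Your proposal is correct and follows essentially the same route as the paper: decompose $\dd\bar{\phi} = \partial_\tau\phi_\tau\,\dd\tau + \dd_\Sigma\phi_\tau$, apply Lemma \ref{lemma:hyperbolicderivatives} and the sign convention $\delta^g = -\mathrm{tr}_g\nabla^g$ to obtain $\delta^g\dd\bar{\phi} = \delta^{h_\tau}\dd_\Sigma\phi_\tau - \partial_\tau^2\phi_\tau - \tfrac12\,\partial_\tau\phi_\tau\,\Tr_{h_\tau}(\partial_\tau h_\tau)$, then use the splitting $\vert H_{\bar{b}}\vert_g^2 = \vert\psi_\tau\vert_{h_\tau}^2 + \vert H_{b_\tau}\vert_{h_\tau}^2$ from \eqref{eq:decompositionH} and solve for $\partial_\tau^2\phi_\tau$, exactly as in the paper. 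The only cosmetic slip is calling $g = \dd\tau\otimes\dd\tau + h_\tau$ a product metric (it is a generalized cylinder since $h_\tau$ depends on $\tau$), but the orthogonal splitting you actually use for the norm is still valid pointwise.
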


\begin{proof}
Let $(g,\bar{b},\bar{\phi})$ be a reducible gradient generalized Ricci soliton with reduction $( h_{\tau} , a_{\tau} , b_{\tau} , \phi_{\tau})$ satisfying the dilaton equation in \eqref{eq:NSNSEinsteinframeII}. Using Equation \eqref{eq:decompositionH} together with Lemma \ref{lemma:hyperbolicderivatives}, we compute:
\begin{eqnarray*}
& \nabla^g_{\partial_{\tau}} \dd \bar{\phi} = \nabla^g_{\partial_{\tau}} (\partial_{\tau} \phi_{\tau}\, \dd \tau + \dd_{\Sigma} \phi_{\tau}) = \partial_{\tau}^2 \phi_{\tau}\, \dd \tau +   \partial_{\tau} \dd_{\Sigma} \phi_{\tau} - \frac{1}{2}  (\partial_{\tau} h_{\tau}) (\dd_{\Sigma} \phi_{\tau}^{\sharp_{h_{\tau}}})  \\
& \nabla^g_{v} \dd \bar{\phi} = \nabla^g_{v} (\partial_{\tau} \phi_{\tau}\, \dd \tau + \dd_{\Sigma} \phi_{\tau}) = \nabla^{h_{\tau}} \dd_{\Sigma}\phi_{\tau} - \frac{1}{2} (\partial_{\tau} h_{\tau})(\dd_{\Sigma}\phi_{\tau}^{\sharp_{h_{\tau}}} , v) \dd\tau + v(\partial_{\tau} \phi_{\tau}) \dd \tau + \frac{1}{2}  \partial_{\tau} \phi_{\tau} (\partial_{\tau} h_{\tau})(v)  
\end{eqnarray*}

\noindent
Hence:
\begin{eqnarray*}
\delta^g \dd \bar{\phi} = - (\nabla^g_{\partial_{\tau}} \dd \bar{\phi})(\partial_{\tau}) - \sum_i (\nabla^g_{e_i} \dd \bar{\phi})(e_i)  = \delta^{h_{\tau}} \dd_{\Sigma}\phi_{\tau} - \partial_{\tau}^2 \phi_{\tau} - \frac{1}{2} \partial_{\tau} \phi_{\tau} \Tr_{h_{\tau}}(\partial_{\tau} h_{\tau})
\end{eqnarray*}

\noindent
and thus we conclude.
\end{proof}

\noindent
The previous lemmata allows to characterize reducible gradient generalized Ricci solitons $(g,\bar{b},\bar{\phi})$ on $(\bar{\cP},\bar{A},\bar{Y})$ in terms of constrained evolution flows for tuples $( h_{\tau} , a_{\tau} , b_{\tau} , \phi_{\tau})$ on the reduction $(\cP, Y , A_{\tau}  , \Psi_{\tau})$ of $(\bar{\cP},\bar{A},\bar{Y})$.  
\begin{prop}
\label{prop:CylindricalGeneralizedSolitonSystem}
A reducible tuple $(g , \bar{b}, \bar{\phi}) \in \Conf(\bar{\cP} , \bar{A}, \bar{Y})$ is a gradient Ricci soliton if and only if its reduction $( h_{\tau} , a_{\tau} , b_{\tau} , \phi_{\tau})$ on $(\cP, Y , A_{\tau}  , \Psi_{\tau})$ satisfies the following evolution equations:
\begin{align}
\partial_{\tau}^2 h_{\tau} & = 2 \Ric^{h_{\tau}} +  \partial_{\tau} h_{\tau} \circ_{h_{\tau}} \partial_{\tau} h_{\tau} - \frac{1}{2} \Tr_{h_{\tau}}(\partial_{\tau} h_{\tau}) \partial_{\tau} h_{\tau}   - \frac{2}{n-1}\,\dd_{\Sigma}\phi_{\tau} \otimes \dd_{\Sigma}\phi_{\tau} \nonumber \\
& + \frac{2}{\,n-1\,} (e^{-\tfrac{4\phi}{n-1}} (\vert \psi_{\tau}\vert_{h_{\tau}}^2 + \vert H_{b_{\tau}}\vert_{g}^2) +   \lambda \, e^{ \frac{2\phi}{n-1}})\, h_{\tau} -  e^{-\tfrac{4\phi}{n-1}}\,(\psi_{\tau} \circ_{h_{\tau}} \psi_{\tau} + H_{b_{\tau}} \circ_{h_{\tau}} H_{b_{\tau}}) \nonumber \\
\partial_{\tau} \psi_{\tau} & = \delta^{h_{\tau}} H_{b_{\tau}} + \frac{4}{n-1} H_{b_{\tau}}(\dd_{\Sigma} \phi^{\sharp_{h_{\tau}}}) + \partial_{\tau}h_{\tau} \Delta_1 \psi_{\tau} + (\frac{4}{n-1} \partial_{\tau} \phi_{\tau} - \frac{1}{2} \Tr_{h_{\tau}}(\partial_{\tau} h_{\tau}) )\, \psi_{\tau}   \label{eq:evolutionequations} \\
\partial_{\tau}^2 \phi_{\tau} & = \delta^{h_{\tau}} \dd_{\Sigma}\phi_{\tau} - \frac{1}{2} \partial_{\tau} \phi_{\tau} \Tr_{h_{\tau}}(\partial_{\tau} h_{\tau}) - e^{-\frac{4\phi}{n-1}} (\vert \psi_{\tau}\vert^2_{h_{\tau}} + \vert H_{b_\tau}\vert^2_{h_{\tau}}) - \lambda e^{\frac{2\phi}{n-1}}  \nonumber
\end{align}
together with the following system of time-dependent constraints:
\begin{align}
& s^{h_{\tau}} +  \frac{1}{4} \vert \partial_{\tau} h_{\tau} \vert_{h_{\tau}}^2 - \frac{1}{4} \Tr_{h_{\tau}}( \partial_{\tau} h_{\tau})^2  + \lambda    e^{ \frac{2\phi_{\tau}}{n-1}} + \frac{1}{n-1} ( (\partial_{\tau} \phi_{\tau})^2 - \vert \dd_{\Sigma}\phi_{\tau} \vert^2_{h_{\tau}})  +  \frac{1}{2} ( \vert \psi_{\tau} \vert^2_{h_{\tau}}   -  \vert H_{b_{\tau}} \vert^2_{h_{\tau}} ) e^{-\tfrac{4\phi_{\tau}}{n-1}}   = 0 \nonumber\\
&   \delta^{h_{\tau}}\partial_{\tau} h_{\tau} + \dd_\Sigma \Tr_{h_{\tau}}(\partial_{\tau} h_{\tau}) + \frac{2}{n-1}\, \partial_{\tau} \phi_{\tau}\, \dd_{\Sigma} \phi_{\tau} + e^{-\tfrac{4\phi_{\tau}}{n-1}}\,\psi_{\tau} \lrcorner_{h_{\tau}} H_{b_{\tau}} = 0 \label{eq:constraints} \\
& \delta^{h_{\tau}} \psi_{\tau} + \frac{4}{n-1} \psi_{\tau}(\dd_{\Sigma} \phi^{\sharp_{h_{\tau}}}) = 0\nonumber
\end{align}

\noindent
where $\psi_{\tau}$ is the derived family of two-forms associated to $\bar{b} \in \Omega^2(\bar{Y})$.
\end{prop}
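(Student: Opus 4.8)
The plan is to obtain Proposition~\ref{prop:CylindricalGeneralizedSolitonSystem} by assembling the three preceding reduction lemmas. Recall that, in the Einstein frame, $(g,\bar{b},\bar{\phi})$ is a gradient generalized Ricci soliton \emph{precisely} when it satisfies the Einstein equation \eqref{eq:NSNSEinsteinframeEinstein} together with the Maxwell and dilaton equations of \eqref{eq:NSNSEinsteinframeII}. First I would record that, by the correspondence between elements of $\Conf(\bar{\cP},\bar{A},\bar{Y})$ and tuples $(h_{\tau},a_{\tau},b_{\tau},\phi_{\tau})$ established above together with Corollary~\ref{cor:curvingreduction}, the mere fact that $(g,\bar{b},\bar{\phi})$ is a \emph{reducible} configuration already encodes that $b_{\tau}$ is a family of curvings on $(\cP,Y,A_{\tau})$ and that $a_{\tau}$ satisfies the compatibility condition \eqref{eq:conditionat}; in particular $\psi_{\tau}$ is the well-defined derived family of two-forms attached to $\bar{b}$, so none of this data reappears as an extra equation in the reduced system.

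Next I would invoke the three lemmas in turn. By Lemma~\ref{lemma:Einsteingh}, the Einstein equation \eqref{eq:NSNSEinsteinframeEinstein} is equivalent to the conjunction of its $\dd\tau\otimes\dd\tau$-component (the Hamiltonian-type constraint $s^{h_{\tau}}+\frac{1}{4}|\partial_{\tau}h_{\tau}|^2_{h_{\tau}}-\dots=0$), its mixed component (the momentum-type constraint $\delta^{h_{\tau}}\partial_{\tau}h_{\tau}+\dd_{\Sigma}\Tr_{h_{\tau}}(\partial_{\tau}h_{\tau})+\dots=0$), and its purely tangential component (the second-order evolution equation for $h_{\tau}$). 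By Lemma~\ref{lemma:Maxwellgh}, the Maxwell equation splits into the first-order evolution equation for $\psi_{\tau}$ and the constraint $\delta^{h_{\tau}}\psi_{\tau}+\frac{4}{n-1}\psi_{\tau}(\dd_{\Sigma}\phi^{\sharp_{h_{\tau}}})=0$. By Lemma~\ref{lemma:Dilatongh}, the dilaton equation becomes the second-order evolution equation for $\phi_{\tau}$. Since each of these lemmas is an equivalence, the gradient generalized Ricci soliton condition is equivalent to the conjunction of the six resulting equations.

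Finally I would sort these six equations into the two groups displayed in the statement: the three equations in which a top-order $\tau$-derivative of a dynamical field ($\partial_{\tau}^2 h_{\tau}$, $\partial_{\tau}\psi_{\tau}$, $\partial_{\tau}^2\phi_{\tau}$) is isolated constitute the evolution system \eqref{eq:evolutionequations}, while the Hamiltonian constraint, the Einstein momentum constraint, and the Maxwell constraint — none of which involves such a derivative — constitute the time-dependent constraint system \eqref{eq:constraints}; combining this with the reduction of configurations to tuples gives the claimed equivalence. I do not expect a genuine obstacle here: the analytic substance is entirely carried by Lemmas~\ref{lemma:Einsteingh}--\ref{lemma:Dilatongh}, and the only points needing care are (i) the bookkeeping of which decomposed equation counts as evolution versus constraint, and (ii) checking, as indicated above, that the curving/connective-structure compatibility data of Corollary~\ref{cor:curvingreduction} is subsumed in the notion of a reducible tuple and therefore does not contribute an additional equation to the reduced system.
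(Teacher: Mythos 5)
Your proposal is correct and coincides with the paper's own (implicit) argument: the proposition is stated there as the direct assembly of Lemmas \ref{lemma:Einsteingh}, \ref{lemma:Maxwellgh} and \ref{lemma:Dilatongh}, with the equations sorted into evolution versus constraint exactly as you describe, and the reducibility/Corollary \ref{cor:curvingreduction} bookkeeping handled the same way. No gap; your two points of care are precisely the ones the paper relies on.
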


\noindent
The previous proposition presents the evolution and time-dependent constraint equations, given respectively in  \eqref{eq:evolutionequations} and  \eqref{eq:constraints}, of the Cauchy problem for the gradient generalized Ricci soliton system on a bundle gerbe $(\cP,A,Y)$ defined on $\Sigma$. By restricting the time-dependent constraint equations to $\tau = 0$, we obtain the \emph{constraint equations} of this Cauchy problem, namely:
\begin{align}
& s^{h} +  \vert \Theta \vert_{h}^2 -  \Tr_{h}( \Theta)^2  + \lambda   \, e^{ \frac{2\phi}{n-1}} + \frac{1}{n-1} ( \rho^2 - \vert \dd \phi \vert^2_{h})  +  \frac{1}{2} ( \vert \psi \vert^2_{h}   -  \vert H_{b} \vert^2_{h} ) e^{-\tfrac{4\phi}{n-1}}   = 0 \label{eq:constraints1}\\
&   \delta^{h}\Theta + \dd \Tr_{h} (\Theta) + \frac{1}{n-1}\, \rho \, \dd \phi  + \frac{1}{2} e^{-\tfrac{4\phi}{n-1}}\,\psi  \lrcorner_{h} H_{b} = 0  \, , \qquad  \delta^{h} \psi + \frac{4}{n-1} \psi(\dd \phi^{\sharp_{h}}) = 0 \label{eq:constraints2}
\end{align}

\noindent
We consider these equations as a differential system for tuples $(h,\Theta , b,\psi,\phi,\rho)$ consisting of a Riemannian metric $h$ on $\Sigma$, a symmetric two-tensor $\Theta \in \Gamma(T^{\ast}\Sigma \odot T^{\ast}\Sigma)$, a curving $b$ on $(\cP,A,Y)$, a two-form $\psi \in \Omega^2(\Sigma)$ and a pair of functions $\phi , \rho \in C^{\infty}(\Sigma)$. Let $\Met(\Sigma)$ denote the convex cone of Riemannian metrics on $\Sigma$, and denote by $\Omega^2_{\cC}(Y)$ the affine space of curvings on $(\cP,A,\pi,\mu)$. Then, given a tuple $(h,\Theta , b,\psi,\phi,\rho)$ we observe:
\begin{eqnarray*}
\Theta \in T_h \Met(\Sigma) \, , \qquad \psi \in T_{b} \Omega^2_{\cC}(Y)\, , \qquad \rho \in T_{\phi} C^{\infty}(\Sigma)
\end{eqnarray*}

\noindent
where $T_h \Met(\Sigma)$ denotes the tangent bundle of $\Met(\Sigma)$ at $h\in \Met(M)$, and similarly for $\in T_{b} \Omega^2_{\cC}(Y)$ and $T_{\phi} C^{\infty}(\Sigma)$. Hence, we can give a geometric meaning to the tuples $(h,\Theta , b,\psi,\phi,\rho)$ by identifying them as elements of the following space:
\begin{equation*}
(h,\Theta) \oplus (b,\psi)\oplus (\phi,\rho) \in T(\Met(\Sigma) \times   \Omega^2_{\cC}(Y) \times   C^{\infty}(\Sigma))
\end{equation*}

\noindent
In particular, we conclude that the \emph{configuration space} of the constraint equations of the gradient generalized Ricci soliton system is $T(\Met(\Sigma) \times   \Omega^2_{\cC}(Y) \times   C^{\infty}(\Sigma))$. Clearly, equations \eqref{eq:constraints1} and \eqref{eq:constraints2} are necessary solutions for $(\cP, Y , A_{\tau}  , \Psi_{\tau})$ to admit a solution to the equations of Proposition \ref{prop:CylindricalGeneralizedSolitonSystem}.  We now show that, by the Cauchy-Kovalevskaya theorem, the system of evolution equations \eqref{eq:evolutionequations} has a unique solution in $\Sigma$ for short times. 

\begin{lemma}
\label{lemma:existencevolution}
Let $a_{\tau}$ be a fixed analytic family of one-forms on $Y$ satisfying \eqref{eq:conditionat}. Then, for every real analytic tuple $(h,\Theta,b,\psi,\phi,\rho) \in T(\Met(\Sigma) \times   \Omega^2_{\cC}(Y) \times   C^{\infty}(\Sigma))$ there exists a unique analytic germ  $( h_{\tau} , b_{\tau} , \phi_{\tau})$ such that $( h_{\tau} , a_{\tau}, b_{\tau} , \phi_{\tau})$ satisfies the evolution equations \eqref{eq:evolutionequations} on $(\cP, Y , A_{\tau}  , \Psi_{\tau})$ and restricts to $(h,\Theta,b,\psi,\phi,\rho)$ at $\tau = 0$, that is:
\begin{equation*}
h = h_{\tau}\vert_{\tau=0} \, , \quad \Theta = \frac{1}{2}\partial_{\tau} h_{\tau}\vert_{\tau = 0}   \, , \quad b = b_{\tau}\vert_{\tau = 0}\, , \quad \psi = \psi_{\tau}\vert_{\tau = 0}\, , \quad \phi = \phi_{\tau}\vert_{\tau = 0}\, , \quad \rho = \partial_{\tau}\phi_{\tau}\vert_{\tau = 0} 
\end{equation*}
\end{lemma}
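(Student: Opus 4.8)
The plan is to turn the evolution system \eqref{eq:evolutionequations} into a Cauchy--Kovalevskaya system, solve it, and then reconstruct the curving family $b_\tau$ by integrating in $\tau$. The only structural complication is that \eqref{eq:evolutionequations} involves $h_\tau$, $\phi_\tau$, the derived two-form $\psi_\tau$ and the \emph{curvature} $H_{b_\tau}$, all living on $\Sigma$, whereas $b_\tau$ lives on $Y$, and the identity $\partial_\tau H_{b_\tau}=\dd_\Sigma\psi_\tau$ (the closedness of $H_{\bar b}$) that couples the two is non-local in $\tau$. I therefore introduce an auxiliary family $H_\tau\in\Omega^3(\Sigma)$, adjoin the equation $\partial_\tau H_\tau=\dd_\Sigma\psi_\tau$, and replace $H_{b_\tau}$ by $H_\tau$ throughout. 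Introducing $\partial_\tau h_\tau$, $\partial_\tau\phi_\tau$ and the first spatial derivatives of all unknowns as further unknowns, the resulting system for $(h_\tau,\psi_\tau,H_\tau,\phi_\tau)$ takes the quasi-linear normal form $\partial_\tau U=\mathcal A(U)\,\partial_\Sigma U+\mathcal B(U)$; its only principal parts are $\Ric^{h_\tau}$, the Laplacian $\delta^{h_\tau}\dd_\Sigma\phi_\tau$, $\delta^{h_\tau}H_\tau$ and $\dd_\Sigma\psi_\tau$, and the coefficients $\mathcal A,\mathcal B$ are built from the musical isomorphisms, $\Tr_{h_\tau}$, $|\cdot|_{h_\tau}$, the connection coefficients and the analytic functions $e^{-4\phi_\tau/(n-1)}$, $e^{2\phi_\tau/(n-1)}$, hence are real-analytic on the open locus where $h_\tau$ is positive definite. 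Since $h$ is a Riemannian metric, $h_\tau$ stays positive definite for $\tau$ near $0$.

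Then I apply the Cauchy--Kovalevskaya theorem in real-analytic local coordinates on $\Sigma$, with the analytic Cauchy data $h_\tau|_{\tau=0}=h$, $\partial_\tau h_\tau|_{\tau=0}=2\Theta$, $\psi_\tau|_{\tau=0}=\psi$, $H_\tau|_{\tau=0}=H_b$, $\phi_\tau|_{\tau=0}=\phi$, $\partial_\tau\phi_\tau|_{\tau=0}=\rho$ (all real-analytic; $H_b$ is determined by $b$). This produces a unique real-analytic local solution; since \eqref{eq:evolutionequations} is tensorial, the chart solutions agree on overlaps by the uniqueness part of the theorem and patch to a real-analytic germ $(h_\tau,\psi_\tau,H_\tau,\phi_\tau)$ on a neighbourhood of $\{0\}\times\Sigma$.

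Next I reconstruct the curving by setting $b_\tau:=b+\int_0^\tau\bigl(\pi^\ast\psi_s+\dd_Y a_s\bigr)\,\dd s\in\Omega^2(Y)$, which is real-analytic because $\psi_s$ and $a_s$ are. Three verifications remain. First, $b_\tau$ is a family of curvings for $A_\tau$: differentiating $\delta b_\tau-F_{A_\tau}$ in $\tau$ and using $\delta\pi^\ast=0$ together with \eqref{eq:conditionat} gives $\partial_\tau(\delta b_\tau-F_{A_\tau})=0$, while $\delta b_0-F_{A_0}=\delta b-F_A=0$ since $b$ is a curving. Second, $H_\tau$ is the genuine curvature of $b_\tau$: $\pi^\ast H_{b_\tau}=\dd_Y b_\tau=\dd_Y b+\int_0^\tau\pi^\ast\dd_\Sigma\psi_s\,\dd s=\pi^\ast\bigl(H_b+\int_0^\tau\dd_\Sigma\psi_s\,\dd s\bigr)=\pi^\ast H_\tau$, and $\pi^\ast$ is injective. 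Third, $\psi_\tau$ is the derived family of $\bar b=\dd\tau\wedge a_\tau+b_\tau$, since $\partial_\tau b_\tau-\dd_Y a_\tau=\pi^\ast\psi_\tau$ by construction. With these, the equations for $(h_\tau,\psi_\tau,H_\tau,\phi_\tau)$ are precisely the evolution equations \eqref{eq:evolutionequations} for $(h_\tau,a_\tau,b_\tau,\phi_\tau)$, and the Cauchy data translate into the stated restrictions at $\tau=0$.

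For uniqueness, any solution $(h_\tau,b_\tau,\phi_\tau)$ of \eqref{eq:evolutionequations} with the given $a_\tau$ and initial data yields its derived two-form $\psi_\tau$ and the curvatures $H_\tau:=H_{b_\tau}$, which satisfy $\partial_\tau H_\tau=\dd_\Sigma\psi_\tau$; hence $(h_\tau,\psi_\tau,H_\tau,\phi_\tau)$ solves the same Cauchy--Kovalevskaya problem and, by uniqueness there, equals the germ above. Then $b_\tau$ is recovered uniquely from $\partial_\tau b_\tau=\pi^\ast\psi_\tau+\dd_Y a_\tau$ and $b_0=b$. The step I expect to require the most care is the first one: checking that, after the $H_\tau$-augmentation and the reduction to first order, \eqref{eq:evolutionequations} really is in Cauchy--Kovalevskaya normal form with analytic coefficients --- in particular that no right-hand side carries a spatial derivative of order exceeding the $\tau$-order of its equation, and that all metric-dependent coefficients are analytic on the positive-definite locus. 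The reconstruction of $b_\tau$ and the consistency checks are then short computations with the simplicial differential $\delta$ and the pullback $\pi^\ast$.
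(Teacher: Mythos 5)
Your proposal is correct, and its overall strategy coincides with the paper's: close the system so that Cauchy--Kovalevskaya applies, solve locally, patch by uniqueness, and then reconstruct the curving via $b_\tau=b+\int_0^\tau(\pi^\ast\psi_s+\dd_Y a_s)\,\dd s$, with uniqueness of the germ following from CK uniqueness plus this reconstruction formula. The one genuine difference is how you close the system before invoking CK: the paper introduces the potential $\theta_\tau=\int_0^\tau\psi_s\,\dd s+\theta_0$, so that $\psi_\tau=\partial_\tau\theta_\tau$ and, on contractible charts, $H_{b_\tau}$ is expressed through $\dd_\Sigma\theta_\tau$ plus a fixed closed form, yielding a second-order system in $(h_\tau,\theta_\tau,\phi_\tau)$; you instead keep $\psi_\tau$ as a first-order unknown and adjoin the auxiliary global $3$-form $H_\tau$ with $\partial_\tau H_\tau=\dd_\Sigma\psi_\tau$. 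Your variant stays entirely global and tensorial (so the patching step is immediate), at the price of having to verify a posteriori that $H_\tau$ is indeed the curvature of the reconstructed $b_\tau$ and that $b_\tau$ is a curving for $A_\tau$ --- both of which you carry out correctly, the latter via \eqref{eq:conditionat}, a check the paper leaves implicit through its reduction lemmas; the paper's potential $\theta_\tau$ buys a system that is CK-ready in one stroke but requires the slightly awkward local expression of $H_{b_\tau}$. Your flagged worry about the normal form is unfounded: the augmented system satisfies the hypotheses of the multi-order Cauchy--Kovalevskaya theorem directly (each equation of $\tau$-order $m$ has right-hand side involving derivatives of total order at most the corresponding $m_i$ and $\tau$-order strictly less), so the first-order reduction, with its standard constraint-propagation check, is not even needed. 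As a minor point, your $\dd_Y a_s$ in the reconstruction is the correct reading, since $a_s\in\Omega^1(Y)$.
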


\begin{proof}
Let $a_{\tau}$ be a fixed analytic family of one-forms $a_{\tau}$ on $Y$ satisfying \eqref{eq:conditionat}. For every analytic family $( h_{\tau} , b_{\tau} , \phi_{\tau})$, we introduce the new variable:
\begin{equation*}
\theta_{\tau} = \int_0^{\tau} \psi_{s} \, \dd s + \theta_0
\end{equation*}

\noindent
for a two-form $\theta_0 \in \Omega^2(\Sigma)$, where $\psi_s$ denotes the derived family of two-forms associated with $( h_{\tau} , a_{\tau} , b_{\tau} , \phi_{\tau})$. We fix a good open cover $(U_r)$ of $\Sigma$ for some index $r$, and we consider the evolution equations \eqref{eq:evolutionequations} restricted to each $U_r$. Since each $U_r$ is contractible, we have:
\begin{equation*}
H_{b_{\tau}} = \dd_{\Sigma} b_{\tau} = \dd_{\Sigma} \theta_{\tau} + \dd_{\Sigma} \theta_0
\end{equation*}

\noindent
where we have used that $\psi_{\tau} = \partial_{\tau} b_{\tau} - \dd_{\Sigma} a_{\tau}$ on $U_r$. Using that $\psi_{\tau} = \partial_{\tau} \theta_{\tau}$, on each $U_r$ we can consider \eqref{eq:evolutionequations} as a system of evolution equations of the form:
\begin{eqnarray*}
& \partial_{\tau}^2 h_{\tau} = \cF_1(h_{\tau} , \partial_{\tau} h_{\tau}, \theta_{\tau} , \partial_{\tau} \theta_{\tau} , \phi_{\tau} , \partial_{\tau} \phi_{\tau})\\
& \partial_{\tau}^2 \theta_{\tau} = \cF_2(h_{\tau} , \partial_{\tau} h_{\tau}, \theta_{\tau} , \partial_{\tau} \theta_{\tau} , \phi_{\tau} , \partial_{\tau} \phi_{\tau})\\
& \partial_{\tau}^2 \phi_{\tau} = \cF_3(h_{\tau} , \partial_{\tau} h_{\tau}, \theta_{\tau} , \partial_{\tau} \theta_{\tau} , \phi_{\tau} , \partial_{\tau} \phi_{\tau})
\end{eqnarray*}

\noindent
for certain analytic functions $\cF_1$, $\cF_2$ and $\cF_3$ of the variables $( h_{\tau} , \psi_{\tau} , \phi_{\tau})$ and for given initial data $(h,\Theta,\theta,\psi,\phi,\rho)$. By the Cauchy-Kovalevskaya theorem, see for example \cite{Folland}, for analytic initial data $(h,\Theta,\theta,\psi,\phi,\rho)$, we obtain a unique analytic solution $( h_{\tau} , \psi_{\tau} , \phi_{\tau})_o$ on each $U_r$ for some interval $\cI_r$ containing zero. By the uniqueness of each solution on $U_o$, they must coincide on the overlap of pairs of open sets in $(U_r)$, and therefore we obtain a globally defined solution $( h_{\tau} , \psi_{\tau} , \phi_{\tau})$ on $\Sigma$ for some interval $\cI$ containing zero. Note that, crucially here, both $\theta$ and $\psi$ globally defined two-forms on $\Sigma$. Given such global solution $( h_{\tau} , \psi_{\tau} , \phi_{\tau})$, and the fixed analytic family $a_{\tau}$, we set:
\begin{equation*}
b_{\tau} = \int_o^{\tau} (\pi^{\ast} \psi_s + \dd_{\Sigma} a_{s})\, \dd s + b \in \Omega^2(Y)
\end{equation*}

\noindent
and consequently we obtain a solution $( h_{\tau} , a_{\tau} , b_{\tau} , \phi_{\tau})$ solving \eqref{eq:evolutionequations} that restricts to $(h,\Theta,b,\psi,\phi,\rho)$ at $\tau = 0$. Due to the uniqueness of the local solutions on each open set $U_r$ together with the previous formula determining $b_{\tau}$ uniquely, we conclude that the germ of the solution $( h_{\tau} , a_{\tau} , b_{\tau} , \phi_{\tau})$ is unique for a fixed $a_{\tau}$.
\end{proof}

\noindent
We study now the compatibility of solutions to the evolution equations \eqref{eq:evolutionequations} with the time-dependent constraints \eqref{eq:constraints} in order to establish the well-posedness of the analytic Cauchy problem for the gradient generalized Ricci soliton system.

\begin{thm}
\label{thm:CylindricalWellPosedness}
Let $\Sigma$ be an oriented $n$-dimensional manifold equipped with a bundle gerbe $(\cP,Y)$ with analytic connective structure $A$. For every solution $(h,\Theta , b,\psi,\phi,\rho)$ of the constraint equations \eqref{eq:constraints1} and \eqref{eq:constraints2} on $(\cP,A,Y)$, there exists an $(n+1)$-dimensional oriented manifold $M$, a bundle gerbe with connective structure $(\bar{\cP} , \bar{\cA} , \bar{Y})$ on $M$ inducing $A$ on $(P,Y)$, an embedding $\Sigma  \subset M$ and a gradient generalized Ricci soliton $(g,\bar{b},\bar{\phi})$ around $\Sigma \subset M$ and such that:
\begin{equation*}
h =  g\vert_{\Sigma}\, , \qquad b = \bar{b}\vert_Y\, , \qquad \phi = \bar{\phi} \vert_{\Sigma}
\end{equation*}

\noindent
and such that $\Theta$ is the second fundamental form of the embedding $(\Sigma , h)\subset (M,g)$.
\end{thm}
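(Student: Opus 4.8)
The strategy is to run the standard Cauchy-Kovalevskaya-plus-constraint-propagation argument, now in the gerbe setting, combining Lemma~\ref{lemma:existencevolution} with the conservation of the constraint functions \eqref{eq:constraints}. Given a solution $(h,\Theta,b,\psi,\phi,\rho)$ of \eqref{eq:constraints1}--\eqref{eq:constraints2} on $(\cP,A,Y)$, I would first \emph{choose the time-dependent gerbe data} $(A_\tau,\Psi_\tau)$ and the auxiliary family $a_\tau$. Since $A$ is analytic, simply set $A_\tau := A$, $\Psi_\tau := 0$, and $a_\tau := 0$; then \eqref{eq:conditionat} reads $0 = \partial_\tau A_\tau - \dd_{Y^{[2]}}\Psi_\tau = 0$, which holds, so $a_\tau$ is an admissible analytic family of one-forms. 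By Proposition~\ref{prop:CylindricalGeneralizedSolitonSystem}, this fixes the reducible bundle gerbe $(\bar{\cP},\bar{\cA},\bar Y)$ over $M = \cI\times\Sigma$ corresponding to the reduction $(\cP,Y,A_\tau,\Psi_\tau)$, and it restricts to $A$ on $(\cP,Y)$ at $\tau=0$ by construction. The orientation on $M=\cI\times\Sigma$ is the product orientation.

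Next I would \emph{solve the evolution equations}. Applying Lemma~\ref{lemma:existencevolution} to the analytic initial data $(h,\Theta,b,\psi,\phi,\rho)$ with the fixed family $a_\tau = 0$, we obtain a unique analytic germ $(h_\tau,a_\tau,b_\tau,\phi_\tau)$ solving the evolution system \eqref{eq:evolutionequations} on $(\cP,Y,A_\tau,\Psi_\tau)$ with $h_\tau|_{\tau=0}=h$, $\tfrac12\partial_\tau h_\tau|_{\tau=0}=\Theta$, $b_\tau|_{\tau=0}=b$, $\psi_\tau|_{\tau=0}=\psi$, $\phi_\tau|_{\tau=0}=\phi$, $\partial_\tau\phi_\tau|_{\tau=0}=\rho$. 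Shrinking $\cI$ if necessary, this data assembles—via Corollary~\ref{cor:curvingreduction} and the reduction correspondences of \S\ref{subsec:bundlereduction}—into a tuple $(g,\bar b,\bar\phi)\in\Conf(\bar{\cP},\bar{\cA},\bar Y)$ with $g = \dd\tau\otimes\dd\tau + h_\tau$, whose restrictions to $\Sigma=\{0\}\times\Sigma$ are exactly $h$, $b$, $\phi$, and whose second fundamental form at $\Sigma$ is $\Theta$ by the remark following Lemma~\ref{lemma:hyperbolicderivatives}.

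The \emph{main obstacle}, and the heart of the theorem, is showing that this $(g,\bar b,\bar\phi)$ is an honest gradient generalized Ricci soliton, i.e.\ that it satisfies not only the evolution equations \eqref{eq:evolutionequations} but also the \emph{time-dependent} constraints \eqref{eq:constraints} for all $\tau$, not merely at $\tau=0$. By Proposition~\ref{prop:CylindricalGeneralizedSolitonSystem} this is equivalent to $(g,\bar b,\bar\phi)$ solving the full system \eqref{eq:NSNSEinsteinframeEinstein}--\eqref{eq:NSNSEinsteinframeII}. The standard mechanism is to define the three constraint quantities $\mathcal{C}_0(\tau)$ (Hamiltonian/scalar constraint), $\mathcal{C}_1(\tau)$ (momentum constraint), $\mathcal{C}_2(\tau)$ (the $\psi$-divergence constraint) as the left-hand sides of the displayed equations in \eqref{eq:constraints}, and to show, using the evolution equations together with the contracted second Bianchi identity for $\Ric^g$ and $\dd^2 = 0$ / $\delta$-identities for forms, that $(\mathcal{C}_0,\mathcal{C}_1,\mathcal{C}_2)$ satisfy a \emph{closed} first-order homogeneous linear evolution system in $\tau$ with analytic coefficients. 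Since the initial data was chosen to satisfy the constraints, $(\mathcal{C}_0,\mathcal{C}_1,\mathcal{C}_2)|_{\tau=0}=0$, and by uniqueness for this linear ODE system (again Cauchy-Kovalevskaya, or simply Gr\"onwall in the analytic category) they vanish identically. These are precisely the long computations the authors defer to the appendix; I would carry them out by differentiating each $\mathcal{C}_i$ in $\tau$, substituting \eqref{eq:evolutionequations} wherever $\partial_\tau^2 h_\tau$, $\partial_\tau\psi_\tau$, $\partial_\tau^2\phi_\tau$ appear, and organizing the result so that all non-$\mathcal{C}_i$ terms cancel via Bianchi/commutation identities. Once constraint propagation is established, Proposition~\ref{prop:CylindricalGeneralizedSolitonSystem} gives that $(g,\bar b,\bar\phi)$ is a gradient generalized Ricci soliton around $\Sigma\subset M$, and all the claimed restriction identities hold by construction, completing the proof.
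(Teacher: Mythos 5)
Your proposal is correct and follows essentially the same route as the paper: solve the evolution system \eqref{eq:evolutionequations} via Lemma \ref{lemma:existencevolution} for the given analytic data, assemble $(g,\bar b,\bar\phi)$ on $M=\cI\times\Sigma$, and establish constraint propagation by showing that the constraint quantities obey a homogeneous linear first-order system (the deferred appendix computations) whose unique analytic solution with vanishing initial data is zero, invoking Cauchy--Kovalevskaya uniqueness exactly as the paper does. Your concrete choice $A_\tau=A$, $\Psi_\tau=0$, $a_\tau=0$ is a harmless specialization of the paper's ``fix analytic families satisfying \eqref{eq:conditionat}''; only note that the propagated system is a linear \emph{PDE} system (it contains $\dd_\Sigma C_{2\tau}$ and $\delta^{h_\tau}C_{1\tau}$), so the uniqueness step really rests on Cauchy--Kovalevskaya in the analytic category rather than a Gr\"onwall-type ODE argument.
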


\begin{remark}
Identifying a small enough tubular neighborhood of $\Sigma\subset M$ with $\cI\times\Sigma$ and fixing families $a_{\tau}$, $A_{\tau}$ and $\Psi_{\tau}$ as introduced in Lemma \ref{lemma:existencevolution}, in the previous theorem we obtain a unique germ of a gradient generalized Ricci soliton that recovers $(h,\Theta , b,\psi,\phi,\rho)$ at $\tau = 0$.
\end{remark}

\begin{proof}
On $(\cP,A,Y)$ we fix an analytic family $\Psi_{\tau}$ of functions, an analytic family of connective structures $A_{\tau}$ and analytic family of one-forms $a_{\tau} \in \Omega^1(Y)$, such that $A = A_{\tau}\vert_{\tau = 0}$ and such that Equation \eqref{eq:conditionat} holds. By Lemma \ref{lemma:existencevolution} we can assume that there exists a family $(h_{\tau},a_{\tau},b_{\tau},\phi_{\tau})$ solving the evolution equations \eqref{eq:evolutionequations} with initial data $(h,\Theta , b,\psi,\phi,\rho)$. By Proposition \ref{prop:CylindricalGeneralizedSolitonSystem} it is enough to prove that the time-dependent constraints \eqref{eq:constraints} are satisfied by such $(h_{\tau},a_{\tau},b_{\tau},\phi_{\tau})$, in which case we obtain the desired gradient generalized Ricci soliton by taking $M = \cI\times \Sigma$, pulling back $(\cP,Y,A_{\tau},\Psi_{\tau})$ to $M$ and setting:
\begin{equation*}
g = \dd\tau\otimes \dd \tau + h_{\tau}\, , \qquad \bar{b} = \dd \tau \wedge a_{\tau} + b_{\tau}\, , \qquad \bar{\phi} = \phi_{\tau}
\end{equation*}

\noindent
Given, $\Psi_{\tau}$, $A_{\tau}$ and $a_{\tau}$ as especified above, for every $(h_{\tau},b_{\tau},\phi_{\tau})$ such that $(h_{\tau},a_{\tau},b_{\tau},\phi_{\tau})$ satisfies the evolution equations \eqref{eq:evolutionequations}, we use the time-dependent constraint equations \eqref{eq:constraints} to define: 
\begin{align*}
C_{1\tau} &:= \delta^{h_{\tau}}\partial_{\tau} h_{\tau} + \dd_\Sigma \Tr_{h_{\tau}}(\partial_{\tau} h_{\tau}) + \frac{2}{n-1}\, \partial_{\tau} \phi_{\tau}\, \dd_{\Sigma} \phi_{\tau} + e^{-\tfrac{4\phi_{\tau}}{n-1}}\,\psi_{\tau} \lrcorner_{h_{\tau}} H_{b_{\tau}} \\
C_{2\tau} &:= s^{h_{\tau}} +  \frac{1}{4} \vert \partial_{\tau} h_{\tau} \vert_{h_{\tau}}^2 - \frac{1}{4} \Tr_{h_{\tau}}( \partial_{\tau} h_{\tau})^2  + \lambda    e^{ \frac{2\phi_{\tau}}{n-1}} + \frac{1}{n-1} ( (\partial_{\tau} \phi_{\tau})^2 - \vert \dd_{\Sigma}\phi_{\tau} \vert^2_{h_{\tau}})  +  \frac{1}{2} ( \vert \psi_{\tau} \vert^2_{h_{\tau}}   -  \vert H_{b_{\tau}} \vert^2_{h_{\tau}} ) e^{-\tfrac{4\phi_{\tau}}{n-1}}\\
C_{3\tau} &:= \delta^{h_{\tau}} \psi_{\tau} + \frac{4}{n-1} \psi_{\tau}(\dd_{\Sigma} \phi^{\sharp_{h_{\tau}}})
\end{align*}

\noindent
Hence, we need to show that $C_{1\tau} = C_{2\tau} = C_{3\tau} = 0$. A series of long computations, that we defer to Appendix \ref{app:calculos}, shows that $C_{1\tau}$, $C_{2\tau}$, and $C_{3\tau}$ satisfy the following differential system: 
\begin{align*}
\partial_{\tau} C_{1\tau} &= -\frac{1}{2}\Tr_{h_{\tau}}(\partial_{\tau} h_{\tau})C_{1\tau} + \frac{1}{2} \dd_\Sigma C_{2\tau} - \frac{e^{-\frac{4\phi_{\tau}}{n-1}}}{2}  C_{3\tau}\lrcorner_{h_{\tau}} \partial_{\tau}\psi_{\tau}  \\
\partial_{\tau} C_{2\tau} &= - 2 \delta^{h_{\tau}} C_{1\tau} - \Tr_{h_{\tau}}(\partial_{\tau} h_{\tau})C_{2\tau}  \\
\partial_{\tau} C_{3\tau} &= \partial_{\tau} h_{\tau}(C_{3\tau}^{\sharp_{h_{\tau}}}) - \left(\frac{1}{2}\Tr_{h_{\tau}}(\partial_{\tau} h_{\tau}) - \frac{4}{n-1}\partial_{\tau} \phi_{\tau} \right)C_{3\tau} 
\end{align*}

\noindent
Since, by assumption, we have:
\begin{equation*}
C_{1\tau}\vert_{\tau=0} = C_{1\tau}\vert_{\tau=0} = C_{1\tau}\vert_{\tau=0} = 0
\end{equation*}

\noindent
by Cauchy-Kovalevskaya it follows that $C_{1\tau}  = C_{1\tau}  = C_{1\tau}  = 0$ for every $\tau \in \cI$.
\end{proof}

\begin{remark}
As it happens with the Cauchy problem for Riemannian Einstein metrics \cite{Koiso1981}, analyticity of the initial data is a necessary condition for the previous theorem to hold and hence cannot be relaxed.  
\end{remark}

%%%%%%%%%%%%%%%%%%%%%%%%%%%%%%%%%%%%%%%%%%%%%%%%%%%%%%%%%%%%%%%%%%%%%%%%%%%%%%

\subsection{The Cauchy problem in three dimensions} 

%%%%%%%%%%%%%%%%%%%%%%%%%%%%%%%%%%%%%%%%%%%%%%%%%%%%%%%%%%%%%%%%%%%%%%%%%%%%%%

In the following, we consider initial data in two dimensions, intending to prove an existence result for the NS generalized Ricci soliton system in three dimensions. In two dimensions the constraint equations \eqref{eq:constraints1} and \eqref{eq:constraints2} with $\lambda = 0$ simplify to:
\begin{align*}
& s^{h} +  \vert \Theta \vert_{h}^2 -  \Tr_{h}( \Theta)^2  +   \rho^2 - \vert \dd \phi \vert^2_{h}  +  \frac{1}{2}  \vert \psi \vert^2_{h}   e^{-4\phi}   = 0 \\
&   \delta^{h}\Theta + \dd \Tr_{h} (\Theta) +  \rho \, \dd \phi  = 0  \, , \qquad  \delta^{h} \psi + 4 \psi(\dd \phi^{\sharp_{h}}) = 0 
\end{align*}

\noindent
The general solution to the equation $\delta^{h} \psi + 4 \psi(\dd \phi^{\sharp_{h}}) = 0$ is readily found to be:
\begin{equation*}
\psi = c e^{4\phi} \nu_h\, , \qquad c\in \mathbb{R}
\end{equation*}

\noindent
where $\nu_h\in \Omega^2(\Sigma)$ is the Riemannian volume form on $(\Sigma,h)$. Plugging this expression into the remaining equations above, the system reduces to:
\begin{align}
\label{eq:reducedconstraints3d}
s^{h} +  \vert \Theta \vert_{h}^2 -  \Tr_{h}( \Theta)^2  +   \rho^2 - \vert \dd \phi \vert^2_{h} +  \frac{c^2}{2}     e^{4\phi}   = 0 \, , \qquad \delta^{h}\Theta + \dd \Tr_{h} (\Theta) +  \rho \, \dd \phi  = 0  
\end{align}

\noindent
We shall use the following lemma.
\begin{lemma}[Lemma 3.4, \cite{BryanWentworth}]
\label{lemma:conformalsecondconstraint}
Let $(N,g)$ be a compact Riemannian manifold of any dimension and let $A$, $B$, and $w$ be smooth functions with $A$ and $B$ non-negative, $\int_M (A-B)\nu_g > 0$, and $\int_N w\,\nu_g > 0$. Then the equation
\begin{align*}
    \delta^g\dd u + Ae^u - Be^{-u} - w = 0
\end{align*}
has a unique $C^\infty(N)$ solution.
\end{lemma}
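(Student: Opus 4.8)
\emph{Overview.} Write $\Delta_g=\mathrm{div}_g\,\grad_g$, so that on functions $\delta^g\dd u=-\Delta_g u$ and $-\Delta_g$ is a nonnegative operator; the equation becomes $-\Delta_g u=F(\,\cdot\,,u)$ where
\[
F(x,u):=-A(x)\,e^{u}+B(x)\,e^{-u}+w(x).
\]
The structural features that drive the proof are that $\partial_u F=-Ae^{u}-Be^{-u}\le 0$, i.e.\ $F(x,\,\cdot\,)$ is monotone decreasing, and that the hypothesis $\int_N(A-B)\,\nu_g>0$ forces $A\not\equiv 0$, so that $\Omega:=\{A>0\}$ is a nonempty open subset of $N$. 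The plan is: (i) prove uniqueness by a difference argument exploiting the monotonicity of $F$; (ii) prove existence by the classical method of sub- and supersolutions, the real work being the construction of an ordered pair $u_-\le u_+$; (iii) upgrade the solution to $C^\infty$ by elliptic bootstrapping.

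\emph{Uniqueness.} If $u_1,u_2$ both solve the equation, subtract, pair with $u_1-u_2$ and integrate by parts to get $\int_N|\dd(u_1-u_2)|_g^2\,\nu_g=\int_N\big(F(x,u_1)-F(x,u_2)\big)(u_1-u_2)\,\nu_g\le 0$, hence $u_1-u_2\equiv c$ for a constant $c$. Substituting back yields $-Ae^{u_2}(e^{c}-1)+Be^{-u_2}(e^{-c}-1)\equiv 0$ pointwise; if $c\neq 0$ the two summands have the same sign everywhere, so both vanish identically, and in particular $Ae^{u_2}\equiv 0$, contradicting $A\not\equiv 0$. Therefore $c=0$ and $u_1=u_2$.

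\emph{Existence.} The engine is the classical sub/supersolution theorem: I will exhibit $u_\pm\in C^\infty(N)$ with $u_-\le u_+$, $-\Delta_g u_-\le F(x,u_-)$ and $-\Delta_g u_+\ge F(x,u_+)$; then, since $F(x,\,\cdot\,)$ is decreasing, the monotone iteration $u_0=u_+$, $-\Delta_g u_{k+1}+M u_{k+1}=F(x,u_k)+M u_k$ with $M$ chosen large enough that $s\mapsto F(x,s)+Ms$ is increasing on $[\min u_-,\max u_+]$ produces a decreasing sequence $u_+=u_0\ge u_1\ge\cdots\ge u_-$ converging to a solution. To build $u_+$: fix a ball $B_0$ with $\overline{B_0}\subset\Omega$ and $\varepsilon>0$ small enough that $\int_N(w-\varepsilon)\,\nu_g>0$; choose a smooth $\rho$ on $N$ equal to $w+\varepsilon$ outside $B_0$, from which a nonnegative bump supported in $B_0$ of total mass $\int_N(w+\varepsilon)\,\nu_g>0$ has been subtracted so that $\int_N\rho\,\nu_g=0$, solve $-\Delta_g\phi=\rho$, and set $u_+:=\phi+t$. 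For $t$ large one checks the two cases: on $N\setminus B_0$ one has $-\Delta_g u_+=w+\varepsilon\ge -Ae^{u_+}+Be^{-u_+}+w$ because $Ae^{u_+}\ge 0$ and $\sup_N Be^{-u_+}\le\|B\|_\infty e^{-t-\min\phi}<\varepsilon$; on $\overline{B_0}$, where $A\ge\delta:=\min_{\overline{B_0}}A>0$, the term $Ae^{u_+}=Ae^{\phi}e^{t}\to\infty$ uniformly and dominates the (bounded) remaining terms. The subsolution $u_-:=\psi-s$ is constructed symmetrically from a smooth $\sigma$ equal to $w-\varepsilon$ outside $B_0$ minus a nonnegative bump of mass $\int_N(w-\varepsilon)\,\nu_g>0$ inside $B_0$, with $-\Delta_g\psi=\sigma$; for $s$ large, $Ae^{u_-}=Ae^{\psi}e^{-s}\to 0$ uniformly and $Be^{-u_-}\ge 0$, which makes $-\Delta_g u_-\le F(x,u_-)$ hold throughout. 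Finally $u_+-u_-=\phi-\psi+t+s\ge 0$ once $s,t$ are large, so the pair is ordered.

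\emph{Regularity, and the main obstacle.} A solution produced this way lies in $L^\infty\cap H^1$; then $F(\,\cdot\,,u)\in L^\infty$ gives $u\in W^{2,p}$ for all $p$, hence $u\in C^{1,\alpha}$, and since $A,B,w\in C^\infty$ and $F$ is smooth in its arguments, $F(\,\cdot\,,u)\in C^{k,\alpha}$ whenever $u\in C^{k,\alpha}$, so Schauder bootstrapping gives $u\in C^\infty(N)$. The only delicate step is the construction of a genuine supersolution: a constant $u_+\equiv C$ fails exactly on $\{A=0,\ w>0\}$, and no globally superharmonic barrier can work since $\int_N w\,\nu_g>0$ rules out $-\Delta_g u_+\ge w$ holding on all of $N$. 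The fix is to dump the ``excess'' of the source term into the nonempty open set $\Omega=\{A>0\}$, where the convex term $Ae^{u_+}$ swallows an arbitrarily large right-hand side, while keeping $-\Delta_g u_+\ge w+\varepsilon$ on the complement; this is precisely where the hypotheses $A\not\equiv 0$ (from $\int_N(A-B)\,\nu_g>0$) and $\int_N w\,\nu_g>0$ enter, and the mirror construction of $u_-$ uses them again. (In the two-dimensional case relevant to this paper one may alternatively minimise the strictly convex functional $J(u)=\tfrac12\int_N|\dd u|_g^2\,\nu_g+\int_N Ae^{u}\,\nu_g+\int_N Be^{-u}\,\nu_g-\int_N wu\,\nu_g$ over $H^1(N)$, coercivity following from the Moser--Trudinger inequality together with Jensen's inequality for the probability measure $A\,\nu_g\big/\!\int_N A\,\nu_g$; the sub/supersolution route has the advantage of working verbatim in any dimension.)
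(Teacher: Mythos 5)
Your proof is correct. A preliminary remark: the paper does not prove this lemma at all --- it is imported with a citation from Bryan--Wentworth --- so there is no in-paper argument to compare against; what you have written is a self-contained proof in the classical Kazdan--Warner style (uniqueness from the monotonicity of $u\mapsto -Ae^{u}+Be^{-u}$ together with $A\not\equiv 0$; existence by monotone iteration between an ordered sub/supersolution pair; smoothness by elliptic bootstrap), which is also the standard route for results of this type, including the cited source. The one genuinely non-routine step --- producing a supersolution although $\int_N w\,\nu_g>0$ forbids $-\Delta_g u_+\ge w$ from holding globally --- is handled correctly: you solve $-\Delta_g\phi=\rho$ with $\rho$ of mean zero equal to $w+\varepsilon$ off a ball $B_0\subset\{A>0\}$, and for large $t$ the convex term $Ae^{\phi+t}$ absorbs the concentrated deficit on $\overline{B_0}$, while the mirror construction gives the subsolution and the two become ordered after translation; the uniqueness argument (constant difference, then pointwise sign analysis forcing $A\equiv 0$ unless the constant vanishes) is also sound.

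Two cosmetic points. First, of the hypothesis $\int_N(A-B)\,\nu_g>0$ you only ever use the consequence $A\not\equiv 0$ (in the supersolution and in uniqueness), whereas $\int_N w\,\nu_g>0$ is what makes the subsolution possible; your closing sentence attributes both hypotheses to both constructions, which is slightly imprecise but harmless, since you are in any case proving the statement under the stated (stronger) assumptions. Second, the parenthetical two-dimensional variational alternative is not justified (coercivity of the functional via Moser--Trudinger would need an actual argument), but it is explicitly an aside and carries no weight in the proof.
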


\noindent 
We now prove the existence of solutions to the constraint equations \eqref{eq:constraints1} and \eqref{eq:constraints2} on every compact Riemann surface.
\begin{thm}
\label{thm:wellposednessconstraints3d}
Let $\Sigma$ be a compact and oriented two-dimensional manifold. For every conformal class $[h]$ of Riemannian metrics on $\Sigma$ there exists a solution $(h,\Theta,b,\Psi,\phi,\rho)$ to the constraint equations \eqref{eq:constraints1} and \eqref{eq:constraints2} on $\Sigma$ with $h\in [h]$.  
\end{thm}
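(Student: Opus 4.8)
The plan is to fix a representative metric $h_0 \in [h]$, normalize by Gauss--Bonnet, and solve the two constraint equations in \eqref{eq:reducedconstraints3d} by a conformal ansatz together with a judicious choice of $\Theta$, $\rho$ and $\phi$. The cleanest route is to exploit the enormous freedom in the data: we are free to choose the constant $c$, the symmetric two-tensor $\Theta$, the functions $\phi, \rho$, and the conformal factor, subject only to the two scalar/vector equations. First I would look for a solution with $\Theta$ \emph{pure trace}, say $\Theta = \mu\, h$ for a function $\mu$ on $\Sigma$; then in dimension two $|\Theta|_h^2 = 2\mu^2$ and $\Tr_h(\Theta)^2 = 4\mu^2$, so $|\Theta|_h^2 - \Tr_h(\Theta)^2 = -2\mu^2$, and the momentum constraint $\delta^h \Theta + \dd\Tr_h\Theta + \rho\,\dd\phi = 0$ becomes $-\dd\mu + 2\,\dd\mu + \rho\,\dd\phi = \dd\mu + \rho\,\dd\phi = 0$. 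A convenient way to kill this is to take $\mu$ constant and $\rho \equiv 0$; then the momentum constraint holds identically, and the Hamiltonian constraint reduces to the single scalar equation
\begin{equation}
\label{eq:finalscalarconstraint}
s^h - 2\mu^2 - |\dd\phi|_h^2 + \tfrac{c^2}{2}\, e^{4\phi} = 0 .
\end{equation}

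Next I would make this tractable by a further simplification: take $\phi$ constant as well, so that $|\dd\phi|_h^2 = 0$ and \eqref{eq:finalscalarconstraint} becomes $s^h = 2\mu^2 - \tfrac{c^2}{2} e^{4\phi}$, i.e.\ a metric of \emph{constant scalar curvature} in the class $[h]$. By the uniformization theorem (the two-dimensional Yamabe problem), every conformal class on a compact oriented surface contains a metric $h \in [h]$ of constant curvature $s^h = k$, where the sign of $k$ is that of the Euler characteristic $\chi(\Sigma)$. It then remains only to realize the prescribed constant $k$ as $2\mu^2 - \tfrac{c^2}{2} e^{4\phi}$ for suitable real constants $\mu$, $c$ and $\phi$: if $k \le 0$ choose $\mu = 0$ and pick $c, \phi$ with $\tfrac{c^2}{2} e^{4\phi} = -k$; if $k > 0$ choose $c = 0$ (so $\psi = 0$) and $\mu$ with $2\mu^2 = k$. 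In every case we then set $\Theta = \mu\, h$, $\rho = 0$, $\phi = \text{const}$, $\psi = c\, e^{4\phi}\nu_h$, and let $b$ be any curving on $(\cP, A, Y)$ (the constraint equations place no condition on $b$ beyond the fact that $H_b$ enters the two-dimensional reduced system only through terms that vanish here, since $|H_b|_h^2$ multiplies nothing after we set the appropriate data — indeed in two dimensions $H_b \in \Omega^3(\Sigma)$ is automatically zero). This yields a solution $(h, \Theta, b, \psi, \phi, \rho)$ of \eqref{eq:constraints1}--\eqref{eq:constraints2} with $h \in [h]$, as required.

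If instead one wants a solution with genuinely varying dilaton — which is closer to the spirit of the NS gradient soliton and matches the use of Lemma~\ref{lemma:conformalsecondconstraint} flagged in the excerpt — the alternative is to keep $\Theta = \mu h$ with $\mu$ constant and $\rho \equiv 0$, write $h = e^{2u} h_0$ for the sought conformal factor, and use the standard transformation law $s^{h} = e^{-2u}(s^{h_0} - 2\,\delta^{h_0}\dd u)$ together with $|\dd\phi|_h^2 = e^{-2u}|\dd\phi|_{h_0}^2$. Substituting into \eqref{eq:finalscalarconstraint} and multiplying by $e^{2u}$ turns it into
\begin{equation*}
2\,\delta^{h_0}\dd u + \tfrac{c^2}{2}\, e^{4\phi + 2u} - \big(s^{h_0} - |\dd\phi|_{h_0}^2 - 2\mu^2 e^{2u}\big) = 0 ,
\end{equation*}
and with $\phi$ prescribed (e.g.\ any fixed smooth function) and $\mu = 0$ this is exactly an equation of the form $\delta^{h_0}\dd u + A e^{u'} - w = 0$ after a linear change $u' = \alpha u$; one then invokes Lemma~\ref{lemma:conformalsecondconstraint} with $B \equiv 0$, checking the sign conditions $A \ge 0$ (clear, $A = \tfrac{c^2}{4}e^{4\phi} \ge 0$) and $\int_\Sigma w\,\nu_{h_0} > 0$, which by Gauss--Bonnet reads $\int_\Sigma (s^{h_0} - |\dd\phi|_{h_0}^2)\,\nu_{h_0} = 4\pi\chi(\Sigma) - \|\dd\phi\|^2_{L^2}$, and this can be made positive by choosing $\phi$ with small enough gradient when $\chi(\Sigma) > 0$, or by a sign-adjusted variant of the ansatz otherwise.

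The only real obstacle is the sign bookkeeping in the last paragraph: Lemma~\ref{lemma:conformalsecondconstraint} requires the average of the "source" $w$ to be strictly positive, and after Gauss--Bonnet this ties the admissible $(\Sigma, \phi, \mu, c)$ to the Euler characteristic; handling the three cases $\chi(\Sigma) > 0$, $\chi(\Sigma) = 0$, $\chi(\Sigma) < 0$ uniformly — possibly by allowing the $B e^{-u}$ term in the lemma (i.e.\ a nonzero trace-free part of $\Theta$, or a nonzero $\rho$) to absorb the wrong-sign contributions — is where the argument needs care. The elementary constant-dilaton route sketched second-to-last paragraph sidesteps this entirely by reducing to uniformization, and I would present that as the main proof, remarking that Lemma~\ref{lemma:conformalsecondconstraint} gives the more flexible variable-dilaton solutions.
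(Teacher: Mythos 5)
Your main (constant-data) argument is correct, but it proves the theorem by a genuinely different and more elementary route than the paper. The paper also reduces to \eqref{eq:reducedconstraints3d} and also ends up with an effectively pure-trace $\Theta$ (it must set $\Theta_o=0$ so that the coefficient $B$ in Lemma \ref{lemma:conformalsecondconstraint} is non-negative), but it keeps the dilaton as an essentially arbitrary prescribed function: writing $\Theta = f\,h$, $\rho=-F(\phi)$, the momentum constraint forces $f=(\int F)(\phi)+k$, and the Hamiltonian constraint becomes a Kazdan--Warner-type equation for the conformal factor of $h$ relative to the constant-curvature representative $h_\mu\in[h]$, solved via Lemma \ref{lemma:conformalsecondconstraint} under sign conditions on $F,c,k,\phi$ (the sphere case needing a non-constant $\phi$ with $\int(\vert\dd\phi\vert^2-1)\nu_{h_1}\geq 0$). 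You instead take $\phi$, $\mu$ constant, $\rho=0$, $\Theta=\mu h$, $\psi=c\,e^{4\phi}\nu_h$, so the momentum and $\psi$-constraints hold identically and the Hamiltonian constraint collapses to $s^h=2\mu^2-\tfrac{c^2}{2}e^{4\phi}$, which is exactly the existence of a constant-curvature metric in $[h]$ (uniformization) plus an elementary matching of constants; this suffices for the bare existence statement, uses no elliptic input beyond uniformization, and treats the $\chi(\Sigma)>0$ case more cleanly than the paper does. What you lose is the flexibility of the paper's construction, which produces solutions with an essentially arbitrary non-constant dilaton profile and free data $(F,c,k)$ --- closer to the spirit of generating genuinely non-trivial solitons --- whereas your main route only yields homogeneous initial data. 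Your second sketch is essentially the paper's method; the sign and convention bookkeeping you flag there (and the sign of the Laplacian term in your conformal transformation formula should be checked against the convention $s^{e^{2u}h_0}=e^{-2u}(s^{h_0}+2\,\delta^{h_0}\dd u)$ with $\delta^{h_0}\dd$ the non-negative Laplacian) is precisely what the paper handles through the hypotheses of Lemma \ref{lemma:conformalsecondconstraint}, but since you do not rely on that route, those loose ends are harmless.
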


\begin{proof}
It is enough to solve equations \eqref{eq:reducedconstraints3d}. We consider first the second equation in \eqref{eq:reducedconstraints3d}. To solve it, we set:
\begin{equation*}
\Theta = f\, h + \Theta_o \, , \qquad \rho = - F(\phi) 
\end{equation*}

\noindent
for a function $f\in C^{\infty}(\Sigma)$, a traceless and divergence-free two-tensor $\Theta_o \in \Gamma(T^{\ast}\Sigma\odot T^{\ast}\Sigma)$ and a real smooth function $F\colon \mathbb{R}\to \mathbb{R}$. Substituting in the second equation of \eqref{eq:reducedconstraints3d}, we obtain:
\begin{equation*}
\dd f - \dd (\int F)(\phi) = 0
\end{equation*}

\noindent
Hence, $f = (\int F)(\phi) + k$ for a real constant $k\in\mathbb{R}$. Substituting this into the first equation in \eqref{eq:reducedconstraints3d}, we obtain:
\begin{equation*}
s^h + \vert \Theta_o \vert^2_h - 2 (k + (\int F)(\phi))^2  +   F(\phi) ^2 - \vert \dd \phi \vert^2_{h} +  \frac{c^2}{2}     e^{4\phi}   = 0
\end{equation*}

\noindent
Given a conformal class of metrics $[h]$ denote by $h_{\mu}\in [h]$ the unique representative with constant scalar curvature $\mu \in \{ - 1 , 0 , 1 \}$. Any other representative in $[h]$ is of the form $h = e^u h_{\mu}$ for a unique smooth function $u\in C^{\infty}(\Sigma)$. Plugging $h = e^u h_{\mu}$ into the previous equation, the first equation in \eqref{eq:reducedconstraints3d} becomes equivalent to:
\begin{equation*}
 \delta^{h_{\mu}}\dd u +   (F(\phi)^2  +  \frac{c^2}{2}     e^{4\phi} - (k + (\int F)(\phi))^2) e^u   + \vert \Theta_o \vert_{h_{\mu}}^2 e^{-u}   - \vert \dd \phi \vert^2_{h_{\mu}} + s^{h_{\mu}}  = 0     
\end{equation*}

\noindent
Comparing with the statement of Lemma \ref{lemma:conformalsecondconstraint}, we have:
\begin{equation*}
A = F(\phi)^2  +  \frac{c^2}{2} e^{4\phi} - (k + (\int F)(\phi))^2 \, , \qquad B= - \vert \Theta_o \vert_{h_{\mu}}^2\, , \qquad w = \vert \dd \phi \vert^2_{h} - s^{h_{\mu}}
\end{equation*}

\noindent
Assume momentarily that $\mu = 0 , -1$, that is, assume that $(\Sigma,h_{\mu})$ is either flat or hyperbolic, and choose $F$, $c$ and $k$ such that:
\begin{equation*}
F(\phi)^2  +  \frac{c^2}{2}     e^{4\phi} \geq (k + (\int F)(\phi))^2\, , \qquad \Theta_o = 0
\end{equation*}

\noindent
Then by Lemma \ref{lemma:conformalsecondconstraint} there exists a unique representative $h\in [h_{\mu}]$ solving the first equation in \eqref{eq:reducedconstraints3d}, and therefore we obtain a solution to the constraint equations \eqref{eq:constraints1} and \eqref{eq:constraints2} on $\Sigma$ with $h\in [h]$ in the given conformal class.  If $\mu = 1$, then appropriately choosing $\phi$ we can guarantee that:
\begin{equation*}
\int (\vert \dd \phi \vert^2_{h} - 1) \nu_{h_1} \ge 0
\end{equation*}

\noindent
and hence we similarly obtain a solution on the sphere.
\end{proof}
 
\noindent
As a consequence of Theorem \ref{thm:wellposednessconstraints3d}, we obtain the following corollary on the topology of NS gradient generalized Ricci solitons in three dimensions. 
\begin{cor}
Let $(\Sigma,h_{\mu})$ be a punctured Riemann surface. Then, there exists an NS gradient generalized Ricci soliton $(g,b,\phi)$ on a bundle gerbe over a three-manifold $M$, and an embedding $\iota \colon \Sigma \hookrightarrow M$ such that $\iota^{\ast}g \in [h_{\mu}]$. In particular, there exist infinitely many different topological types of three-dimensional NS gradient generalized Ricci solitons.
\end{cor}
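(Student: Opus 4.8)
The plan is to stitch together Theorem~\ref{thm:wellposednessconstraints3d} and Theorem~\ref{thm:CylindricalWellPosedness}; the only genuine work is to arrange \emph{analytic} initial data and to pass between the punctured surface and its compactification. Write $\Sigma=\hat\Sigma\setminus\{p_1,\dots,p_N\}$, where $\hat\Sigma$ is the compact Riemann surface whose complex structure restricts to that of $\Sigma$ and $N\geq 1$. The complex structure of $\hat\Sigma$ determines a conformal class $[\hat h]$ of Riemannian metrics on $\hat\Sigma$ restricting on $\Sigma$ to $[h_\mu]$, and I fix as representative the constant Gaussian curvature metric $\hat h$ furnished by uniformization, which is real-analytic. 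Since $H^3(\hat\Sigma,\mathbb{Z})=0$, I take $(\cP,Y)$ to be the trivial bundle gerbe on $\hat\Sigma$ with its trivial, hence analytic, connective structure $A$; curvings are then ordinary $2$-forms on $\hat\Sigma$, and in dimension two $H_b=0$ identically (a $3$-form on a surface vanishes), so the curving is inert in the constraints.

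First I would produce an analytic solution of the NS ($\lambda=0$) constraint equations \eqref{eq:constraints1}--\eqref{eq:constraints2} on $(\hat\Sigma,[\hat h])$ by running the proof of Theorem~\ref{thm:wellposednessconstraints3d} with analytic choices: a real-analytic dilaton $\phi$ (non-constant if $\mu=0$), a real-analytic $F$, constants $c,k$ with $\Theta_o=0$ and $F(\phi)^2+\tfrac{c^2}{2}e^{4\phi}\geq(k+(\int F)(\phi))^2$, and $\hat h_\mu=\hat h$; one may even take $F\equiv 0$, $k=0$, $c\neq 0$. Then the data $A=F(\phi)^2+\tfrac{c^2}{2}e^{4\phi}-(k+(\int F)(\phi))^2\geq 0$, $B=0$, $w=|\dd\phi|^2_{\hat h}-s^{\hat h}$ entering Lemma~\ref{lemma:conformalsecondconstraint} are real-analytic, so the conformal factor $u$ solving $\delta^{\hat h}\dd u+Ae^{u}-w=0$ is real-analytic by the classical theorem that solutions of elliptic equations with real-analytic coefficients are real-analytic; hence $h=e^{u}\hat h$, $\Theta=((\int F)(\phi)+k)\,h$, $\rho=-F(\phi)$, $\psi=ce^{4\phi}\nu_h$ and $b=0$ are all real-analytic and solve \eqref{eq:constraints1}--\eqref{eq:constraints2} on $\hat\Sigma$ with $h\in[\hat h]$.

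Next I would feed this analytic datum into Theorem~\ref{thm:CylindricalWellPosedness}, applied to $\hat\Sigma$ with the analytic trivial bundle gerbe: this yields a $3$-manifold $M$, a bundle gerbe with connective structure on $M$, an embedding $\hat\Sigma\subset M$ with second fundamental form $\Theta$, and a gradient generalized Ricci soliton $(g,\bar b,\bar\phi)$ around $\hat\Sigma$ with $g|_{\hat\Sigma}=h$, $\bar b|_Y=b$, $\bar\phi|_{\hat\Sigma}=\phi$. Since the development of Proposition~\ref{prop:CylindricalGeneralizedSolitonSystem} is carried out with $\lambda=0$, the dilaton equation in \eqref{eq:evolutionequations} holds with $\lambda=0$, so $\lambda(g,\bar b,\bar\phi)=0$ by Lemma~\ref{lemma:gradientcondition} and $(g,\bar b,\bar\phi)$ is NS. Composing with the open inclusion $\Sigma\hookrightarrow\hat\Sigma$ gives an embedding $\iota\colon\Sigma\hookrightarrow M$ with $\iota^\ast g=h|_\Sigma\in[h_\mu]$, which is the first assertion. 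For the last assertion I would run this construction with $\hat\Sigma=\hat\Sigma_\gamma$ of genus $\gamma$ (and one puncture), so that by the remark following Theorem~\ref{thm:CylindricalWellPosedness} one may take $M=M_\gamma$ a tubular neighborhood diffeomorphic to $\cI\times\hat\Sigma_\gamma$; the $M_\gamma$ are pairwise non-homeomorphic, e.g.\ because $\pi_1(M_\gamma)\cong\pi_1(\hat\Sigma_\gamma)$, so there are infinitely many topological types of three-dimensional NS gradient generalized Ricci solitons.

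The main obstacle is the analyticity bookkeeping: Theorem~\ref{thm:CylindricalWellPosedness} relies on the Cauchy--Kovalevskaya theorem and therefore genuinely needs \emph{analytic} initial data, whereas Theorem~\ref{thm:wellposednessconstraints3d} only produces a smooth solution. One must check that every free choice in the proof of the latter can be made real-analytic and that the single PDE actually solved there, the semilinear elliptic equation of Lemma~\ref{lemma:conformalsecondconstraint}, admits an analytic solution when its coefficients are analytic; this is true, but it is the point that must be argued rather than quoted. The remaining ingredients --- the passage to $\hat\Sigma$, the vanishing of $H_b$ in dimension two, the triviality of a gerbe over a surface, and the topological distinctness of the $M_\gamma$ --- are routine.
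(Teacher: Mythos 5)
Your proposal is correct and follows essentially the route the paper intends: the corollary is presented as an immediate consequence of Theorem \ref{thm:wellposednessconstraints3d} combined with Theorem \ref{thm:CylindricalWellPosedness}, obtained exactly as you do by passing to the compactification $\hat\Sigma$, solving the $\lambda=0$ constraints there in the extended conformal class, evolving by Cauchy--Kovalevskaya, and restricting the embedding to the punctured surface, with your analyticity bookkeeping (analytic choices of $\hat h$, $\phi$, $F$, the trivial gerbe, and analytic elliptic regularity for the conformal factor $u$) supplying a step the paper leaves implicit. The only minor point to add is that in the genus-zero case the analytic $\phi$ must also be chosen so that $\int_{\hat\Sigma}(\vert\dd\phi\vert^2_{h_1}-1)\,\nu_{h_1}>0$, as in the paper's proof of Theorem \ref{thm:wellposednessconstraints3d}.
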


\noindent
A much more difficult problem would be to prove the previous corollary for \emph{complete} NS generalized Ricci solitons, as studied in \cite{PodestaRaffero} for a certain class of three-dimensional generalized Ricci solitons.

%%%%%%%%%%%%%%%%%%%%%%%%%%%%%%%%%%%%%%%%%%%%%%%%%%%%%%%%%%%%%%%%%%%%%%%%%%%%%%
%%%%%%%%%%%%%%%%%%%%%%%%%%%%%%%%%%%%%%%%%%%%%%%%%%%%%%%%%%%%%%%%%%%%%%%%%%%%%%
%%%%%%%%%%%%%%%%%%%%%%%%%%%%%%%%%%%%%%%%%%%%%%%%%%%%%%%%%%%%%%%%%%%%%%%%%%%%%%
%%%%%%%%%%%%%%%%%%%%%%%%%%%%%%%%%%%%%%%%%%%%%%%%%%%%%%%%%%%%%%%%%%%%%%%%%%%%%%

\appendix

%%%%%%%%%%%%%%%%%%%%%%%%%%%%%%%%%%%%%%%%%%%%%%%%%%%%%%%%%%%%%%%%%%%%%%%%%%%%%%
%%%%%%%%%%%%%%%%%%%%%%%%%%%%%%%%%%%%%%%%%%%%%%%%%%%%%%%%%%%%%%%%%%%%%%%%%%%%%%
%%%%%%%%%%%%%%%%%%%%%%%%%%%%%%%%%%%%%%%%%%%%%%%%%%%%%%%%%%%%%%%%%%%%%%%%%%%%%%
%%%%%%%%%%%%%%%%%%%%%%%%%%%%%%%%%%%%%%%%%%%%%%%%%%%%%%%%%%%%%%%%%%%%%%%%%%%%%%

\section{Differential relations satisfied by the constraints}
\label{app:calculos}

%%%%%%%%%%%%%%%%%%%%%%%%%%%%%%%%%%%%%%%%%%%%%%%%%%%%%%%%%%%%%%%%%%%%%%%%%%%%%%
%%%%%%%%%%%%%%%%%%%%%%%%%%%%%%%%%%%%%%%%%%%%%%%%%%%%%%%%%%%%%%%%%%%%%%%%%%%%%%
%%%%%%%%%%%%%%%%%%%%%%%%%%%%%%%%%%%%%%%%%%%%%%%%%%%%%%%%%%%%%%%%%%%%%%%%%%%%%%
%%%%%%%%%%%%%%%%%%%%%%%%%%%%%%%%%%%%%%%%%%%%%%%%%%%%%%%%%%%%%%%%%%%%%%%%%%%%%%
 
\noindent
Recall the following definitions in terms of the constraint equations \eqref{eq:constraints}:
\begin{align*}
C_{1\tau} &:= \delta^{h_{\tau}}\partial_{\tau} h_{\tau} + \dd_\Sigma \Tr_{h_{\tau}}(\partial_{\tau} h_{\tau}) + \frac{2}{n-1}\, \partial_{\tau} \phi_{\tau}\, \dd_{\Sigma} \phi_{\tau} + e^{-\tfrac{4\phi_{\tau}}{n-1}}\,\psi_{\tau} \lrcorner_{h_{\tau}} H_{b_{\tau}} \\
%%%%
C_{2\tau} &:= s^{h_{\tau}} +  \frac{1}{4} \vert \partial_{\tau} h_{\tau} \vert_{h_{\tau}}^2 - \frac{1}{4} \Tr_{h_{\tau}}( \partial_{\tau} h_{\tau})^2  + \lambda    e^{ \frac{2\phi_{\tau}}{n-1}} + \frac{1}{n-1} ( (\partial_{\tau} \phi_{\tau})^2 - \vert \dd_{\Sigma}\phi_{\tau} \vert^2_{h_{\tau}})  +  \frac{1}{2} ( \vert \psi_{\tau} \vert^2_{h_{\tau}}   -  \vert H_{b_{\tau}} \vert^2_{h_{\tau}} ) e^{-\tfrac{4\phi_{\tau}}{n-1}}\\
%%%%
C_{3\tau} &:= \delta^{h_{\tau}} \psi_{\tau} + \frac{4}{n-1} \psi_{\tau}(\dd_{\Sigma} \phi^{\sharp_{h_{\tau}}})
\end{align*}

\noindent
We will show that the $\tau$-derivative of $C_{1\tau}$, $C_{2\tau}$ and $C_{3\tau}$ ($i \in \{1,2,3\}$) are specific functions of $C_{1\tau}$, $C_{2\tau}$ and $C_{3\tau}$. This is the key result used in Theorem in \ref{thm:CylindricalWellPosedness} to prove that if all $C_{i\tau}$ are zero at $\tau = 0$, then they are all zero for every $\tau \in \cI$, and thus are \emph{preserved} by the evolution equations \eqref{eq:evolutionequations}.
\begin{prop}
\label{prop:firstconstraint}
Let $(g , \bar{b}, \bar{\phi}) \in \Conf(\bar{\cP} , \bar{A}, \bar{Y})$ be a gradient Ricci soliton with reduction $( h_{\tau} , a_{\tau} , b_{\tau} , \phi_{\tau})$ on $(\cP, Y , A_{\tau}  , \Psi_{\tau})$. Then, the following equation holds:
\begin{equation*}
\ddtau C_{1\tau} = - \tfrac{1}{2}\Tr_h(\partial_{\tau}h_{\tau})C_{1\tau} + \dd_\Sigma C_{2\tau} - e^{-\tfrac{4\phi_{\tau}}{n-1}} C_{3\tau} \lrcorner_{h_{\tau}} \psi_{\tau} \,.
\end{equation*}
\end{prop}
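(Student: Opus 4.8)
The plan is to differentiate the quantity $C_{1\tau}$ term by term in $\tau$, using the evolution equations \eqref{eq:evolutionequations} to eliminate all second $\tau$-derivatives of $h_\tau$ and $\phi_\tau$ and the equation $\partial_\tau\psi_\tau = \cdots$ to eliminate $\partial_\tau\psi_\tau$, and then to recognize the resulting expression as the claimed combination of $C_{1\tau}$, $C_{2\tau}$ and $C_{3\tau}$. Concretely, $C_{1\tau}$ is a $\tau$-dependent one-form on $\Sigma$ built from the family of metrics $h_\tau$, so I must account both for the explicit $\tau$-dependence of each term and for the fact that the operators $\delta^{h_\tau}$, $\Tr_{h_\tau}$, $\lrcorner_{h_\tau}$, the volume form, and the musical isomorphisms themselves depend on $\tau$ through $h_\tau$. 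I would first record the variational formulas for these operators under the variation $\partial_\tau h_\tau$ (the first variation of $\delta^{h_\tau}$ acting on a symmetric $2$-tensor, of $\Tr_{h_\tau}$, of $\vert\cdot\vert^2_{h_\tau}$, of $\lrcorner_{h_\tau}$), together with the contracted second Bianchi identity relating $\delta^{h_\tau}\Ric^{h_\tau}$ and $\dd_\Sigma s^{h_\tau}$, since the term $\delta^{h_\tau}\partial_\tau h_\tau$ will produce $\delta^{h_\tau}\partial_\tau^2 h_\tau = \delta^{h_\tau}(2\Ric^{h_\tau} + \cdots)$ after using the evolution equation.

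The key steps, in order, are: (1) write $\partial_\tau C_{1\tau}$ as a sum over the four terms of $C_{1\tau}$, each contributing an "operator-variation" piece plus a "differentiate-the-argument" piece; (2) in the term coming from $\delta^{h_\tau}\partial_\tau h_\tau$, substitute the $h$-evolution equation for $\partial_\tau^2 h_\tau$ and apply the Bianchi identity $\delta^{h_\tau}\Ric^{h_\tau} = -\tfrac12 \dd_\Sigma s^{h_\tau}$ to produce the $\dd_\Sigma s^{h_\tau}$ contribution toward $\dd_\Sigma C_{2\tau}$; (3) in the term $\dd_\Sigma\Tr_{h_\tau}(\partial_\tau h_\tau)$, differentiate under $\dd_\Sigma$ using $\partial_\tau\Tr_{h_\tau}(\partial_\tau h_\tau) = \Tr_{h_\tau}(\partial_\tau^2 h_\tau) - \vert\partial_\tau h_\tau\vert^2_{h_\tau}$ and again substitute the $h$-evolution equation, extracting the remaining pieces of $\dd_\Sigma C_{2\tau}$ such as the $\dd_\Sigma\bigl(\tfrac14\vert\partial_\tau h_\tau\vert^2 - \tfrac14\Tr(\partial_\tau h_\tau)^2\bigr)$, $\dd_\Sigma\bigl(\tfrac{1}{n-1}(\partial_\tau\phi_\tau)^2\bigr)$, and $\lambda$-terms; (4) in the dilaton term $\tfrac{2}{n-1}\partial_\tau\phi_\tau\,\dd_\Sigma\phi_\tau$, substitute the $\phi$-evolution equation for $\partial_\tau^2\phi_\tau$; (5) in the flux term $e^{-4\phi_\tau/(n-1)}\psi_\tau\lrcorner_{h_\tau}H_{b_\tau}$, differentiate the exponential, the operator $\lrcorner_{h_\tau}$, $\psi_\tau$ (using $\partial_\tau\psi_\tau$ from its evolution equation, and the identity $\psi_\tau = \partial_\tau b_\tau - \dd_\Sigma a_\tau$ so that $\partial_\tau H_{b_\tau} = \dd_\Sigma\psi_\tau$), and $H_{b_\tau}$; and finally (6) collect all terms and reorganize them into the combination $-\tfrac12\Tr_{h_\tau}(\partial_\tau h_\tau)C_{1\tau} + \dd_\Sigma C_{2\tau} - e^{-4\phi_\tau/(n-1)}C_{3\tau}\lrcorner_{h_\tau}\psi_\tau$, checking that all leftover terms cancel.

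I expect the main obstacle to be step (6): the bookkeeping required to see that the large number of residual algebraic terms — products of $\partial_\tau h_\tau$ with $\Ric^{h_\tau}$, with $\psi_\tau\circ\psi_\tau$ and $H_{b_\tau}\circ H_{b_\tau}$, with $\dd\phi\otimes\dd\phi$, and the various exponential prefactors — assemble exactly into $-\tfrac12\Tr_{h_\tau}(\partial_\tau h_\tau)$ times $C_{1\tau}$ plus $\dd_\Sigma C_{2\tau}$ plus the $C_{3\tau}$ flux term, with nothing left over. In particular, matching the flux contributions is delicate: one must use both the evolution equation for $\psi_\tau$ and the relation $\partial_\tau H_{b_\tau} = \dd_\Sigma\psi_\tau$, and one must carefully track how $\delta^{h_\tau}(\psi_\tau\lrcorner_{h_\tau}H_{b_\tau})$ interacts with $C_{3\tau} = \delta^{h_\tau}\psi_\tau + \tfrac{4}{n-1}\psi_\tau(\dd_\Sigma\phi^{\sharp_{h_\tau}})$, using a Leibniz-type identity for the codifferential of an interior product. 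A secondary technical point is to get all the dimension-dependent factors $\tfrac{1}{n-1}$, $\tfrac{2}{n-1}$, $\tfrac{4}{n-1}$ to conspire correctly, which is exactly where a computational sign or factor error would hide; for this reason the detailed verification is deferred to the appendix, and here I would only indicate the structure of the argument.
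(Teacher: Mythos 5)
Your plan is correct and follows essentially the same route as the paper: differentiate $C_{1\tau}$ term by term, substitute the evolution equations for $\partial_\tau^2 h_\tau$, $\partial_\tau^2\phi_\tau$ and $\partial_\tau\psi_\tau$ (together with $\partial_\tau H_{b_\tau}=\dd_\Sigma\psi_\tau$), and reassemble the result into $-\tfrac12\Tr_{h_\tau}(\partial_\tau h_\tau)C_{1\tau}+\dd_\Sigma C_{2\tau}-e^{-4\phi_\tau/(n-1)}C_{3\tau}\lrcorner_{h_\tau}\psi_\tau$ via Leibniz-type identities for $\delta^{h_\tau}$ of contractions. The only difference is that the paper imports the metric-sector identity (your steps (1)--(3), i.e.\ the first variations of $\delta^{h_\tau}$ and $\Tr_{h_\tau}$ combined with the contracted Bianchi identity) wholesale from Koiso's Proposition 6, rather than re-deriving it as you propose.
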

\begin{proof}
The result is obtained through a long but direct computation. First, use the following identity, which is proven in \cite[Proposition 6]{Koiso1981}: 
\begin{equation}
\begin{gathered}
\ddtau \left( \nabla^{h_{\tau}*}(\partial_{\tau}h_{\tau}) + \dd_\Sigma\Tr_{h_{\tau}}(\partial_{\tau}h_{\tau}) \right) = - \tfrac{1}{2} \Tr_{h_{\tau}}(\partial_{\tau}h_{\tau})\left( \nabla^{h_{\tau} \ast}(\partial_{\tau}h_{\tau}) + \dd_\Sigma\Tr_{h_{\tau}}(\partial_{\tau}h_{\tau}) \right) \\
+\, \dd_\Sigma\left(s^{h_{\tau}} + \tfrac{1}{4}\Tr_{h_{\tau}}(\partial_{\tau}h_{\tau})_{h_{\tau}}^2 - \tfrac{1}{4}\Tr_{h_{\tau}}^2(\partial_{\tau}h_{\tau}) \right) - 2 \dd_\Sigma\Tr_{h_{\tau}} \big(\Ric^g\vert _\Sigma\big) - 2 \nabla^{h_{\tau} \ast} (\Ric^g\vert _\Sigma)  
\end{gathered}
\end{equation}

\noindent
Substituting the expression of $\Ric^g\vert_\Sigma$ given in \eqref{eq:NSNSEinsteinframeII}, one obtains:
\begin{equation}
\label{eq:gausscodazziconstraintNSNS1}
\begin{gathered}
\ddtau \left( \nabla^{h_{\tau} \ast}(\partial_{\tau}h_{\tau}) + \dd_\Sigma\Tr_{h_{\tau}}(\partial_{\tau}h_{\tau}) \right) = -\tfrac{1}{2} \Tr_{h_{\tau}}\big(\partial_{\tau}h_{\tau}\big)\left( \nabla^{h_{\tau} \ast} (\partial_{\tau}h_{\tau}) + \dd_\Sigma\Tr_{h_{\tau}}(\partial_{\tau}h_{\tau}) \right) \\
+\, \dd_\Sigma\bigg( s^{h_{\tau}} + \tfrac{1}{4}\Tr_{h_{\tau}}(\partial_{\tau}h_{\tau}^2) - \tfrac{1}{4}\Tr_{h_{\tau}}^2(\partial_{\tau}h_{\tau}) - \tfrac{2}{n-1}\vert \dd_\Sigma\phi_{\tau}\vert _{h_{\tau}}^2 - e^{-\tfrac{4\phi_{\tau}}{n-1}}\vert H_{b_{\tau}}\vert _{h_{\tau}}^2  - 2\lambda e^{\tfrac{2\phi_{\tau}}{n-1}} \bigg) \\
-\, 2\nabla^{{h_{\tau}} \ast}\bigg( \tfrac{1}{n-1}\dd_\Sigma\phi_{\tau} \otimes \dd_\Sigma\phi_{\tau} + \tfrac{1}{2}e^{-\tfrac{4\phi_{\tau}}{n-1}}\big( \psi_{\tau}\circ_{h_{\tau}} \psi_{\tau} + H_{b_{\tau}}\circ_{h_{\tau}} H_{b_{\tau}} \big) \bigg)  
\end{gathered}
\end{equation}

\noindent
Second, we compute the following quantity:
\begin{equation*}
\ddtau\left( \frac{2}{n-1}\, \partial_{\tau} \phi_{\tau}\, \dd_{\Sigma} \phi_{\tau} + e^{-\tfrac{4\phi_{\tau}}{n-1}}\,\psi_{\tau} \lrcorner_{h_{\tau}} H_{b_{\tau}} \right) \,.
\end{equation*}

\noindent
in two steps. On the one hand, we have:
\begin{equation*}
\begin{gathered}
\ddtau \bigg(\tfrac{1}{n-1}\partial_{\tau}\phi_{\tau}\dd_\Sigma\phi_{\tau}\bigg) = -\tfrac{1}{n-1}\left( \partial_{\tau}^2\phi_{\tau}\vert \dd_\Sigma\phi_{\tau}\vert_{h_{\tau}}^2 + \partial_{\tau}\phi_{\tau}\, \dd_\Sigma\partial_{\tau}\phi_{\tau} \right)\\
= \tfrac{1}{2}\dd_\Sigma\bigg( \tfrac{1}{n-1}\big(\partial_{\tau}\phi_{\tau}^2 + \vert \dd_\Sigma\phi_{\tau}\vert_{h_{\tau}}^2\big) + \tfrac{e^{-\tfrac{4\phi_{\tau}}{n-1}}}{2}\big(\vert\psi_{\tau}\vert_{h_{\tau}}^2 + \vert H_{b_{\tau}}\vert_{h_{\tau}}^2\big) + \lambda e^{\tfrac{2\phi_{\tau}}{n-1}} \bigg) \\
\tfrac{1}{2}\Tr_{h_{\tau}}(\partial_{\tau}h_{\tau})\left( -\tfrac{1}{n-1}\partial_{\tau}\phi_{\tau} \dd_\Sigma\phi_{\tau} \right) - \tfrac{e^{-\tfrac{4\phi_{\tau}}{n-1}}}{4}\dd_\Sigma\big(\vert \psi_{\tau} \vert _{h_{\tau}}^2+\vert H_{b_{\tau}}\vert _{h_{\tau}}^2\big) + \nabla^{h_{\tau} \ast}\left( \tfrac{1}{n-1}\dd_\Sigma\phi_{\tau}\otimes \dd_\Sigma\phi_{\tau} \right) \,,
\end{gathered}
\end{equation*}

\noindent
where we have substituted the evolution equation for $\phi_{\tau}$. On the other hand:
\begin{equation*}
\begin{gathered}
\ddtau \left( e^{-\tfrac{4\phi_{\tau}}{n-1}} \psi_{\tau} \lrcorner_{h_{\tau}} H_{b_{\tau}} \right) = \\
= - e^{-\tfrac{4\phi_{\tau}}{n-1}} \left( \tfrac{4\partial_{\tau}\phi_{\tau}}{n-1} \psi_{\tau} \lrcorner_{h_{\tau}} H_{b_{\tau}} - \partial_{\tau}h_{\tau}\Delta_1\psi_{\tau}, \lrcorner_{h_{\tau}} \dd_\Sigma\psi_{\tau} - \partial_{\tau}\psi_{\tau} \lrcorner_{h_{\tau}} H_{b_{\tau}} - \psi_{\tau} \lrcorner_{h_{\tau}} \dd_\Sigma\psi_{\tau} \right) \\
= - e^{-\tfrac{4\phi_{\tau}}{n-1}} \left\{ \tfrac{1}{2}\Tr_{h_{\tau}}(\partial_{\tau}h_{\tau}) \psi_{\tau} \lrcorner_{h_{\tau}} H_{b_{\tau}} - \big\langle \psi_{\tau} \lrcorner_{h_{\tau}} \dd_\Sigma\psi_{\tau} - \left( \delta^{h_{\tau}}H_{b_{\tau}} + \tfrac{4}{(n-1)}H_{b_{\tau}}(\dd_\Sigma\phi^{\sharp_{h_{\tau}}}) \right) \lrcorner_{h_{\tau}} H_{b_{\tau}} \right\} \\
= - e^{-\tfrac{4\phi_{\tau}}{n-1}} \left( \tfrac{1}{2}\Tr_{h_{\tau}}(\partial_{\tau}h_{\tau})\, \psi_{\tau} \lrcorner_{h_{\tau}} H_{b_{\tau}} + C_{3\tau} \lrcorner_{h_{\tau}} \psi_{\tau} - \tfrac{1}{2}\dd_\Sigma\big( \vert \psi_{\tau}\vert _{h_{\tau}}^2 + \vert H_{b_{\tau}}\vert _{h_{\tau}}^2 \big) \right) \\
+ \nabla^{h_{\tau} \ast}\left( e^{-\tfrac{4\phi_{\tau}}{n-1}}\psi_{\tau}\circ_{h_{\tau}} \psi_{\tau} + e^{-\tfrac{4\phi_{\tau}}{n-1}}H_{b_{\tau}}\circ_{h_{\tau}} H_{b_{\tau}} \right) \,,
\end{gathered}
\end{equation*}

\noindent
where we have substituted the evolution equation for $\psi_{\tau}$ and we have calculated:
\begin{equation*}
\begin{gathered}
\nabla^{h_{\tau} \ast}\left( e^{-\tfrac{4\phi_{\tau}}{n-1}}(\psi_{\tau})^2_h \right) = e^{-\tfrac{4\phi_{\tau}}{n-1}}\left(  C_{3\tau} \lrcorner_{h_{\tau}} \psi_{\tau} - \tfrac{1}{2} \dd_\Sigma\vert \psi_{\tau}\vert_{h_{\tau}}^2 + \psi_{\tau}\lrcorner_{h_{\tau}} H_{b_{\tau}} \right) \,, \\
\nabla^{h_{\tau} \ast}\left( e^{-\tfrac{4\phi_{\tau}}{n-1}}(H_{b_{\tau}})^2_{h_{\tau}} \right) = e^{-\tfrac{4\phi_{\tau}}{n-1}}\left( \left( \delta^{h_{\tau}}H_{b_{\tau}} + \tfrac{4}{(n-1)}H_{b_{\tau}}(\dd_\Sigma\phi^{\sharp_{h_{\tau}}}) \right) \lrcorner_{h_{\tau}} H_{b_{\tau}} - \tfrac{1}{2}\dd_\Sigma\vert H_{b_{\tau}}\vert_{h_{\tau}}^2 \right)\,.
\end{gathered}
\end{equation*}    

\noindent
Hence:
\begin{equation}
\label{eq:NSNSconstraint1}
\begin{gathered}
\ddtau \bigg( \tfrac{2}{n-1}\partial_{\tau}\phi_{\tau}\dd_\Sigma\phi_{\tau} + e^{-\tfrac{4\phi_{\tau}}{n-1}} \partial_{\tau}\psi_{\tau} \lrcorner_{h_{\tau}} H_{b_{\tau}} \bigg) \\
= \dd_\Sigma\bigg( \tfrac{1}{n-1}\left( \partial_{\tau}\phi_{\tau}^2 +\vert\dd_\Sigma\phi_{\tau}\vert_{h_{\tau}}^2 \right) + \tfrac{1}{2}e^{-\tfrac{4\phi_{\tau}}{n-1}}\left( \vert\psi_{\tau}\vert_{h_{\tau}}^2 + \vert H_{b_{\tau}}\vert_{h_{\tau}}^2 \right) + \lambda e^{\tfrac{2\phi_{\tau}}{n-1}} \bigg) \\
+\, \Tr_{h_{\tau}}(\partial_{\tau}h_{\tau})\left( - \tfrac{2}{n-1}\partial_{\tau}\phi_{\tau} \dd_\Sigma\phi_{\tau} - \tfrac{1}{2}\psi_{\tau} \lrcorner_{h_{\tau}} H_{b_{\tau}} \right) \\
+\, 2\nabla^{h_{\tau} \ast}\left( \tfrac{1}{n-1}\dd_\Sigma\phi_{\tau} \otimes \dd_\Sigma\phi_{\tau} + \tfrac{1}{2}e^{-\tfrac{4\phi_{\tau}}{n-1}}\left( \psi_{\tau}\circ_{h_{\tau}} \psi_{\tau} + H_{b_{\tau}}\circ_{h_{\tau}} H_{b_{\tau}} \right) \right) - e^{-\tfrac{4\phi_{\tau}}{n-1}} C_{3\tau} \lrcorner_{h_{\tau}} \psi_{\tau} \,.    
\end{gathered}
\end{equation}

\noindent
Altogether:
\begin{equation*}
\begin{gathered}
\ddtau \bigg( \nabla^{h_{\tau} \ast}(\partial_{\tau}h_{\tau}) + \dd_\Sigma\Tr_{h_{\tau}}(\partial_{\tau}h_{\tau})  + \tfrac{2}{n-1}\partial_{\tau}{\phi}_{\tau}\dd_\Sigma\phi_{\tau} + e^{-\tfrac{4\phi_{\tau}}{n-1}} \partial_{\tau}\psi_{\tau}\lrcorner_{h_{\tau}} H_{b_{\tau}} \bigg) \\
= \ddtau C_{1\tau} = - \tfrac{1}{2}\Tr_{h_{\tau}}(\partial_{\tau}h_{\tau})C_{1\tau} + \dd_\Sigma C_{2\tau} - e^{-\tfrac{4\phi_{\tau}}{n-1}} C_{3\tau}\lrcorner_{h_{\tau}} \psi_{\tau} \,,
\end{gathered}
\end{equation*}
and the result follows.
\end{proof}

\begin{prop}
\label{prop:secondconstraint}
A gradient Ricci soliton $(g , \bar{b}, \bar{\phi}) \in \Conf(\bar{\cP} , \bar{A}, \bar{Y})$ with reduction $( h_{\tau} , a_{\tau} , b_{\tau} , \phi_{\tau})$ on $(\cP, Y , A_{\tau}  , \Psi_{\tau})$ satisfies
\begin{equation*}
\ddtau C_{2\tau} = 2\delta^{h_{\tau}} C_{1\tau} - \Tr_{h_{\tau}}(\partial_{\tau}h_{\tau}) C_{2\tau} \,.
\end{equation*}
\end{prop}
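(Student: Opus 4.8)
The plan is to mimic the structure of the proof of Proposition~\ref{prop:firstconstraint}: differentiate $C_{2\tau}$ term by term with respect to $\tau$, substitute the evolution equations \eqref{eq:evolutionequations} wherever second $\tau$-derivatives of $h_\tau$, $\phi_\tau$, or first $\tau$-derivatives of $\psi_\tau$ appear, and then recognize the result as a combination of $\delta^{h_\tau}C_{1\tau}$ and $C_{2\tau}$. Concretely, $C_{2\tau}$ is built from the scalar curvature $s^{h_\tau}$, the quadratic invariants $|\partial_\tau h_\tau|^2_{h_\tau}$ and $\Tr_{h_\tau}(\partial_\tau h_\tau)^2$ of the second fundamental form, the dilaton terms $\tfrac{1}{n-1}((\partial_\tau\phi_\tau)^2 - |\dd_\Sigma\phi_\tau|^2_{h_\tau}) + \lambda e^{\frac{2\phi_\tau}{n-1}}$, and the flux terms $\tfrac12(|\psi_\tau|^2_{h_\tau} - |H_{b_\tau}|^2_{h_\tau})e^{-\frac{4\phi_\tau}{n-1}}$. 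This is nothing but the Hamiltonian constraint, so I expect this to be the time-derivative of the Gauss equation (the trace of the contracted Gauss equation) coupled to the matter stress-energy, and the identity should follow from the second Bianchi identity combined with the conservation of the matter fields.

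The key steps, in order: First, I would recall or re-derive the purely metric identity $\ddtau\big(s^{h_\tau} + \tfrac14|\partial_\tau h_\tau|^2_{h_\tau} - \tfrac14\Tr_{h_\tau}(\partial_\tau h_\tau)^2\big)$ in terms of $\Ric^g|_\Sigma$ and $\Ric^g(\partial_\tau,\partial_\tau)$; this is the analog of \cite[Proposition 6]{Koiso1981} used in Proposition~\ref{prop:firstconstraint}, and it will produce a divergence term $2\delta^{h_\tau}(\text{something involving }\Ric^g|_\Sigma)$ together with a term $-\Tr_{h_\tau}(\partial_\tau h_\tau)$ times the metric Hamiltonian density. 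Second, I would substitute the Einstein equation \eqref{eq:NSNSEinsteinframeEinstein} (restricted to $\Sigma$) to replace $\Ric^g|_\Sigma$ by the matter terms $\tfrac{1}{n-1}\dd_\Sigma\phi_\tau\otimes\dd_\Sigma\phi_\tau + \tfrac12 e^{-\frac{4\phi_\tau}{n-1}}(\psi_\tau\circ_{h_\tau}\psi_\tau + H_{b_\tau}\circ_{h_\tau} H_{b_\tau})$ minus the trace terms; the divergence of this tensor will reproduce exactly the matter contributions inside $C_{1\tau}$. Third, I would differentiate the dilaton and flux parts of $C_{2\tau}$ directly, using the evolution equation for $\phi_\tau$ (to handle $\partial_\tau^2\phi_\tau$) and for $\psi_\tau$ (to handle $\partial_\tau\psi_\tau$), and the evolution equation for $h_\tau$ only insofar as it enters $\partial_\tau(|\psi_\tau|^2_{h_\tau})$ etc. through $\partial_\tau h_\tau$. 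Finally, I would assemble all the pieces: the divergence terms should combine into $2\delta^{h_\tau}C_{1\tau}$ after one notices that $\delta^{h_\tau}$ of the dilaton-gradient term $\tfrac{2}{n-1}\partial_\tau\phi_\tau\,\dd_\Sigma\phi_\tau$ and of the flux contraction $e^{-\frac{4\phi_\tau}{n-1}}\psi_\tau\lrcorner_{h_\tau}H_{b_\tau}$ precisely match the derivatives computed in step three, and the leftover terms should all carry a factor $\Tr_{h_\tau}(\partial_\tau h_\tau)$ and sum to $-\Tr_{h_\tau}(\partial_\tau h_\tau)C_{2\tau}$; the $C_{3\tau}$ contributions, which appeared in Proposition~\ref{prop:firstconstraint}, should cancel here because they enter $C_{2\tau}$ only through $|\psi_\tau|^2_{h_\tau}$ which is insensitive to the codifferential constraint.

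The main obstacle I anticipate is bookkeeping: correctly tracking the conformal/dimensional factors $e^{\pm\frac{2k\phi_\tau}{n-1}}$ through all the Leibniz rules, and — more substantively — showing that the divergence $\nabla^{h_\tau *}$ of the matter stress tensor produced in step two combines with the explicit $\dd_\Sigma$-derivatives of the flux-norm terms to yield $2\delta^{h_\tau}$ applied to the \emph{full} $C_{1\tau}$ (including its matter pieces $\tfrac{2}{n-1}\partial_\tau\phi_\tau\,\dd_\Sigma\phi_\tau$ and $e^{-\frac{4\phi_\tau}{n-1}}\psi_\tau\lrcorner_{h_\tau}H_{b_\tau}$) rather than just its metric part. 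This is exactly the matter-coupled analog of the classical fact that the second Bianchi identity forces the momentum and Hamiltonian constraints to be propagated by the evolution, and the computations in \eqref{eq:NSNSconstraint1} of Proposition~\ref{prop:firstconstraint} — in particular the two auxiliary identities for $\nabla^{h_\tau *}(e^{-\frac{4\phi_\tau}{n-1}}(\psi_\tau)^2_{h_\tau})$ and $\nabla^{h_\tau *}(e^{-\frac{4\phi_\tau}{n-1}}(H_{b_\tau})^2_{h_\tau})$ — are precisely the lemmas I would reuse here. I would also double-check the sign of the $\delta^{h_\tau}C_{1\tau}$ term against the conventions in Lemma~\ref{lemma:hyperbolicderivatives} and Lemma~\ref{lemma:Einsteingh}, since the statement here has $+2\delta^{h_\tau}C_{1\tau}$ whereas the system quoted in Theorem~\ref{thm:CylindricalWellPosedness} has $-2\delta^{h_\tau}C_{1\tau}$, and reconciling this sign discrepancy (likely a convention for $\delta^{h_\tau}$ acting on symmetric $2$-tensors versus the formal adjoint $\nabla^{h_\tau *}$) will be the one point requiring genuine care.
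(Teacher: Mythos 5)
Your plan reproduces the paper's own proof essentially step for step: the Koiso-type identity for the time derivative of the metric part of the Hamiltonian constraint, substitution of the Einstein equation for $\Ric^g\vert_\Sigma$, direct differentiation of the dilaton and flux terms using the evolution equations for $\phi_\tau$ and $\psi_\tau$, and reassembly of the divergence terms into $2\delta^{h_\tau}C_{1\tau}$ using the same auxiliary identities for $\nabla^{h_\tau\ast}\big(e^{-\tfrac{4\phi_\tau}{n-1}}\psi_\tau\circ_{h_\tau}\psi_\tau\big)$ and $\nabla^{h_\tau\ast}\big(e^{-\tfrac{4\phi_\tau}{n-1}}H_{b_\tau}\circ_{h_\tau}H_{b_\tau}\big)$ established in Proposition \ref{prop:firstconstraint}. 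The sign discrepancy you flag between the $+2\delta^{h_\tau}C_{1\tau}$ here and the $-2\delta^{h_\tau}C_{1\tau}$ quoted in Theorem \ref{thm:CylindricalWellPosedness} is an internal inconsistency of convention in the paper rather than a gap in your argument, and it does not affect the conclusion that the constraints are propagated.
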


\begin{proof}
First, use the identity proven in \cite[Proposition 6]{Koiso1981}:
\begin{equation}
\label{eq:gausscodazziconstraint2}
\begin{gathered}
\ddtau \left( s^{h_{\tau}} - \tfrac{1}{4}\Tr_{h_{\tau}}^2(\partial_{\tau}h_{\tau}) + \dd_\Sigma\Tr_{h_{\tau}}(\partial_{\tau}h_{\tau})^2_{h_{\tau}} \right) = - \Tr_{h_{\tau}}(\partial_{\tau}h_{\tau}) \bigg( s^{h_{\tau}} - \tfrac{1}{4}\Tr_{h_{\tau}}^2(\partial_{\tau}h_{\tau}) + \tfrac{1}{4}\Tr_{h_{\tau}}(\partial_{\tau}h_{\tau})_{h_{\tau}}^2 \bigg) \\
- \delta^{h_{\tau}}\bigg( \nabla^{h_{\tau} \ast}\partial_{\tau}h_{\tau} + \dd_\Sigma\Tr_{h_{\tau}}\partial_{\tau}h_{\tau}  \bigg) + \Tr_{h_{\tau}}(\partial_{\tau}h_{\tau})\, \Tr_{h_{\tau}}\left(\Ric^g\vert_{\Sigma_0}\right) - h_{\tau}(\partial_{\tau}h_{\tau}  ,\Ric^g\vert_{\Sigma}) \,.
\end{gathered}
\end{equation}

\noindent
Substituting the expression of $\Ric^g\vert_\Sigma$ in \eqref{eq:NSNSEinsteinframeII}, one obtains:
\begin{equation}
\begin{gathered}
\label{eq:gausscodazziconstraintNSNS2}
\ddtau \left( s^{h_{\tau}} - \tfrac{1}{4}\dd_\Sigma\Tr_{h_{\tau}}^2(\partial_{\tau}h_{\tau}) + \tfrac{1}{4}\dd_\Sigma\Tr_{h_{\tau}}(\partial_{\tau}h_{\tau})_{h_{\tau}}^2 \right) \\
= - \Tr_{h_{\tau}}(\partial_{\tau}h_{\tau}) \left( s^{h_{\tau}} - \tfrac{1}{4}\dd_\Sigma\Tr_{h_{\tau}}^2(\partial_{\tau}h_{\tau}) + \tfrac{1}{4}\dd_\Sigma\Tr_{h_{\tau}}(\partial_{\tau}h_{\tau}  ^2) - \tfrac{1}{n-1}\vert\dd_\Sigma\phi_{\tau}\vert_{h_{\tau}}^2 + \tfrac{1}{2}e^{-\tfrac{4\phi_{\tau}}{n-1}}\vert H_{b_{\tau}}\vert_{h_{\tau}}^2 - \lambda e^{\tfrac{2\phi_{\tau}}{n-1}} \right) \\
- 2\delta^{h_{\tau}}\left( - \tfrac{1}{2}\dd_\Sigma\Tr_{h_{\tau}}\partial_{\tau}h_{\tau} - \tfrac{1}{2}\nabla^{h_{\tau} \ast}\partial_{\tau}h_{\tau}  \bigg) + h\bigg( \partial_{\tau}h_{\tau}  ,\tfrac{1}{n-1}\dd_\Sigma\phi_{\tau} \otimes \dd_\Sigma\phi_{\tau} + \tfrac{1}{2}e^{-\tfrac{4\phi_{\tau}}{n-1}}\left(\psi_{\tau}\circ_{h_{\tau}} \psi_{\tau} + H_{b_{\tau}}\circ_{h_{\tau}} H_{b_{\tau}}\right) \right) \,.
\end{gathered}
\end{equation}

\noindent
Second, we compute the following quantity:
\begin{equation*}
\ddtau \left( -\tfrac{1}{4(n-1)}\left( -\partial_{\tau}{\phi}_{\tau}^2 + \vert\dd_\Sigma\phi_{\tau}\vert_{h_{\tau}}^2 \right) - \tfrac{1}{2}e^{-\tfrac{4\phi_{\tau}}{n-1}}\left(- \vert\partial_{\tau}{\psi}_{\tau}\vert_{h_{\tau}}^2 + \vert H_{b_{\tau}}\vert_{h_{\tau}}^2 \right) - \lambda e^{\tfrac{2\phi_{\tau}}{n-1}} \right) \,.
\end{equation*}

\noindent
On the one hand, we have:
\begin{equation*}
\begin{gathered}
\ddtau \bigg( -\tfrac{1}{n-1}\left(- \partial_{\tau}\phi_{\tau}^2 + \vert\dd_\Sigma\phi\vert_{h_{\tau}}^2\right) \bigg) = -\tfrac{1}{n-1}\bigg( -2\partial_{\tau}^2{\phi}_{\tau}\, \partial_{\tau}\phi_{\tau} - h(\partial_{\tau}h, \dd_\Sigma\phi_{\tau}  \otimes \dd_\Sigma\phi_{\tau}  ) + 2\langle \dd_\Sigma\partial_{\tau}\phi_{\tau}, \dd_\Sigma\phi_{\tau} \rangle_{h_{\tau}} \bigg) \\
= - \tfrac{2}{n-1}e^{-\tfrac{4\phi_{\tau}}{n-1}}\partial_{\tau}\phi_{\tau}\left( \vert\psi_{\tau}\vert_{h_{\tau}}^2 + \vert H_{b_{\tau}}\vert_{h_{\tau}}^2 \right) + \frac{2}{n-1}\partial_{\tau}\phi_{\tau}\, \lambda e^{\tfrac{2\phi_{\tau}}{n-1}} - \Tr_{h_{\tau}}(\partial_{\tau}h_{\tau})\tfrac{1}{n-1}(\partial_{\tau}\phi_{\tau})^2 \\
+ 2 \delta^{h_{\tau}}\left( \tfrac{1}{n-1}\partial_{\tau}\phi_{\tau}\, \dd_\Sigma\phi_{\tau}  \right) + h_{\tau}\left( \partial_{\tau}h_{\tau},\tfrac{1}{n-1} \dd_\Sigma\phi_{\tau} \otimes \dd_\Sigma\phi_{\tau} \right) \,.    
\end{gathered}
\end{equation*}

\noindent
where we have substituted the evolution equation for $\phi_{\tau}$. On the other hand, we obtain:
\begin{equation*}
\begin{gathered}
\ddtau \left( - \tfrac{1}{2}e^{-\tfrac{4\phi_{\tau}}{n-1}} \left(-\vert\partial_{\tau}\psi_{\tau}\vert_{h_{\tau}}^2 + \vert H_{b_{\tau}} \vert_{h_{\tau}}^2\right) \right) \\
= e^{-\tfrac{4\phi_{\tau}}{n-1}}\bigg\{ \tfrac{2}{n-1}\partial_{\tau}\phi_{\tau}\left(-\vert\psi_{\tau}\vert_{h_{\tau}}^2 + \vert H_{b_{\tau}}\vert_{h_{\tau}}^2\right) + \tfrac{1}{2}\Big[ - h_{\tau}\left(\partial_{\tau}h,\psi_{\tau}\circ_{h_{\tau}} \psi_{\tau}\right) + h_{\tau}\left(\partial_{\tau}h,H_{b_{\tau}}\circ_{h_{\tau}} H_{b_{\tau}}\right) \Big] \\
- \left(-\langle \partial_{\tau}^2{\psi}_{\tau},\psi_{\tau} \rangle_{h_{\tau}} + \langle \dd_\Sigma\psi_{\tau},H_{b_{\tau}}\rangle_{h_{\tau}}\right) \bigg\} \\
= e^{-\tfrac{4\phi_{\tau}}{n-1}}\bigg\{ \tfrac{2}{n-1}\partial_{\tau}\phi_{\tau}\left(\vert\psi_{\tau}\vert_{h_{\tau}}^2 + \vert H_{b_{\tau}}\vert_{h_{\tau}}^2\right) + \tfrac{1}{2}h\left(\partial_{\tau}h_{\tau},\psi_{\tau}\circ_{h_{\tau}} \psi_{\tau} + H_{b_{\tau}}\circ_{h_{\tau}} H_{b_{\tau}} \right) - \tfrac{1}{2}\Tr_{h_{\tau}}(\partial_{\tau}h_{\tau})\, \vert\psi_{\tau}\vert_{h_{\tau}}^2 \\
+ \left\langle \delta^{h_{\tau}}H_{b_{\tau}} + \tfrac{4}{n-1}H_{b_{\tau}}(\dd_\Sigma\phi_{\tau}^{\sharp_{h_{\tau}}}),\psi_{\tau}  \right\rangle_{h_{\tau}} - \left\langle \dd_\Sigma\psi_{\tau}  ,H_{b_{\tau}} \right\rangle_{h_{\tau}} \bigg\}   
\end{gathered}
\end{equation*}

\noindent
where we have substituted the evolution equation for $\psi_{\tau}$ and we have computed:
\begin{equation*}
\begin{aligned}
\langle\psi_{\tau}, \partial_{\tau}h_{\tau} \Delta_1\psi_{\tau}\rangle_{h_{\tau}} &= - \tfrac{1}{2}h_{\tau}\left( \psi_{\tau}, \sum_{i=1}^n \partial_{\tau}h_{\tau}(e_i) \wedge \psi_{\tau}(e_i) \right) \\
&= - \tfrac{1}{2}\sum_{i,k=1}^n \partial_{\tau}h_{\tau}(e_k,e_i)\, h_{\tau}\left(\psi_{\tau}(e_k), \psi_{\tau}(e_i) \right) = - \tfrac{1}{2}h_{\tau}\left(\partial_{\tau}h_{\tau}, \psi_{\tau}\circ_{h_{\tau}} \psi_{\tau}\right)    
\end{aligned}
\end{equation*}

\noindent
and (see \ref{prop:firstconstraint}):
\begin{equation*}
2\delta^{h_{\tau}}\left( - \tfrac{1}{2}e^{-\tfrac{4\phi_{\tau}}{n-1}}\psi_{\tau} \lrcorner_{h_{\tau}} H_{b_{\tau}} \right)
= e^{-\tfrac{4\phi_{\tau}}{n-1}}\left( - \left\langle \psi_{\tau}, \delta^{h_{\tau}}H_{b_{\tau}} + \tfrac{4}{n-1}H_{b_{\tau}}(\dd_\Sigma\phi_{\tau}^{\sharp_{h_{\tau}}}) \right\rangle + \langle \dd_\Sigma\psi_{\tau},H_{b_{\tau}}\rangle \right)  
\end{equation*}

\noindent
Rearranging:
\begin{equation*}
\begin{gathered}
\ddtau \left( -\tfrac{1}{2}e^{-\tfrac{4\phi_{\tau}}{n-1}}\left( -\vert\partial_{\tau}{\psi}_{\tau}\vert_{h_{\tau}}^2 + \vert H_{b_{\tau}}\vert_{h_{\tau}}^2 \right) \right) = e^{-\tfrac{4\phi_{\tau}}{n-1}}\bigg[ \tfrac{2}{n-1}\partial_{\tau}\phi_{\tau}\left( \vert\psi_{\tau}\vert_{h_{\tau}}^2 + \vert H_{b_{\tau}}\vert_{h_{\tau}}^2 \right) \\
- h_{\tau}\left(\partial_{\tau}h_{\tau},\tfrac{1}{2}\left( \psi_{\tau}\circ_{h_{\tau}} \psi_{\tau} + H_{b_{\tau}}\circ_{h_{\tau}} H_{b_{\tau}} \right)\right) - \tfrac{1}{2}\Tr_{h_{\tau}}(\partial_{\tau}h_{\tau})\, \vert\psi_{\tau}\vert_{h_{\tau}}^2 + \delta^{h_{\tau}} (\psi_{\tau} \lrcorner_{h_{\tau}} H_{b_{\tau}}) \bigg]  
\end{gathered}
\end{equation*}

\noindent
Then:
\begin{equation}
\label{eq:NSNSconstraint2}
\begin{gathered}
\ddtau \left( -\tfrac{1}{4(n-1)}\left( -\partial_{\tau}{\phi}_{\tau}^2 + \vert\dd_\Sigma\phi_{\tau}\vert_{h_{\tau}}^2 \right) - \tfrac{1}{2}e^{-\tfrac{4\phi_{\tau}}{n-1}}\left(- \vert\partial_{\tau}{\psi}_{\tau}\vert_{h_{\tau}}^2 + \vert H_{b_{\tau}}\vert_{h_{\tau}}^2\right) - \lambda e^{\tfrac{2\phi_{\tau}}{n-1}} \right)  \\
= - \Tr_{h_{\tau}}(\partial_{\tau}h_{\tau})\bigg(\tfrac{1}{n-1}(\partial_{\tau}\phi_{\tau})^2 + \tfrac{1}{2}e^{-\tfrac{4\phi_{\tau}}{n-1}}\vert\psi_{\tau}\vert_{h_{\tau}}^2\bigg) + 2\delta^{h_{\tau}}\bigg(\tfrac{1}{n-1}\partial_{\tau}\phi_{\tau}\, \dd_\Sigma\phi_{\tau}  + \tfrac{1}{2}e^{-\tfrac{4\phi_{\tau}}{n-1}}\psi_{\tau} \lrcorner_{h_{\tau}} H_{b_{\tau}} \bigg) \\
+\, h\bigg(\partial_{\tau}h,\tfrac{1}{n-1} \dd_\Sigma\phi_{\tau}  \otimes \dd_\Sigma\phi_{\tau}  -\tfrac{1}{2}e^{-\tfrac{4\phi_{\tau}}{n-1}}\left( \psi_{\tau}\circ_{h_{\tau}} \psi_{\tau} + H_{b_{\tau}}\circ_{h_{\tau}} H_{b_{\tau}} \right)\bigg) - \frac{2}{n-1}\partial_{\tau}\phi_{\tau}\, \lambda e^{\tfrac{2\phi_{\tau}}{n-1}} \,. \nonumber
\end{gathered}
\end{equation}

\noindent
Altogether:
\begin{equation*}
\begin{gathered}
\ddtau \bigg( s^{h_{\tau}} - \tfrac{1}{4}\Tr_{h_{\tau}}^2(\partial_{\tau}h_{\tau}) + \dd_\Sigma\Tr_{h_{\tau}}(\partial_{\tau}h_{\tau})^2_{h_{\tau}} - \tfrac{1}{4(n-1)}\left( -\partial_{\tau}{\phi}_{\tau}^2 + \vert\dd_\Sigma\phi_{\tau}\vert_{h_{\tau}}^2 \right) \\
- \tfrac{1}{2}e^{-\tfrac{4\phi_{\tau}}{n-1}}\left(- \vert\partial_{\tau}{\psi}_{\tau}\vert_{h_{\tau}}^2 + \vert H_{b_{\tau}}\vert_{h_{\tau}}^2\right) - \lambda e^{\tfrac{2\phi_{\tau}}{n-1}} \bigg) = \ddtau C_{2\tau} = 2\delta^{h_{\tau}} C_{1\tau} - \Tr_{h_{\tau}}(\partial_{\tau}h_{\tau})C_{2\tau} \,,     
\end{gathered}   
\end{equation*}
and the result follows.
\end{proof}

\begin{prop}
\label{prop:thirdconstraint}
A gradient Ricci soliton $(g , \bar{b}, \bar{\phi}) \in \Conf(\bar{\cP} , \bar{A}, \bar{Y})$ with reduction $( h_{\tau} , a_{\tau} , b_{\tau} , \phi_{\tau})$ on $(\cP, Y , A_{\tau}  , \Psi_{\tau})$ satisfies
\begin{equation*}
\ddtau C_{3\tau} = - \left( \tfrac{1}{2}\Tr_{h_{\tau}}(\partial_{\tau}h_{\tau}) - \tfrac{4}{n-1}\partial_{\tau}\phi_{\tau} \right) C_{3\tau} - \partial_{\tau}h_{\tau}(C_{3\tau}^{\sharp_{h_{\tau}}}) \,.
\end{equation*}
\end{prop}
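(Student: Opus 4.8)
The plan is to differentiate $C_{3\tau}$ in $\tau$ and feed in the evolution equations \eqref{eq:evolutionequations}. The key structural observation I would use is that $C_{3\tau}$ is a \emph{dilaton-twisted codifferential}: writing $c:=\tfrac{4}{n-1}$ and setting $\tilde\psi_\tau:=e^{-c\phi_\tau}\psi_\tau$, $\tilde H_\tau:=e^{-c\phi_\tau}H_{b_\tau}$, the elementary identity $\delta^{h_\tau}(e^{-c\phi_\tau}\alpha)=e^{-c\phi_\tau}\big(\delta^{h_\tau}\alpha+c\,\alpha(\dd_\Sigma\phi^{\sharp_{h_\tau}})\big)$ gives
\[
C_{3\tau}=e^{c\phi_\tau}\,\delta^{h_\tau}\tilde\psi_\tau ,
\]
and, since $\delta^{h_\tau}\circ\delta^{h_\tau}=0$, the twisted codifferential $e^{c\phi_\tau}\delta^{h_\tau}(e^{-c\phi_\tau}\,\cdot\,)$ also squares to zero. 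Applying the same weighting to the $\psi_\tau$-equation in \eqref{eq:evolutionequations} rewrites it in the compact form $\partial_\tau\tilde\psi_\tau=\delta^{h_\tau}\tilde H_\tau+\partial_\tau h_\tau\,\Delta_1\tilde\psi_\tau-\tfrac12\Tr_{h_\tau}(\partial_\tau h_\tau)\,\tilde\psi_\tau$, in which the dilaton terms have been absorbed entirely.

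With this in hand I would compute
\[
\ddtau C_{3\tau}=c\,\partial_\tau\phi_\tau\,C_{3\tau}+e^{c\phi_\tau}\Big[(\partial_\tau\delta^{h_\tau})\tilde\psi_\tau+\delta^{h_\tau}(\partial_\tau\tilde\psi_\tau)\Big],
\]
where $\partial_\tau\delta^{h_\tau}$ denotes the first variation of the codifferential in the direction $\partial_\tau h_\tau$. Substituting the compact $\tilde\psi_\tau$-equation yields $\delta^{h_\tau}(\partial_\tau\tilde\psi_\tau)=\delta^{h_\tau}\delta^{h_\tau}\tilde H_\tau+\delta^{h_\tau}(\partial_\tau h_\tau\Delta_1\tilde\psi_\tau)-\tfrac12\delta^{h_\tau}\!\big(\Tr_{h_\tau}(\partial_\tau h_\tau)\tilde\psi_\tau\big)$, and the first term vanishes identically. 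This is the point where, in contrast with Propositions~\ref{prop:firstconstraint} and~\ref{prop:secondconstraint}, \emph{all} dependence on $H_{b_\tau}$ disappears purely from $\delta^{h_\tau}\circ\delta^{h_\tau}=0$: no Bianchi identity $\dd_\Sigma H_{b_\tau}=0$ and no dilaton evolution equation is needed. The term $-\tfrac12\Tr_{h_\tau}(\partial_\tau h_\tau)\,\delta^{h_\tau}\tilde\psi_\tau$, multiplied back by $e^{c\phi_\tau}$, already produces the $-\tfrac12\Tr_{h_\tau}(\partial_\tau h_\tau)\,C_{3\tau}$ summand, and $c\,\partial_\tau\phi_\tau\,C_{3\tau}$ is the remaining scalar contribution in the statement; expanding $\delta^{h_\tau}(f\alpha)=f\,\delta^{h_\tau}\alpha-\alpha\big((\dd_\Sigma f)^{\sharp_{h_\tau}}\big)$ peels off an extra $\tfrac12\,\tilde\psi_\tau\big((\dd_\Sigma\Tr_{h_\tau}(\partial_\tau h_\tau))^{\sharp_{h_\tau}}\big)$.

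It then remains to prove the purely Riemannian identity that, for a symmetric $2$-tensor $k$ and a $2$-form $\alpha$ on $\Sigma$,
\[
(\partial_\tau\delta^{h_\tau})\alpha\big|_{\partial_\tau h_\tau=k}+\delta^{h_\tau}\big(k\,\Delta_1\alpha\big)+\tfrac12\,\alpha\big((\dd_\Sigma\Tr_{h_\tau}k)^{\sharp_{h_\tau}}\big)=-\,k\big((\delta^{h_\tau}\alpha)^{\sharp_{h_\tau}}\big).
\]
I would verify this, in the spirit of the metric-variation computations of \cite[Proposition~6]{Koiso1981}, by inserting the first variation of the Levi-Civita connection $\partial_\tau\Gamma^{m}_{\,ij}=\tfrac12 h_\tau^{ml}(\nabla_i k_{jl}+\nabla_j k_{il}-\nabla_l k_{ij})$ together with $\partial_\tau h_\tau^{ij}=-h_\tau^{ia}h_\tau^{jb}k_{ab}$, and expanding in a local orthonormal frame: the $\nabla k$-terms should cancel between $(\partial_\tau\delta^{h_\tau})\alpha$, $\delta^{h_\tau}(k\Delta_1\alpha)$ and the $\tfrac12$-term (using, for one of them, that $k$ is symmetric while $\alpha$ is skew), and the remaining $\nabla\alpha$-terms should regroup, after a single use of the Ricci identity to commute covariant derivatives, into $-k\big((\delta^{h_\tau}\alpha)^{\sharp_{h_\tau}}\big)$. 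Feeding this back with $\alpha=\tilde\psi_\tau$, $k=\partial_\tau h_\tau$ and multiplying by $e^{c\phi_\tau}$ gives the asserted formula. The hard part will be precisely this last identity — the index bookkeeping in the first variation of the codifferential on a $2$-form, and checking that no residual curvature or $\dd_\Sigma\alpha$ term survives — which is the $C_{3\tau}$-analogue of the long computations behind Propositions~\ref{prop:firstconstraint} and~\ref{prop:secondconstraint}, and would be carried out in detail in the appendix.
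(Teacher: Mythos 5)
Your strategy is essentially the paper's own proof in a cleaner packaging: the observation that $C_{3\tau}=e^{c\phi_\tau}\delta^{h_\tau}\big(e^{-c\phi_\tau}\psi_\tau\big)$ with $c=\tfrac{4}{n-1}$, together with the square-zero property of the twisted codifferential, is exactly the identity $\big(\delta^{h_\tau}+\tfrac{4}{n-1}\dd_\Sigma\phi^{\sharp_{h_\tau}}\lrcorner_{h_\tau}\big)^2H_{b_\tau}=0$ invoked in the appendix, and your remaining ingredient is the first variation of $\delta^{h_\tau}$ in the metric, which the paper imports from \cite{BunkMunozShahbazi2023}. The twisting does buy lighter bookkeeping (the dilaton terms are absorbed once and for all), and you are right that neither $\dd_\Sigma H_{b_\tau}=0$ nor the dilaton evolution equation is needed.

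However, the pivotal identity you defer to verification is stated with the wrong sign, and that is exactly the step that would fail. Expanding $\partial_\tau\Gamma^m_{ij}=\tfrac12 h^{ml}(\nabla_i k_{jl}+\nabla_j k_{il}-\nabla_l k_{ij})$ and $\delta^{h_\tau}(k\,\Delta_1\alpha)$, all $\nabla k$- and $\nabla\alpha$-terms cancel purely algebraically (no Ricci identity is needed; no curvature terms ever appear), and one is left with
\begin{equation*}
(\partial_\tau\delta^{h_\tau})\alpha\big|_{\partial_\tau h_\tau=k}+\delta^{h_\tau}\big(k\,\Delta_1\alpha\big)+\tfrac12\,\alpha\big((\dd_\Sigma\Tr_{h_\tau}k)^{\sharp_{h_\tau}}\big)=+\,k\big((\delta^{h_\tau}\alpha)^{\sharp_{h_\tau}}\big),
\end{equation*}
not $-k\big((\delta^{h_\tau}\alpha)^{\sharp_{h_\tau}}\big)$. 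A quick sanity check: take $h_\tau=e^{2\lambda\tau}h_0$ with $h_0$ flat and $\lambda$ constant, so $k=2\lambda h_0$ at $\tau=0$; then $(\partial_\tau\delta)\alpha=-2\lambda\,\delta^{h_0}\alpha$, $k\Delta_1\alpha=4\lambda\,\alpha$, $\dd_\Sigma\Tr_{h_0}k=0$, and the left-hand side equals $+2\lambda\,\delta^{h_0}\alpha=k\big((\delta^{h_0}\alpha)^{\sharp}\big)$, contradicting your sign whenever $\delta^{h_0}\alpha\neq0$. Feeding the corrected identity into your (otherwise correct) chain gives $\ddtau C_{3\tau}=-\big(\tfrac12\Tr_{h_\tau}(\partial_\tau h_\tau)-\tfrac{4}{n-1}\partial_\tau\phi_\tau\big)C_{3\tau}+\partial_\tau h_\tau\big(C_{3\tau}^{\sharp_{h_\tau}}\big)$, i.e.~the opposite sign on the last term relative to the statement you set out to prove, but the same as the formula actually used in the proof of Theorem \ref{thm:CylindricalWellPosedness}; so once you correct the identity, your argument in fact detects what looks like a sign typo in the appendix statement. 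None of this damages the application, since the system in $(C_{1\tau},C_{2\tau},C_{3\tau})$ remains linear and homogeneous and the constraints still propagate, but as written your proof would not close on the stated formula.
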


\begin{proof}
On the one hand (see, \cite[Lemma 5.3]{BunkMunozShahbazi2023}), we have:
\begin{equation*}
\ddtau (\delta^{h_{\tau}} \psi_{\tau}) = \partial_{\tau}h_{\tau}(\delta^{h_{\tau}}\psi_{\tau}) - \tfrac{1}{2}\psi_{\tau}\left(\dd_\Sigma\Tr_{h_{\tau}}(\partial_{\tau}h_{\tau})^{\sharp_{h_{\tau}}}\right) + \delta^{h_{\tau}}(\partial_{\tau}h_{\tau}\Delta_1\psi_{\tau})  + \delta^{h_{\tau}}\partial_{\tau}\psi_{\tau}      
\end{equation*}

\noindent
On the other hand:
\begin{equation*}
\begin{gathered}
\ddtau \left(\tfrac{4}{n-1} \psi_{\tau}(\dd_\Sigma\phi_{\tau}^{\sharp_{h_{\tau}}}) \right) = \tfrac{4}{n-1}\left( -\partial_{\tau}h_{\tau}\left(\psi_{\tau}(\dd_\Sigma\phi_{\tau}^{\sharp_{h_{\tau}}})^{\sharp_{h_{\tau}}}\right) + \psi_{\tau}(\dd_\Sigma\partial_{\tau}\phi_{\tau}^{\sharp_{h_{\tau}}}) + \partial_{\tau}\psi_{\tau}(\dd_\Sigma\phi_{\tau}^{\sharp_{h_{\tau}}}) \right) 
\end{gathered}
\end{equation*}

\noindent
Therefore:
\begin{equation*}
\begin{gathered}
\ddtau \left( \delta^{h_{\tau}} \psi_{\tau} + \tfrac{4}{n-1}\partial_{\tau}\psi_{\tau}(\dd_\Sigma\phi_{\tau}^{\sharp_{h_{\tau}}}) \right) = \partial_{\tau}h_{\tau}\left( \delta^{h_{\tau}}(\psi_{\tau})^{\sharp_{h_{\tau}}} - \tfrac{4}{n-1}(\psi_{\tau}(\dd_\Sigma\phi_{\tau}^{\sharp_{h_{\tau}}})^{\sharp_{h_{\tau}}} \right) \\
+\, \psi_{\tau}\left(-\tfrac{1}{2}\dd_\Sigma\Tr_{h_{\tau}}(\partial_{\tau}h_{\tau})^{\sharp_{h_{\tau}}} + \tfrac{4}{n-1}\dd_\Sigma\partial_{\tau}\phi_{\tau}^{\sharp_{h_{\tau}}}\right) + \delta^{h_{\tau}}(\partial_{\tau}h_{\tau}\Delta_1\psi_{\tau})  + \left( \delta^{h_{\tau}}\partial_{\tau}\psi_{\tau} + \tfrac{4}{n-1}\partial_{\tau}\psi_{\tau}(\dd_\Sigma\phi_{\tau}^{\sharp_{h_{\tau}}})  \right)  
\end{gathered}
\end{equation*}

\noindent
Furthermore:
\begin{equation*}
\begin{aligned}
\delta^{h_{\tau}}\partial_{\tau}\psi_{\tau} + \tfrac{4}{n-1} \partial_{\tau}\psi_{\tau}(\dd_\Sigma\phi_{\tau}^{\sharp_{h_{\tau}}}) &= 2\tfrac{4}{n-1} \partial_{\tau}h_{\tau}\left( \psi_{\tau}(\dd_\Sigma\phi_{\tau}^{\sharp_{h_{\tau}}})^{\sharp_{h_{\tau}}} \right) + \psi_{\tau}\left( \tfrac{1}{2}\dd_\Sigma\Tr_{h_{\tau}}(\partial_{\tau}h_{\tau})^{\sharp_{h_{\tau}}} - \tfrac{4}{n-1}\dd_\Sigma\partial_{\tau}\phi_{\tau}^{\sharp_{h_{\tau}}} \right) \\
&\,\, - \delta^{h_{\tau}}(\partial_{\tau}h_{\tau}\Delta_1\psi_{\tau}) - \left[ \tfrac{1}{2}\Tr_{h_{\tau}}(\partial_{\tau}h_{\tau}) - \tfrac{4}{n-1}\partial_{\tau}\phi_{\tau} \right]C_{3\tau} 
\end{aligned}
\end{equation*}

\noindent
where we have used the evolution equation for $\psi_{\tau}$ and we have computed:
\begin{equation*}
\begin{aligned}
\delta^{h_{\tau}}(\partial_{\tau}h_{\tau}\Delta_1\psi_{\tau}) + \tfrac{4}{n-1}(\partial_{\tau}h_{\tau}\Delta_1\psi_{\tau})(\dd_\Sigma\phi_{\tau}^{\sharp_{h_{\tau}}}) &= \delta^{h_{\tau}}(\partial_{\tau}h_{\tau}\Delta_1\psi_{\tau}) - \tfrac{4}{n-1}\dd_\Sigma\phi_{\tau}^{\sharp_{h_{\tau}}}\lrcorner_{\partial_{\tau}h_{\tau}}\psi_{\tau} - \tfrac{4}{n-1}\partial_{\tau}h(\psi_{\tau}(\dd_\Sigma\phi_{\tau}^{\sharp_{h_{\tau}}})^{\sharp_{h\tau}}) \,, \\ 
\left( \tfrac{1}{2}\Tr_{h_{\tau}}(\partial_{\tau}h_{\tau}) - \tfrac{4}{n-1}\partial_{\tau}\phi_{\tau} \right)\left( \delta^{h_{\tau}}\psi_{\tau} + \tfrac{4}{n-1}\psi_{\tau}(\dd_\Sigma\phi_{\tau}^{\sharp_{h_{\tau}}}) \right) &= \left( \tfrac{1}{2}\Tr_{h_{\tau}}(\partial_{\tau}h_{\tau}) - \tfrac{4}{n-1}\partial_{\tau}\phi_{\tau} \right) C_{3\tau} \\
& \quad\, - \psi_{\tau}\left( \tfrac{1}{2}\dd_\Sigma\Tr_{h_{\tau}}(\partial_{\tau}h_{\tau})^{\sharp_{h_{\tau}}} - \tfrac{4}{n-1}\dd_\Sigma\partial_{\tau}\phi_{\tau}^{\sharp_{h_{\tau}}} \right) \,, \\
\left( \delta^{h_{\tau}} + \tfrac{4}{n-1}\dd_\Sigma\phi^{\sharp_{h_{\tau}}} \lrcorner_{h_{\tau}} \right)^2H_{b_{\tau}} &= 0 \,.    
\end{aligned}
\end{equation*}

\noindent
Altogether:
\begin{equation*}
\ddtau \left( \delta^{h_{\tau}}\psi_{\tau} + \tfrac{4}{n-1}\psi_{\tau}(\dd_\Sigma\phi_{\tau}^{\sharp_{h_{\tau}}}) \right) = - \left( \tfrac{1}{2}\Tr_{h_{\tau}}(\partial_{\tau}h_{\tau}) - \tfrac{4}{n-1}\partial_{\tau}\phi_{\tau} \right) C_{3\tau} - \partial_{\tau}h(C_{3\tau}^{\sharp_{h_{\tau}}}) \,,
\end{equation*}
and the result follows.
\end{proof}

%\newpage
%\renewcommand{\leftmark}{\MakeUppercase{Bibliography}}
\phantomsection
\bibliographystyle{JHEP}

\begin{thebibliography}{100}

\bibitem{AmmMM} B. Ammann, A. Moroianu, S. Moroianu, \emph{The Cauchy Problems for Einstein Metrics and Parallel Spinors}, Commun. Math. Phys. 320 (2013), 173--198.

\bibitem{BaezSchreiber}{J. Baez and U. Schreiber, \emph{Higher gauge theory}, in "Categories in Algebra, Geometry and Mathematical Physics", Contemporary Mathematics 431, AMS, Providence (2007).}

\bibitem{BarGM}{C. B\"ar, P. Gauduchon, A. Moroianu, {\em Generalized Cylinders in Semi-Riemannian and Spin Geometry}, Math. Z. 249 (2005), 545 -- 580.}

\bibitem{BehrendXu}{K. Behrend and P. Xu, {\em Differentiable stacks and gerbes}, Journal of Symplectic Geometry, Volume 9, Number 3, 285--341, 2011.}

\bibitem{Borstenetall}{L. Borsten, M. J. Farahani, B. Jurco, H. Kim, J. Narozny, D. Rist, C. Saemann and M. Wolf, \emph{Higher gauge theory}, Encyclopedia of Mathematical Physics, 4 (2025) 159--185.}

\bibitem{BryanWentworth}{J. Bryan, R. Wentworth, {\em The Multi-monopole Equations for K\"ahler Surfaces}, Turkish Journal of Mathematics 20 (1) (1996), 545--580.}

\bibitem{Bunk21}{S. Bunk, {\em Gerbes in geometry, field theory and quantisation}, Complex Manifolds 8 (1) (2021), 150--182.}

\bibitem{BunkMüllerSzabo}{S. Bunk, L. Müller, and R. J. Szabo, {\em Smooth 2-group extensions and symmetries of bundle gerbes}, Comm. Math. Phys. 384 (3) (2021), 1829--1911.}

\bibitem{BunkMunozShahbazi2023}{S. Bunk, V. Muñoz, C. S. Shahbazi, {\em The local moduli space of the Einstein-Yang-Mills system}, Asian Journal of Mathematics, Vol. 29, Issue 2 (2025), pp. 201--252.}

\bibitem{BunkShahbazi}{S. Bunk and C. S. Shahbazi, {\em Higher geometric structures on manifolds and the gauge theory of Deligne cohomology}, (2023) arXiv:2304:06633.}

\bibitem{Collier}{B. Collier, {\em Infinitesimal symmetries of Dixmier-Douady gerbes}, (2011) arXiv:1108.1525.}

\bibitem{DjounvounaVaughan}{D. Djounvouna and D. Krepski, {\em Infinitesimal symmetries of bundle gerbes and Courant algebroids}, Geometriae Dedicata 218, 47 (2024).}   

\bibitem{Folland}{G. B. Folland, {\em Introduction to Partial Differential Equations}, Princeton University Press, 1995, ISBN 0-691-04361-2.}

\bibitem{GarciaFernandez}{M. García-Fernández, {\em Ricci flow, Killing spinors, and T-duality in generalized geometry}, Advances in Mathematics 350 (2019) 1059--1108.}

\bibitem{GFMolinaStreets}{M. García-Fernández, R. González-Molina, and J. Streets, {\em Pluriclosed flow and the Hull-Strominger
system}, preprint \arxiv{2408.11674v1}.} 

\bibitem{GFStreetsBook}{M. García-Fernández and J. Streets, {\em Generalized Ricci Flow}, AMS University Lecture Series, 2021.}

\bibitem{Hamilton}{R. S. Hamilton, {\em Three-manifolds with positive Ricci curvature}, Journal of Differential Geometry 17 (1982), pp. 255--306.}

\bibitem{Hitchin}{N. Hitchin, {\em Brackets, forms and invariant functionals}, Asian J. Math. 10 (2006), 541--560.}

\bibitem{Karoubi}{M. Karoubi, {\em Twisted K-theory old and new}, K-theory and noncommutative geometry (2008): 117--149.}

\bibitem{Koiso1981}{N. Koiso, {\em Hypersurfaces of Einstein manifolds}, Ann. Sci. École Norm. Sup. (4) 14 (1981), 433--443.} 

\bibitem{KrepskiVaughan}{D. Krepski and J. Vaughan, {\em Multiplicative vector fields on bundle gerbes}, Differential Geometry and its Applications, Volume 84, 2022, 101931.} 

\bibitem{Meigniez2002}{G. Meigniez, \emph{Submersions, Fibrations and Bundles}, Transactions of the American Mathematical Society 354 (9) (2002), 3771--3787.}

\bibitem{Moroianu:2021kit}{A.~Moroianu, {\'A}.~Murcia and C.~S.~Shahbazi, \emph{Heterotic solitons on four-manifolds}, New York J. Math. \textbf{28} (2022), 1463--1497.}

\bibitem{Moroianu:2023jof}{A.~Moroianu, {\'A}.~J.~Murcia and C.~S.~Shahbazi, \emph{The Heterotic-Ricci Flow and Its Three-Dimensional Solitons}, J. Geom. Anal. \textbf{34} (2024) no. 5, 122}

\bibitem{Murray}{M. K. Murray, {\em Bundle gerbes}, J. Lond. Math. Soc. 54 (1996) 403--416.}

\bibitem{MurrayStevenson}{M. K. Murray and D. Stevenson, \emph{Bundle gerbes: stable isomorphism and local theory}, Journal of the London Mathematical Society 62 (3) (2000), 925--937.}

\bibitem{Nikolaus}{T. Nikolaus, {\em Equivariance in higher geometry}, Adv. Math. 226 (4) (2011), 3367--3408.}

\bibitem{OliynykSuneetaWoolgar}{T. Oliynyk, V. Suneeta, and E. Woolgar, \emph{A gradient flow for worldsheet nonlinear sigma models}, Nuclear Phys. B 739 no. 3 (2006), 441--458. MR 2214659 53, 68, 111, 112.}

\bibitem{Papadopoulos:2024tgs}{G.~Papadopoulos, \emph{Scale and conformal invariance in heterotic {\ensuremath{\sigma}}-models},
JHEP \textbf{09} (2025), 112.}

\bibitem{PodestaRaffero}{F. Podestà and A. Raffero, \emph{Three-dimensional positively curved generalized Ricci solitons with SO(3)-symmetries}, Advances in Mathematics, 479, 110426, 2025.} 

\bibitem{Polchinski}{J. G. Polchinski, \emph{String Theory}, String Theory Vol I, Cambridge University Press 1998, p. 111.}

\bibitem{Schiller}{O. Schiller, \emph{The Initial Value Problem for the Generalised Einstein Equations}, to appear.}

\bibitem{Shahbazi}{C. S. Shahbazi, \emph{Torsion parallel spinors on Lorentzian four-manifolds and supersymmetric evolution flows on bundle gerbes}, Doctoral dissertation, Mathematics Department of Hamburg University, 2025.}

\bibitem{Streets1}{J.~Streets, \emph{Classification of solitons for pluriclosed flow on complex surfaces},
Math. Ann. 375, 1555--1595 (2019).}

\bibitem{Streets2}{J.~Streets, \emph{Pluriclosed Flow and the Geometrization of Complex Surfaces}, Chen, J., Lu, P., Lu, Z., Zhang, Z. (eds) Geometric Analysis. Progress in Mathematics, vol 333.}

\bibitem{Streets:2012zz}{J.~Streets and G.~Tian, \emph{Generalized Kaehler geometry and the pluriclosed flow},
Nucl. Phys. B \textbf{858} (2012), 366--376.}

\bibitem{Tellez}{R. T\'ellez Dom\'inguez, \emph{Moduli of Higher Connections and Holomorphic 2-Bundles}, Doctoral Dissertation, UAM-ICMAT Madrid, 2025.}

\bibitem{Waldorf0}{K. Waldorf, {\em The 2-Category of Bundle Gerbes}, Diplomarbeit in Mathematik, Universität Hamburg, 2006.}

\bibitem{Waldorf}{K. Waldorf, {\em More morphisms between bundle gerbes}, Theory and Applications of Categories 18 (9), 240--273, 2007.}
\end{thebibliography}
%\bibliographystyle{plain}
%\newpage  

\end{document}